\newtheorem{theorem}{Theorem}[section]
\newtheorem{proposition}[theorem]{Proposition}
\newtheorem{lemma}[theorem]{Lemma}
\newtheorem{corollary}[theorem]{Corollary}
\newtheorem{remark}[theorem]{Remark}
\newcommand{\eps}{\varepsilon}
\newcommand{\R}{\mathbb R}
\numberwithin{equation}{section}
\title{A universal thin film model for Ginzburg-Landau energy with
  dipolar interaction}
\author{Cyrill B. Muratov\thanks {Department of Mathematical Sciences,
    New Jersey Institute of Technology, Newark, NJ 07102, USA}}
\date{}
\begin{document}

\maketitle

\begin{center}
  {\em Dedicated to V. V. Osipov on the occasion of his 77th birthday}
\end{center}

\begin{abstract}
  We present an analytical treatment of a three-dimensional
  variational model of a system that exhibits a second-order phase
  transition in the presence of dipolar interactions. Within the
  framework of Ginzburg-Landau theory, we concentrate on the case in
  which the domain occupied by the sample has the shape of a flat thin
  film and obtain a reduced two-dimensional, non-local variational
  model that describes the energetics of the system in terms of the
  order parameter averages across the film thickness. Namely, we show
  that the reduced two-dimensional model is in a certain sense
  asymptotically equivalent to the original three-dimensional model
  for small film thicknesses. Using this asymptotic equivalence, we
  analyze two different thin film limits for the full
  three-dimensional model via the methods of $\Gamma$-convergence
  applied to the reduced two-dimensional model. In the first regime,
  in which the film thickness vanishes while all other parameters
  remain fixed, we recover the local two-dimensional Ginzburg-Landau
  model. On the other hand, when the film thickness vanishes while the
  sample's lateral dimensions diverge at the right rate, we show that
  the system exhibits a transition from homogeneous to spatially
  modulated global energy minimizers. We identify a sharp threshold
  for this transition.
\end{abstract}

% \tableofcontents

\section{Introduction}
\label{sec:introduction}

This paper is concerned with the behavior of ground states in systems
exhibiting a second-order phase transition which gives rise to the
emergence of dipolar order. A prototypical example may be found in
strongly uniaxial ferromagnets, such as magnetic garnet films with
perpendicular easy axis \cite{hubert,chikazumi,malozemoff,
  landau8}. In such films, spontaneous magnetization appears below the
Curie temperature due to ferromagnetic exchange, with the magnetic
moments of the electrons aligning in the direction normal to the film
plane. However, this local ordering is frustrated by the weak
dipole-dipole coupling, which instead favors anti-parallel alignment
of distant magnetic moments. Under appropriate conditions, this
competition between the short-range attractive and long-range
repulsive interactions is well known to produce various types of
inhomogeneous spatial patterns of magnetization, often referred to as
``modulated phases'' \cite{hubert,seul95,andelman09}. Other
physical systems with similar behavior include uniaxial ferroelectrics
\cite{landau8,strukov}, ferrofluids \cite{rosensweig} and Langmuir
layers \cite{seul95,andelman87}.

Within the mean-field approximation, these types of systems are
usually modeled by an appropriate free energy functional that contains
non-local terms coming from the dipolar interaction. Spatially
modulated phases are interpreted as either local or global minimizers
of the respective energy functional. A phase diagram is then
established by comparing the energies of the candidate ``phases'' and
selecting those corresponding to the global minimum of the
energy. Mathematically, this leads to a formidable variational
problem, which has been well known to exhibit intricate dependence on
the model parameters and geometry because of its non-convex and
non-local character. In the context of micromagnetics, a whole zoo of
different behaviors have been recently established (see, e.g.,
\cite{choksi98,choksi99,kohn05arma,desimone06r,otto10,km:jns11}; this
list is certainly very far from exclusive).

The complexity of the problem may be somewhat reduced near a phase
transition point, where the energy functional attains an
asymptotically universal form coming from the Landau expansion (still
within the mean-field approximation). This is the approach taken by
\cite{garel82,roland90,seul95,m:phd,m:pre02,jagla04,andelman09}, which
is also adopted by us here.  We start by formulating the
three-dimensional Ginzburg-Landau theory of a system undergoing a
second-order phase transition, in which the order parameter is
associated with dipolar ordering (for a recent review of the general
Ginzburg-Landau formalism, see \cite{hohenberg15}; for a stochastic
perspective, see also \cite{demasi94,mourrat16}). We then derive a
reduced two-dimensional Ginzburg-Landau theory with a modified
non-local term which becomes asymptotically equivalent to the full
three-dimensional theory as the film thickness vanishes. This
reduction is done in the spirit of $\Gamma$-development
\cite{braides08} and is the main result of the paper.

Consider a region $\Omega \subset \R^3$ occupied by the material and
assume that this region is in the shape of a film of thickness
$\delta > 0$, cross-section $D \subset \R^2$ and rounded edges (a
pancake-shaped domain). Namely, we assume\footnote{Recall that
  $D + B_\delta = \mathbf \{ \mathbf r \in \R^2 : \text{dist}(\mathbf
  r, D) < \delta\}$.}
that
$D \times (0, \delta) \subset \Omega \subset (D + B_\delta) \times (0,
\delta)$
and both $D$ and $\Omega$ have boundaries of class $C^2$. Note that we
do not necessarily assume that $D$ is connected.  We are particularly
interested in the case when $\delta$ is sufficiently small,
corresponding to a {\em thin film} (how small the value of $\delta$
should be in order for a film to be considered as thin will be
discussed later).  Inside $\Omega$, the state of the material is
described by a scalar order parameter $\phi = \phi(\mathbf r)$, where
$\mathbf r = (x, y, z) \in \R^3$ stands for the spatial
coordinate. The order parameter represents the magnitude of the
magnetization or polarization vector in the $z$-direction. In the
following, we extend $\phi$ by zero outside $\Omega$. Then the
Ginzburg-Landau free energy plus the dipolar interaction energy can be
written in the following form \cite{landau8}:
\begin{align}
  \label{Fdim}
  {\mathcal F(\phi) \over k_B T_c} = \int_\Omega \left( {g \over 2}
  |\nabla \phi|^2 + {a \over 2} (T - T_c) \phi^2 + {b \over 4} \phi^4
  - h \phi \right) d^3 r + {c \over 2} \int_{\R^3} \partial_z \phi 
  (-\Delta)^{-1} \partial_z \phi \, d^3 r. 
\end{align}
Here, $k_B$ is the Boltzmann constant, $T$ is temperature, $T_c$ is
the transition temperature in the absence of the dipolar interaction,
$h = h(x, y)$ is the applied field normal to the film plane, and
$a, b, c, g$ are positive material constants. Also, the symbol
$(-\Delta)^{-1}$ stands for the convolution with the Newtonian
potential $1 / ( 4 \pi |\mathbf r|)$ in three space dimensions, and
the derivative $\partial_z \phi$ in $\R^3$ is understood
distributionally.

When $\delta$ is small, the gradient term is expected to strongly
penalize the variations of $\phi$ in the $z$-direction. Furthermore,
it is easy to see that to the leading order the dipolar term should
become local. Indeed, since for small $\delta$ we have
$\Delta \approx \partial^2 z$ in a certain sense, the energy in
\eqref{Fdim} may be equivalently rewritten as
\begin{align}
  \label{Fdim2}
  {\mathcal F(\phi) \over k_B T_c} = \int_\Omega \left( {g \over 2}
  |\nabla \phi|^2 + {a \over 2} (T - T_c^*) \phi^2 + {b \over 4} \phi^4
  - h \phi \right) d^3 r 
  + {c \over 2} \int_{\R^3} \left( \partial_z \phi 
  (-\Delta)^{-1} \partial_z \phi - \phi^2 \right) \, d^3 r,
\end{align}
where we introduced the renormalized critical temperature
$T_c^* = T_c - \frac{c}{a}$ that contains the contribution of the
dipolar interaction and rewrote the last term so that it is expected
to be $o(\delta)$ as $\delta \to 0$.  Note that in the context of
micromagnetics, such an argument was made rigorous by Gioia and James
\cite{gioia97} (see also the following sections). Furthermore,
plugging in a $z$-independent ansatz $\phi(x, y, z) = \bar\phi(x, y)$,
where $\bar\phi : D \to \R$ is sufficiently smooth (extended by zero
outside $D$), one straightforwardly obtains (here and everywhere below
we use $\mathbf r$ to denote either a point in $\R^3$ or $\R^2$,
depending on the context)
\begin{align}
  \label{Fdimbar}
  {\mathcal F(\phi) \over k_B T_c} 
  & = \delta \int_D \left( {g \over 2}
    |\nabla \bar \phi|^2 + {a \over 2} (T - T_c^*) \bar \phi^2 + {b
    \over 4} \bar \phi^4
    - h \bar \phi \right) d^2 r  + O(\delta^2) \notag \\
  & + {c \over 4 \pi} \int_{\R^2} \int_{\R^2} \left( {1 \over |\mathbf r
    - \mathbf r'|} - {1 \over \sqrt{|\mathbf r
    - \mathbf r'|^2 + \delta^2 } } -  2 \pi \delta^{(2)}(\mathbf r
    - \mathbf r') \, \delta \right) \bar \phi(\mathbf r) \bar \phi
    (\mathbf r') \, d^2 r \, d^2 r',
\end{align}
where $\delta^{(2)}(\mathbf r)$ is the two-dimensional Dirac
delta-function. Formally expanding the integrand in the last term in
\eqref{Fdimbar} in the powers of $\delta$, one can then see that to
the leading order the kernel becomes
$\delta^2 / (8 \pi |\mathbf r - \mathbf r'|^3)$. In the physics
literature, this approximation is often adopted to arrive at a leading
order asymptotic theory for thin films with dipolar interactions, with
the $1/r^3$ kernel representing the dipole-dipole repulsion (as is
done, e.g., in the review \cite{andelman09}). This, however, is
incorrect, since the $1/r^3$ kernel is too singular in two dimensions,
and thus the resulting double integral does not make sense. A more
sound approach mathematically is to go to Fourier space, perform an
expansion there and then invert the transform. This leads to the
following formula:
\begin{align}
  \label{Fdimbar0}
  {\mathcal F(\phi) \over k_B T_c \delta} 
  \approx \int_D \left( {g \over 2}
  |\nabla \bar \phi|^2 + {a \over 2} (T - T_c^*) \bar \phi^2 + {b
  \over 4} \bar \phi^4
  - h \bar \phi \right) d^2 r  \notag \\
  - {c \delta \over 16 \pi} \int_{\R^2} \int_{\R^2}  {(\bar\phi(\mathbf
  r) - \bar\phi(\mathbf r'))^2 \over |\mathbf r - \mathbf r'|^3} \,
  d^2 r \, d^2 r'.
\end{align}
Contrary to the previous case, the last integral in the right-hand
side of \eqref{Fdimbar0} is well defined, at least for smooth
functions vanishing on $\partial D$. Moreover, since this term can be
interpreted, up to a constant factor, as the homogeneous
$H^{1/2}$-norm squared of $\bar\phi$ (see, e.g., \cite{dinezza12}),
one can write \eqref{Fdimbar0} as \cite{andelman87,jagla04}
\begin{align}
  \label{Fdimbar00}
  {\mathcal F(\phi) \over k_B T_c \delta} 
  \approx \int_D \left( {g \over 2}
  |\nabla \bar \phi|^2 + {a \over 2} (T - T_c^*) \bar \phi^2 + {b
  \over 4} \bar \phi^4
  - h \bar \phi \right) d^2 r  
  - {c \delta \over 4} \int_{\R^2} \bar\phi (-\Delta)^{1/2} \bar\phi
  \, d^2 r,
\end{align}
where the half-Laplacian operator $ (-\Delta)^{1/2}$ is understood as
a map whose Fourier symbol is $|k|$, or, equivalently, as an integral
operator whose action on smooth functions with compact support is
given by \cite{dinezza12}
\begin{align}
  \label{halflapl}
  (-\Delta)^{1/2} \bar\phi(\mathbf r) = {1 \over 4 \pi} \int_{\R^2}
  {2 \bar\phi(\mathbf r) - \bar\phi(\mathbf r - \mathbf z) -
  \bar\phi(\mathbf r + \mathbf z) \over |\mathbf z|^3} \, d^2 z \qquad
  \mathbf r \in \R^2.  
\end{align}
In particular, since $D$ is assumed to be bounded, we must necessarily
have $\bar\phi \in H^1(D)$ in order for the right-hand side of
\eqref{Fdimbar0} to be less than $+\infty$. If also
$\bar\phi \in H^1_0(D)$, then by interpolation the energy is bounded
below and is thus well defined \cite{lieb-loss}. Yet, there is still
an issue with the expression for the energy in \eqref{Fdimbar0}, which
becomes negative infinity as soon as $\bar \phi$ does not vanish at
the boundary of $D$. This issue is quite severe and exists even for
$\bar \phi = const$ in $D$. The reason for the latter is that the
energy in \eqref{Fdimbar0} fails to capture a reduced local
contribution of the dipoles near the boundary, since only half of the
neighbors are present at $\partial D$. In the following, we fix this
issue by introducing a smooth cutoff near the boundary of $D$ in
computing the last term in the right-hand side of
\eqref{Fdimbar0}. This allows us to estimate, under appropriate
assumptions, the original energy from \eqref{Fdim} from below by a
reduced energy similar to the one in \eqref{Fdimbar0} evaluated on the
average of the order parameter in the $z$-direction, with the relative
error controlled only by $\delta$ (for precise statements, see the
following section). Since the latter energy is also a good
approximation to the original energy for $z$-independent
configurations, this then allows us to make a number of conclusions
regarding the energy minimizers of the full energy in \eqref{Fdim}
defined on three-dimensional configurations. Thus, understanding the
behavior of the energy minimizers for \eqref{Fdim} can be achieved by
looking at a somewhat simpler energy of the type in \eqref{Fdimbar0},
which, nevertheless, retains most of the complexity of the former.

To summarize, in this paper we show that the energy in
\eqref{Fdimbar0} is in a certain sense asymptotically equivalent to
the energy in \eqref{Fdim} {\em without} assuming that the order
parameter does not vary in the $z$-direction. Instead, we show that
the energy in \eqref{Fdimbar0} correctly describes the energetics of
the low-energy three-dimensional order parameter configurations in
terms of their $z$-averages. More precisely, under some technical
assumptions the energy in \eqref{Fdimbar0} evaluated on the
$z$-average of the order parameter gives an asymptotically accurate
lower bound for the full energy in \eqref{Fdim} evaluated on the
three-dimensional order parameter configuration. On the other hand,
extending a two-dimensional order parameter configuration to a
three-dimensional $z$-independent configuration, one gets a value of
the full energy in \eqref{Fdim} that is asymptotically bounded above
by the value of the reduced energy in \eqref{Fdimbar0} evaluated on
the two-dimensional configuration. We note that the first result in
that direction was obtained by Kohn and Slastikov in the context of
micromagnetics, see \cite[Lemma 3]{kohn05arma}. Our analysis differs
from that in \cite{kohn05arma} in that it identifies the first two
non-trivial leading order terms in the expansion of the dipolar energy
in $\delta$ and provides sharp universal estimates for the remainder.

The main result of this paper on the asymptotic equivalence of the two
energies is presented in Theorem \ref{t:main}. This theorem relies on
key Lemma \ref{l:ded}, which establishes matching upper and lower
bounds for the dipolar energy of three-dimensional order parameter
configurations in terms of a non-local energy functional evaluated on
the $z$-averages in the plane, with the error controlled by the
Dirichlet energy with a vanishingly small coefficient as the film
thickness becomes small. This produces errors that can be controlled
by the $L^\infty$ norm of the order parameter, apart from some
possible additional contributions near the film edge in the upper
bounds. Notice that boundedness of the $L^\infty$ norm of both the
three- and two-dimensional energy minimizing order parameter
configurations is a reasonable assumption in view of the regularity of
minimizers established in Propositions \ref{p:exist3d} and
\ref{p:exist2d}. We also note that a uniform $L^\infty$ bound by the
equilibrium value of the order parameter is a fairly standard
assumption for the ansatz-based computations in the physics literature
and is a property which is also observed in some numerical simulations
(see, e.g., \cite{garel82,roland90,kaplan93,ng95,jagla04}).

With the reduced energy identified, we proceed to analyze two thin
film regimes. In the first regime, only the film thickness is sent to
zero, with all the other parameters as well as the film cross-section
fixed. In the context of micromagnetics, such a result was first
obtained by Gioia and James in \cite{gioia97}. Here under a uniform
$L^\infty$ bound this type of result follows immediately from Theorem
\ref{t:main}. Still, we are able to relax the $L^\infty$ constraint
and prove the result in the full generality by establishing
$\Gamma$-convergence of the full energy to the {\em local} energy
evaluated on the $z$-averages, see Theorem \ref{t:gioia}. Here the
proof requires a different treatment of the non-local contributions to
the energy near the film edge.

Finally, we consider a regime in which simultaneously the film
thickness goes to zero, while the film's lateral dimension goes to
infinity with a suitable rate that is exponential in the film
thickness. We note that these types of scalings were previously
discussed in the physics literature \cite{kaplan93,ng95} and have been
recently treated by Kn\"upfer, Muratov and Nolte within the framework
of micromagnetics \cite{kmn17}. In this regime, after a rescaling that
fixes the domain in the plane we prove a $\Gamma$-convergence result
for the reduced energy in Theorem \ref{t:nolte}. Together with Theorem
\ref{t:main}, this result then gives asymptotic non-existence of
non-trivial minimizers of the full energy, under a uniform $L^\infty$
bound and a technical assumption that the sample is maintained in a
single phase near the edge. We further identify a critical value of
the rescaled film thickness above which pattern formation occurs, see
Corollaries \ref{c:nolteno} and \ref{c:noltesi}. The proof relies on
the standard Modica-Mortola trick \cite{modica87} and an interpolation
Lemma \ref{l:interp} similar to the one obtained in the context of
thin film micromagnetics \cite{desimone06}, and follows closely the
arguments that lead to Theorem 3.5 in our companion paper
\cite{kmn17}. Note that combining Theorem \ref{t:nolte} with Theorem
\ref{t:main} yields an analog of Theorem 3.1 in \cite{kmn17}. A novel
aspect of Theorem \ref{t:nolte} is the consideration of the energy
contribution from the non-local term near the sample edge.

Our paper is organized as follows. In Sec. \ref{sec:main}, we present
the main results of the paper. In Sec. \ref{sec:preliminaries}, a
number of auxiliary results is obtained that are used throughout the
proofs. Here we also derive the Euler-Lagrange equations associated
with minimizers of the full and the reduced energies, see Propositions
\ref{p:exist3d} and \ref{p:exist2d}. Then, in Sec. \ref{sec:proof1} we
give the proof of Theorem \ref{t:main} and in
Sec. \ref{sec:proof-theor-reft:g}, we give the proof of Theorem
\ref{t:gioia}. Finally, in Sec. \ref{sec:rest-proof} we present the
proof of Theorem \ref{t:nolte} and Corollary \ref{c:nolteno}.

\section{Main results}
\label{sec:main}

We now turn to our main results. We start by carrying out a suitable
non-dimensionalization for the energy in \eqref{Fdim}. To that end, we
use instead the representation in \eqref{Fdim2} and choose the units
of length, $\phi$ and the energy in such a way that
$k_B T_c = a (T_c^* - T) = b = g = 1$, treating the most interesting
case $T < T_c^*$. Also, to simplify the presentation we set $h = 0$
throughout the rest of the paper. The external field $h$ can be
trivially added back in all the results below. 

Denoting the dimensionless dipolar strength by $\gamma > 0$, we write
the rescaled version of the energy in \eqref{Fdim2}, up to an additive
constant, as
\begin{align}
  \label{E}
  \mathcal E(\phi) := \int_\Omega \left( \frac12 |\nabla \phi|^2 +
  \frac14 \left( 1 -  \phi^2 \right)^2 \right) d^3 r + {\gamma \over
  2} \int_{\R^3} \left( \partial_z \phi  (-\Delta)^{-1} \partial_z
  \phi - \phi^2 \right) d^3 r,
\end{align}
where $\phi \in H^1(\Omega)$, extended by zero to
$\R^3 \backslash \Omega$. The energy $\mathcal E$ in \eqref{E} thus
depends on only two dimensionless parameters, $\delta$ and $\gamma$,
as well as on the domain $D$, whose diameter may have a relationship
with these two parameters when considering various asymptotic
regimes. The unit of length above is chosen so that the characteristic
length scale of variation of $\phi$ in the absence of the dipolar
interaction is of order unity. Therefore, the thin film regime that we
are interested in should correspond to $\delta \lesssim 1$. Note that
in terms of the original, dimensional variables, we have
\begin{align}
  \label{gamma}
  \gamma = {c \over a (T_c^* - T)}.
\end{align}
In the context of ferromagnetism, the parameter $\gamma$ may be both
small and large, depending on how close the value of $T$ is to
$T_c^*$.  Indeed, since the stray field interaction is a relativistic
effect in comparison with the exchange interaction driving the phase
transition, it should be considerably weaker than the latter away from
the critical temperature \cite{chikazumi}. At the same time, as $T$
approaches $T_c^*$, the value of $\gamma$ diverges.

We next introduce a cutoff function
$\chi_\delta \in C^\infty_c (\R^2)$.  Namely, we define
$\eta : \R \to [0,1]$ such that $\eta \in C^\infty(\R)$, $\eta(t) = 0$
for all $t \leq 1$, $\eta(t) = 1$ for all $t \geq 2$ and
$0 \leq \eta'(t) \leq 2$ for all $t \in \R$. We then define
$\chi_\delta(\mathbf r) = \eta( \delta^{-1} \text{dist}(\mathbf r,
\R^2 \backslash D))$.  We also define
\begin{align}
  \label{Omd}
  D_\delta := \left\{ \mathbf r \in D : \text{dist}(\mathbf r, \partial
  D) > \delta \right\} \qquad \text{and} \qquad \Omega_\delta :=
  D_\delta \times (0, \delta),
\end{align}
and note that $\overline D_\delta = \text{supp} (\chi_\delta)$.
Finally, with a slight abuse of notation we will also treat
$\chi_\delta$ as a $z$-independent function of all three coordinates,
depending on the context.

We now define the following {\em reduced energy} for
$\bar\phi \in H^1(D)$ and $\alpha > 0$:
\begin{align}
  \label{EE}
  E(\bar\phi) := \int_D \left( \frac12 \left( 1 - \alpha
  \delta^2 \right) |\nabla
  \bar\phi|^2 + \frac14 \left( 1 - \bar\phi^2 \right)^2 \right) d^2 r
  \qquad \qquad \notag \\ 
  - {\gamma \delta \over 16 \pi} \int_{\R^2} \int_{\R^2}
  {(\chi_\delta(\mathbf 
  r) \bar\phi(\mathbf r) - \chi_\delta(\mathbf
  r') \bar\phi(\mathbf 
  r'))^2 \over |\mathbf r - \mathbf r'|^3} \, d^2 r \, d^2 r'. 
\end{align}
This definition makes sense, because we have
$\chi_\delta \bar\phi \in H^1(\R^2)$ and, hence, by interpolation the
last term in \eqref{EE} is well-defined \cite{lieb-loss}. What we will
show below is that if
\begin{align}
  \label{av}
  \bar\phi(x, y) = {1 \over \delta} \int_0^\delta \phi(x, y, z) \, dz
  \qquad (x, y) \in D,
\end{align}
then with a suitable explicit choice of $\alpha$ the value of
$E(\bar\phi) \delta$ may be used to bound from below the value of
$\mathcal E(\phi)$, up to a small error in $\delta$. Conversely, the
value of $E(\bar\phi) \delta$ provides a good approximation for the
value of $\mathcal E(\phi)$, with a small relative error, when $\phi$
is chosen to be independent of $z$. We make this statement precise in
the following theorem.

\begin{theorem}
  \label{t:main}
  There exist universal constants $\alpha_1 > 0$, $\alpha_2 > 0$ and
  $\beta > 0$ such that for every $\delta > 0$ sufficiently small
  there holds:
  \begin{enumerate}[(i)]
  \item If $\phi \in H^1(\Omega) \cap L^\infty(\Omega)$ and $\bar\phi$
    is defined by \eqref{av}, then
    \begin{align}
      \label{EvsEElower}
      \mathcal E(\phi) \geq E(\bar\phi) \delta  - \beta \gamma
      \delta^2 \| \phi \|_{L^\infty(\Omega)}^2 |\partial D|,
    \end{align}
    with $\alpha = \alpha_1 + \gamma \alpha_2$.
  \item For every $\bar\phi \in H^1(D) \cap L^\infty(D)$ there exists
    $\phi \in H^1(\Omega) \cap L^\infty(\Omega)$ such that
    $\| \phi \|_{L^\infty(\Omega)} \leq \| \bar\phi \|_{L^\infty(D)}$,
    $\phi(x, y, z) = \bar \phi(x, y)$ for all $(x, y) \in D$, and
    \begin{align}
      \label{EvsEEupper}
      \mathcal E(\phi) 
      & \leq (1 - 2 \alpha \delta^2)^{-1} E(\bar\phi)
        \delta \notag \\
      & + \beta  \delta^2 (1 + \gamma^2) \left( 1 + \| \bar \phi
        \|_{L^\infty(\Omega)}^4 \right) |\partial D| +  \beta \delta \|
        \nabla \bar \phi \|_{H^1(D \backslash D_\delta)}^2 .  
    \end{align}
  \end{enumerate}
\end{theorem}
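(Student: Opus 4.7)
The plan is to split $\mathcal E(\phi)$ into its local Ginzburg-Landau part and its nonlocal dipolar part, treat each by fibrewise one-dimensional analysis, and invoke Lemma \ref{l:ded} as a black box for the nonlocal piece. The main analytic tool is the one-dimensional Poincaré inequality on each vertical fibre,
\[
\int_0^\delta \bigl(\phi(x,y,z)-\bar\phi(x,y)\bigr)^2\,dz \leq C\delta^2 \int_0^\delta \bigl(\partial_z\phi(x,y,z)\bigr)^2\,dz,
\]
combined with the orthogonal split $|\nabla\phi|^2=|\nabla_{xy}\phi|^2+(\partial_z\phi)^2$ and Jensen's inequality.

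For part (i), Jensen yields $\int_0^\delta|\nabla_{xy}\phi|^2\,dz\geq\delta|\nabla\bar\phi|^2$ and $\int_0^\delta\phi^4\,dz\geq\delta\bar\phi^4$, so the gradient and quartic pieces descend from $\phi$ to $\bar\phi$ without loss (the residual cubic piece in the expansion of $\phi^4=(\bar\phi+(\phi-\bar\phi))^4$ is absorbed by AM-GM together with Poincaré). The quadratic term is handled by the exact identity
\[
\int_0^\delta\phi^2\,dz = \delta\bar\phi^2 + \int_0^\delta(\phi-\bar\phi)^2\,dz,
\]
which with Poincaré gives $-\int_\Omega\phi^2\geq-\delta\int_D\bar\phi^2-C\delta^2\int_\Omega(\partial_z\phi)^2$, and the vertical-gradient error is absorbed by the $\tfrac12\int_\Omega(\partial_z\phi)^2$ already present in the full Dirichlet energy. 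For the nonlocal combination $\int\partial_z\phi(-\Delta)^{-1}\partial_z\phi-\int\phi^2$ appearing in \eqref{E} I invoke the lower-bound half of Lemma \ref{l:ded}, which produces the $H^{1/2}$ double integral of $\chi_\delta\bar\phi$ from \eqref{EE} (with prefactor $\tfrac{\delta^2}{8\pi}$) minus a Dirichlet error of the form $(\alpha_1+\gamma\alpha_2)\delta^3\int_D|\nabla\bar\phi|^2$; this is precisely absorbed by the slack $(1-\alpha\delta^2)$ with $\alpha=\alpha_1+\gamma\alpha_2$ in front of $|\nabla\bar\phi|^2$ in $E$, since Jensen already delivers the full $\delta\int_D|\nabla\bar\phi|^2$ unconditionally. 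Finally, the replacement of $\bar\phi$ by $\chi_\delta\bar\phi$ inside the half-Laplacian alters the integrand only on the boundary strip $D\setminus D_\delta$ of area $O(\delta|\partial D|)$, and standard $H^{1/2}$ estimates give the boundary error $\beta\gamma\delta^2\|\phi\|_{L^\infty}^2|\partial D|$.

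For part (ii), I construct $\phi$ to be $z$-independent on $\Omega$: equal to $\bar\phi$ on $D$, and on the rounded edge $\Omega\setminus D\times(0,\delta)$ equal to a bounded $H^1(\R^2)$ extension of $\bar\phi$ (obtained, for instance, by horizontal reflection across $\partial D$ followed by multiplication by a smooth cutoff supported in $D+B_\delta$), guaranteeing $\|\phi\|_{L^\infty(\Omega)}\leq\|\bar\phi\|_{L^\infty(D)}$. On $D\times(0,\delta)$ the local contributions equal exactly $\delta$ times the corresponding terms of $E(\bar\phi)$ with $\alpha=0$, while on the edge region, whose cross-sectional area is $O(\delta|\partial D|)$, they contribute at most $\beta\delta\|\nabla\bar\phi\|_{H^1(D\setminus D_\delta)}^2$ and $\beta\delta^2(1+\|\bar\phi\|_{L^\infty}^4)|\partial D|$. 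The nonlocal term is handled by the upper-bound half of Lemma \ref{l:ded}, which produces the $H^{1/2}$ double integral plus a symmetric Dirichlet correction $(\alpha_1+\gamma\alpha_2)\delta^3\int_D|\nabla\bar\phi|^2$; moving this correction to the left-hand side of \eqref{EvsEEupper} yields the prefactor $(1-2\alpha\delta^2)^{-1}$, the factor of two accounting for the sign change relative to the lower-bound direction.

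The main obstacle is Lemma \ref{l:ded} itself: producing matching upper and lower bounds for $\int\partial_z\phi(-\Delta)^{-1}\partial_z\phi-\int\phi^2$ with error controlled solely by the Dirichlet energy of the $z$-average and with the correct universal prefactor $\tfrac{\delta^2}{8\pi}$ in front of the $H^{1/2}$ integral requires a careful Fourier-space expansion of the slab dipolar kernel and a sign-definite remainder estimate, followed by a physical-space translation that introduces the cutoff $\chi_\delta$ and its boundary-layer errors. Once that lemma is in hand, the proof of Theorem \ref{t:main} reduces to the bookkeeping sketched above.
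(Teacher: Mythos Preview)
Your overall plan matches the paper's: split $\mathcal E$ into local and nonlocal parts, handle the local part by Jensen and Poincar\'e on vertical fibres, and invoke Lemma~\ref{l:ded} for the nonlocal part. However, there is a genuine gap in part (i), in how you absorb the error coming from Lemma~\ref{l:ded}.

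The error in Lemma~\ref{l:ded} is $\tfrac{\delta^2}{2}\int_\Omega |\nabla(\chi_\delta\phi)|^2\,d^3r$, the \emph{full three-dimensional} Dirichlet energy of $\chi_\delta\phi$, not a quantity of the form $\delta^3\int_D|\nabla\bar\phi|^2\,d^2r$ as you write. After multiplication by $\gamma/2$ this becomes, up to edge terms, $\tfrac{\gamma\delta^2}{4}\int_\Omega|\nabla\phi|^2\,d^3r$. You propose to absorb it by the two-dimensional slack $\tfrac{\alpha\delta^3}{2}\int_D|\nabla\bar\phi|^2$ created by the coefficient $(1-\alpha\delta^2)$ in $E$, but this fails: Jensen gives $\int_\Omega|\nabla_{xy}\phi|^2 \geq \delta\int_D|\nabla\bar\phi|^2$, the inequality pointing the wrong way. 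A $\phi$ with large horizontal oscillations whose $z$-average vanishes makes the three-dimensional error arbitrarily large while $\int_D|\nabla\bar\phi|^2$ stays small. The paper's fix is to reserve the fraction $\tfrac{\alpha_2\gamma\delta^2}{2}\int_\Omega|\nabla\phi|^2$ of the \emph{three-dimensional} Dirichlet energy \emph{before} applying Jensen; this reserved piece cancels the Lemma~\ref{l:ded} error directly (with $\alpha_2=1$), and Jensen on the remaining $(1-\alpha_2\gamma\delta^2)$ share of the horizontal gradient then produces the coefficient in $E$. The order of operations matters.

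A second, smaller point: the cutoff $\chi_\delta$ enters at the three-dimensional level. Lemma~\ref{l:ded} is stated for $\chi_\delta\phi$, so one first needs Lemma~\ref{l:edge} to bound $|\mathcal E_d(\phi)-\mathcal E_d(\chi_\delta\phi)|$ by $C\delta^2\|\phi\|_{L^\infty}^2|\partial D|$; this, together with $\int_\Omega(\phi^2-\chi_\delta^2\phi^2)$ on the strip $\Omega\setminus\Omega_{2\delta}$, is the true source of the boundary error in \eqref{EvsEElower}, not an $H^{1/2}$ estimate on the substitution $\bar\phi\mapsto\chi_\delta\bar\phi$ after the fact.

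Your part (ii) is essentially correct: since the constructed $\phi$ is $z$-independent, the three-dimensional gradient error from Lemma~\ref{l:ded} does reduce to $\delta$ times a two-dimensional Dirichlet integral, and your bookkeeping there is sound.
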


Note that for $\gamma \lesssim 1$ and
$\| \phi \|_{L^\infty(\Omega)} \lesssim 1$ the additive error term
appearing in both the upper and the lower bound in Theorem
\ref{t:main} is of the order of the dipolar self-interaction energy of
$\phi$ at the sample edge $\Omega \backslash \Omega_\delta$.  Thus,
the asymptotic equivalence of $\mathcal E$ and $E$ established in
Theorem \ref{t:main} holds when $|\mathcal E(\phi)| \gg \delta^2$,
when the bulk contribution to the energy dominates that of the
edge. Note that in this case the non-local term in $E$ is expected to
capture the leading $O(\delta^2 |\log \delta|)$ contribution to
$\mathcal E$ from the film edge. Hence, the additive error term
appearing in Theorem \ref{t:main} should still be negligible even when
the edge effects are prominent. We point out that a smooth cutoff near
the sample edge was recently used to model boundary effects in
computational micromagnetic studies of ultrathin ferromagnetic films,
a closely related problem \cite{mov:jap15}.

We now show how Theorem \ref{t:main} may be used to establish some of
the asymptotic properties of the energy minimizing configurations for
the original energy $\mathcal E$ as $\delta \to 0$ by studying the
reduced energy $E$. We begin by establishing a result similar to that
of Gioia and James for a closely related vectorial model of
micromagnetics in the thin film limit \cite{gioia97}. Namely, we
consider the simplest thin film regime, in which $\delta \to 0$ with
both $\gamma$ and $D$ fixed. In this regime, we show that the
energetics of the low energy configurations in the original
three-dimensional model can be asymptotically described via the local
two-dimensional energy. The proof for uniformly bounded sequences
follows by combining the result in Theorem \ref{t:main} with the
$\delta \to 0$ limit behavior of $E$ established in Proposition
\ref{p:EdE0}. A slight modification of the proof of Theorem
\ref{t:main} in this regime allows to remove the assumption of
boundedness, so below we state the result in its full generality.

For fixed $D$, consider a family of bounded open sets
$\Omega^\delta \subset \R^3$ such that
$D \times (0, \delta) \subset \Omega^\delta \subset (D + B_\delta)
\times (0, \delta)$.
Given $\phi_\delta \in H^1(\Omega^\delta)$, we define
$\bar\phi_\delta$ to be its $z$-average on $D$, i.e.,
$\bar\phi_\delta \in H^1(D)$ is defined by \eqref{av} with $\phi$
replaced by $\phi_\delta$.  We next define $\mathcal E_\delta$ to be
the family of functionals given by \eqref{E} with
$\Omega = \Omega^\delta$. We also define $E_0$ to be given by
\eqref{EE} with $\delta$ formally set to zero, i.e., we define
\begin{align}
  \label{E0}
  E_0(\bar\phi) := 
  \begin{cases}
    \displaystyle\int_D \left( \frac12 |\nabla \bar\phi|^2 + \frac14
      \left( 1 - \bar\phi^2 \right)^2 \right) d^2 r & \bar\phi \in
    H^1(D),
    \\
    +\infty & \text{otherwise}.
  \end{cases}
\end{align}
Then the following $\Gamma$-convergence result holds true (for a
general introduction to $\Gamma$-convergence, see, e.g.,
\cite{braides}).

\begin{theorem}
  \label{t:gioia}
  As $\delta \to 0$, we have
  \begin{align}
    \delta^{-1} \mathcal E_\delta \stackrel{\Gamma}\to E_0,
  \end{align}
  with respect to the $L^2$ convergence of the $z$-averages, in the
  following sense:
  \begin{enumerate}[(i)]
  \item For any sequence of $\delta \to 0$ and
    $\phi_\delta \in H^1(\Omega^\delta)$ such that
    $\| \nabla \phi_\delta \|_{L^2(\Omega)}^2 \leq C \delta$ for some
    $C > 0$ independent of $\delta$,
    $\bar\phi_\delta \rightharpoonup \bar\phi$ in $H^1(D)$ and
    $\bar\phi_\delta \to \bar\phi$ in $L^2(D)$, we have
  \begin{align}
    \label{liminfEg}
    \liminf_{\delta \to 0} \delta^{-1} \mathcal E_\delta(\phi_\delta)
    \geq E_0(\bar\phi). 
  \end{align}
\item For any $\bar\phi \in H^1(D)$ and every sequence of
  $\delta \to 0$, there exists $\phi_\delta \in H^1(\Omega^\delta)$
  such that $\| \nabla \phi_\delta \|_{L^2(\Omega)}^2 \leq C \delta$
  for some $C > 0$ independent of $\delta$,
  $\bar\phi_\delta \to \bar\phi$ in $L^2(D)$ and
  \begin{align}
      \label{limsupEg}
    \limsup_{\delta \to 0} \delta^{-1} \mathcal E_\delta(\phi_\delta)
    \leq E_0(\bar\phi).
  \end{align}
  \end{enumerate}
\end{theorem}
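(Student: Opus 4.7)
The plan is to combine the asymptotic equivalence of Theorem~\ref{t:main} with the two-dimensional reduction $E \stackrel{\Gamma}{\to} E_0$ provided by Proposition~\ref{p:EdE0}. Under a uniform $L^\infty$ bound on $\phi_\delta$ both halves of the $\Gamma$-convergence follow essentially word by word; the substance of the theorem is to retire that hypothesis, which as the authors note requires a ``slight modification'' of the proof of Theorem~\ref{t:main}(i).

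For part (i), assuming first $\|\phi_\delta\|_{L^\infty(\Omega^\delta)} \le M$, Theorem~\ref{t:main}(i) gives
\[
  \delta^{-1}\mathcal E_\delta(\phi_\delta) \;\ge\; E(\bar\phi_\delta) - \beta\gamma\delta M^2|\partial D|,
\]
so that the error is $O(\delta)$ and the liminf half of Proposition~\ref{p:EdE0}, combined with $\bar\phi_\delta \rightharpoonup \bar\phi$ in $H^1(D)$, yields $\liminf E(\bar\phi_\delta) \ge E_0(\bar\phi)$. To retire the $L^\infty$ assumption I would pass to the rescaled family $\tilde\phi_\delta(x,y,z') := \phi_\delta(x,y,\delta z')$ on the unit-thickness slab. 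The hypothesis $\|\nabla\phi_\delta\|_{L^2}^2 \le C\delta$ becomes $\|\nabla_{xy}\tilde\phi_\delta\|_{L^2}^2 \le C$ together with $\|\partial_{z'}\tilde\phi_\delta\|_{L^2}^2 \le C\delta^2$, while the potential term supplies an $L^4$ bound for free. Rellich's theorem then forces $\tilde\phi_\delta \to \bar\phi$ strongly in $L^4$ with a $z'$-independent limit, so weak lower semicontinuity for the Dirichlet energy plus strong $L^4$ convergence for the double-well potential give the right liminf for the local part. The non-positive dipolar term is controlled via its Fourier representation as $-\tfrac{\gamma}{2}\int_{\R^3}(k_x^2+k_y^2)|k|^{-2}|\hat\phi|^2\,dk$, whose magnitude one can bound by a universal constant times $\delta^2\|\bar\phi_\delta\|_{\dot H^{1/2}}^2$ plus a remainder controlled by $\|\partial_{z'}\tilde\phi_\delta\|_{L^2}^2$, both of which are $o(\delta)$.

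For part (ii), approximate $\bar\phi \in H^1(D)$ by $\bar\phi_n \in C^\infty(\overline D) \cap H^2(D)$ in the $H^1(D)$ norm; Sobolev embedding in two dimensions then gives $L^4$ approximation and hence $E_0(\bar\phi_n) \to E_0(\bar\phi)$. For each such smooth $\bar\phi_n$, Theorem~\ref{t:main}(ii) yields $\phi_{n,\delta} \in H^1 \cap L^\infty$ with $z$-average $\bar\phi_n$ on $D$, satisfying
\[
  \delta^{-1}\mathcal E_\delta(\phi_{n,\delta}) \;\le\; (1-2\alpha\delta^2)^{-1} E(\bar\phi_n) + \beta\delta(1+\gamma^2)(1+\|\bar\phi_n\|_{L^\infty}^4)|\partial D| + \beta\|\nabla\bar\phi_n\|_{H^1(D\setminus D_\delta)}^2.
\]
Sending $\delta \to 0$ for fixed $n$: the first term tends to $E_0(\bar\phi_n)$ by Proposition~\ref{p:EdE0}, the second is $O(\delta)$, and the third vanishes by absolute continuity of the integral on the shrinking collar $D \setminus D_\delta$ (here $\bar\phi_n \in H^2$ is used to make sense of $\nabla\bar\phi_n \in H^1$). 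A standard diagonal extraction $n = n(\delta) \to \infty$ produces the required recovery sequence with $\limsup \delta^{-1}\mathcal E_\delta(\phi_\delta) \le E_0(\bar\phi)$, and by construction its $z$-averages converge to $\bar\phi$ in $L^2(D)$.

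The principal obstacle is exactly the $L^\infty$-free liminf: the error term in Theorem~\ref{t:main}(i) scales like $\|\phi\|_{L^\infty}^2$ because it tracks the dipolar self-interaction of a $\delta$-thin collar around $\partial D$, a quantity that cannot be controlled pointwise in the absence of an $L^\infty$ hypothesis. The rescaling scheme above replaces that pointwise bound by an $L^4$ bound coming from the double-well potential at no extra cost, at the price of redoing the three-dimensional non-local estimate in the collar without the cutoff-based reduction to $E$. This is precisely the modification the authors allude to in the text.
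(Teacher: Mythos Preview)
Your part (ii) via density and diagonal extraction is correct, though the paper takes a shorter path: rather than approximating $\bar\phi$ by smooth functions so as to invoke Theorem~\ref{t:main}(ii), it applies the reflection construction directly to an arbitrary $\bar\phi\in H^1(D)$ and replaces the $L^\infty$ edge estimate \eqref{L2edge3d} by the rough $L^2$ bound \eqref{L2edge3drough}. The resulting error terms involve only $\|\bar\phi\|_{L^2(D\setminus D_{2\delta})}$, $\|\nabla\bar\phi\|_{L^2(D\setminus D_\delta)}$ and $\|\bar\phi\|_{L^4(D\setminus D_\delta)}$, all of which vanish as $\delta\to0$ by absolute continuity; no density argument is needed.

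Your part (i), however, has a genuine gap in the dipolar estimate. You claim the nonlocal term is bounded by $C\delta^2\|\bar\phi_\delta\|_{\dot H^{1/2}}^2$ plus a remainder in $\|\partial_{z'}\tilde\phi_\delta\|_{L^2}^2$. But $\bar\phi_\delta\in H^1(D)$, extended by zero to $\R^2$, is \emph{not} in $\dot H^{1/2}(\R^2)$ in general: take $\bar\phi_\delta\equiv1$, for which the zero extension has infinite $\dot H^{1/2}$ seminorm (the integral $\int_D\mathrm{dist}(\mathbf r,\partial D)^{-1}\,d^2r$ diverges). The issue is precisely the edge contribution you are trying to sidestep; Lemma~\ref{l:ded} only yields your claimed bound \emph{after} multiplication by the cutoff $\chi_\delta$, and the discrepancy between $\phi_\delta$ and $\chi_\delta\phi_\delta$ is exactly what needs to be controlled.

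The paper's fix is to use \eqref{L2edge3drough} instead of \eqref{L2edge3d}, which trades the $L^\infty$ error for one proportional to $\|\phi_\delta\|_{L^2(\Omega^\delta)}\|\phi_\delta\|_{L^2(\Omega^\delta\setminus\Omega^\delta_{2\delta})}$. The first factor is $O(\delta^{1/2})$ by Poincar\'e in $z$ and the $L^2$ convergence of $\bar\phi_\delta$. The second factor is split via Poincar\'e into $\delta^{1/2}\|\bar\phi_\delta\|_{L^2(D\setminus D_{2\delta})}+O(\delta^{3/2})$ on $D\times(0,\delta)$, plus a contribution from the rounded edge $\Omega^\delta\setminus(D\times(0,\delta))$. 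The latter is \emph{not} discarded: the paper retains the positive term $\tfrac14\int_{\Omega^\delta\setminus(D\times(0,\delta))}(1-\phi_\delta^2)^2$ in the lower bound and uses its $L^4$ coercivity to absorb the edge $L^2$ norm. After this absorption the error is $o(\delta)$ because $\|\bar\phi_\delta\|_{L^2(D\setminus D_{2\delta})}\to0$ by strong $L^2$ convergence, and one concludes via Proposition~\ref{p:EdE0}.
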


The assumption on the gradient in Theorem \ref{t:gioia} is a natural
assumption consistent with the scaling of the minimum energy for
$\mathcal E_\delta$. In particular, the theorem above applies, upon
extraction of subsequences, to $\phi_\delta \in H^1(\Omega^\delta)$
satisfying
\begin{align}
  \label{limsupE}
  \limsup_{\delta \to 0} \delta^{-1} \mathcal E_\delta(\phi_\delta) <
  +\infty, 
\end{align}
in view of the compactness of their $z$-averages in $H^1(D)$, see
Proposition \ref{p:comp3d}. Therefore, by Corollary \ref{c:nloc3dpos}
we have the following immediate consequence of Theorem \ref{t:gioia}
concerning global minimizers of $\mathcal E_\delta$. Note that the
latter exist for each $\delta > 0$ by Proposition \ref{p:exist3d}.

\begin{corollary}
  \label{c:gioia}
  Let $\phi_\delta \in H^1(\Omega^\delta)$ by a minimizer of
  $\mathcal E_\delta$. Then for any sequence of $\delta \to 0$ we have
  $\bar\phi_\delta \to \bar \phi$ in $L^2(D)$, where $\bar\phi$ takes
  a constant value $\pm 1$ in every connected component of $D$.
\end{corollary}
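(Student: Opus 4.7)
The plan is the standard consequence of $\Gamma$-convergence combined with compactness: minimizers of $\delta^{-1}\mathcal E_\delta$ converge, up to extraction of subsequences, to a minimizer of $E_0$. The analytic substance has already been packaged into Theorem \ref{t:gioia}, Proposition \ref{p:comp3d}, and Corollary \ref{c:nloc3dpos}; what remains is to assemble these ingredients.

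First I would control the minimum energy from above. The constant function $\bar\phi \equiv 1$ lies in $H^1(D)$ and satisfies $E_0(1) = 0$, so Theorem \ref{t:gioia}(ii) produces a sequence $\psi_\delta \in H^1(\Omega^\delta)$ with $\limsup_{\delta \to 0} \delta^{-1}\mathcal E_\delta(\psi_\delta) \leq 0$. Since $\phi_\delta$ is a global minimizer of $\mathcal E_\delta$, this competitor forces $\limsup_{\delta \to 0} \delta^{-1}\mathcal E_\delta(\phi_\delta) \leq 0$; in particular \eqref{limsupE} holds. Applying Proposition \ref{p:comp3d} (compactness in $H^1(D)$ of the $z$-averages under \eqref{limsupE}), I can extract a subsequence along which $\bar\phi_\delta \rightharpoonup \bar\phi$ in $H^1(D)$ and $\bar\phi_\delta \to \bar\phi$ in $L^2(D)$ for some $\bar\phi \in H^1(D)$.

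Next I would identify $\bar\phi$ as a global minimizer of $E_0$. The $\liminf$ inequality from Theorem \ref{t:gioia}(i) yields $E_0(\bar\phi) \leq \liminf_{\delta \to 0} \delta^{-1}\mathcal E_\delta(\phi_\delta)$. For any $\psi \in H^1(D)$, testing minimality of $\phi_\delta$ against the recovery sequence provided by Theorem \ref{t:gioia}(ii) gives $\limsup_{\delta \to 0} \delta^{-1}\mathcal E_\delta(\phi_\delta) \leq E_0(\psi)$. Chaining the two inequalities yields $E_0(\bar\phi) \leq E_0(\psi)$ for every $\psi \in H^1(D)$, so $\bar\phi$ minimizes $E_0$. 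Corollary \ref{c:nloc3dpos} then identifies such minimizers as taking the constant value $+1$ or $-1$ on each connected component of $D$, which is the desired conclusion.

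There are no serious obstacles; the only point of caution is that the minimizer of $E_0$ is not unique, since the sign in each connected component of $D$ may be chosen independently. Consequently the limit $\bar\phi$ depends on the extracted subsequence, and the convergence statement in the corollary is to be read in the standard subsequential sense of convergence-of-minimizers results derived from $\Gamma$-convergence.
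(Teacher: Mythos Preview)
Your argument is correct and follows the same route the paper sketches: use the upper bound $\inf\mathcal E_\delta\le 0$ to verify \eqref{limsupE}, invoke Proposition~\ref{p:comp3d} for compactness, and then apply the standard $\Gamma$-convergence consequence of Theorem~\ref{t:gioia} to conclude that subsequential limits minimize $E_0$. One small correction: in your final sentence you attribute the characterization of the minimizers of $E_0$ to Corollary~\ref{c:nloc3dpos}, but that corollary only gives the bound $\inf_{\phi}\mathcal E(\phi)\le 0$ (which is how the paper uses it, in place of your recovery-sequence argument for the upper bound); the fact that minimizers of $E_0$ are $\pm 1$ on each connected component is immediate from $E_0\ge 0$ with equality precisely when $\nabla\bar\phi=0$ and $\bar\phi^2=1$.
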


Let us point out that the addition to $\mathcal E_\delta(\phi)$ of an
applied field term $-\int_{\Omega^\delta} h \phi \, d^3 r$ with
$h = h(x, y) \in L^2(\Omega^\delta)$ does not change the
$\Gamma$-convergence result in Theorem \ref{t:gioia}, provided that
the term $-\int_D h \bar\phi \, d^2r$ is added to the definition of
$E_0$ in \eqref{E0}. Thus, as expected, in the thin film limit with
$D$ and $\gamma$ fixed one recovers the local Ginzburg-Landau energy
functional. We note, however, that physically the effect of the
dipolar interaction is still present in the renormalization of the
transition temperature from $T_c$ to $T_c^*$.

We finally turn our attention to a regime of practical interest in
which modulated patterns spontaneously emerge. In view of the previous
result, this requires simultaneous vanishing of the film thickness and
blowup of the film's lateral dimensions. To this end, we introduce a
small parameter $\eps > 0$ and consider domain $D^\eps = \eps^{-1} D$,
with a fixed bounded open set $D \subset \R^2$ with $C^2$ boundary
describing the shape of the film in the plane and lateral length scale
$\eps^{-1} \gg 1$. Next, we rescale all lengths with $\eps^{-1}$ and
define the rescaled domain $\Omega^\eps \subset \R^3$ occupied by the
material. Thus, for a film of thickness $\delta = \delta_\eps$ we have
$D \times (0, \eps \delta_\eps) \subset \Omega^\eps \subset (D +
B_{\eps\delta_\eps}) \times (0, \eps \delta_\eps)$.

In the rescaled variables, the energy in \eqref{E} takes the following
form:
\begin{align}
  \label{Eeps}
  \mathcal E_\eps(\phi) := \int_{\Omega^\eps} \left( \frac12 |\nabla
  \phi|^2 + \frac{1}{4 \eps^2} \left( 1 - \phi^2 \right)^2 \right) d^3
  r + {\gamma \over 2 \eps^2} \int_{\R^3} \left( \partial_z \phi
  (-\Delta)^{-1} \partial_z \phi - \phi^2 \right) d^3 r,
\end{align}
where $\phi \in H^1(\Omega^\eps)$ and the energy has been rescaled
with an overall factor $\eps$. Similarly, rescaling the energy in
\eqref{EE} with $\eps$ as well, for $\bar\phi \in H^1(D)$ we define 
\begin{align}
  \label{EEeps}
  E_\eps(\bar\phi) := \int_D \left( \frac{\eps}{2} (1 - \alpha 
  \delta_\eps^2) |\nabla \bar\phi|^2 + \frac{1}{4 \eps} \left( 1 -
  \bar\phi^2 \right)^2 \right) d^2 r \qquad \qquad \notag \\
  - {\gamma \delta_\eps \over 16 \pi} \int_{\R^2} \int_{\R^2}
  {(\chi_{\eps\delta_\eps} (\mathbf r) \bar\phi(\mathbf r) -
  \chi_{\eps \delta_\eps} (\mathbf r') \bar\phi(\mathbf r'))^2 \over
  |\mathbf r - \mathbf r'|^3} \, d^2 r \, d^2 r'. 
\end{align}
Notice that the overall factor of $\eps$ in the energy scale for both
energies above is chosen to obtain the Modica-Mortola scaling
\cite{modica87} in the reduced two-dimensional energy $E_\eps$, in
anticipation of its limit behavior as $\eps \to 0$.

With these notations, the lower bound in Theorem \ref{t:main} for
$\phi_\eps \in H^1(\Omega^\eps) \cap L^\infty(\Omega^\eps)$ satisfying
$\| \phi_\eps \|_{L^\infty(\Omega^\eps)} \leq M$ for some $M \geq 1$
fixed and for all $\delta_\eps$ sufficiently small becomes
\begin{align}
  \label{Eepslb}
  \mathcal E_\eps(\phi_\eps) \geq E_\eps(\bar\phi_\eps) \delta_\eps -
  C \delta_\eps^2, 
\end{align}
where
\begin{align}
  \label{aveps}
  \bar\phi_\eps(x, y) = {1 \over \eps \delta_\eps} \int_0^{\eps
  \delta_\eps} \phi_\eps(x, y, z) \, dz \qquad (x, y) \in D,
\end{align}
and $C > 0$ depends only on $\gamma$, $D$ and $M$, for a suitable
choice of $\alpha$ depending only on $\gamma$.  Concentrating on the
bulk properties of the configurations, we further assume that the
order parameter is equal to its bulk equilibrium value near the film
edge and does not exceed it in magnitude throughout the film (a more
thorough analysis of the behavior of global minimizers as $\eps \to 0$
goes far beyond the scope of the present paper and will be treated
elsewhere). Hence, we set $M = 1$ and for $\rho > 0$ sufficiently
small fixed we assume that $\bar\phi_\eps = 1$ in
$D \backslash D_\rho$, where $D_\rho$ is as in \eqref{Omd}. In this
case the upper bound from Theorem \ref{t:main} reads for all
$\bar\phi_\eps \in H^1(D) \cap L^\infty(D)$ such that
$\| \bar\phi_\eps \|_{L^\infty(D)} = 1$ and $\bar\phi_\eps = 1$ in
$D \backslash D_\rho$, for all $\delta_\eps$ sufficiently small:
\begin{align}
  \label{Eepsub}
  \mathcal E_\eps(\phi_\eps) \leq (1 - 2 \alpha \delta_\eps^2)^{-1}
  E_\eps(\bar\phi_\eps) \delta_\eps + C \delta_\eps^2,
\end{align}
where $C > 0$ is as before and
$\phi_\eps \in H^1(\Omega^\eps) \cap L^\infty(\Omega^\eps)$ satisfies
$\phi_\eps(x, y, z) = \bar\phi_\eps(x, y)$ for all $(x, y) \in D$, and
$\| \phi_\eps \|_{L^\infty(\Omega^\eps)} = 1$. We note that related
ideas were used in \cite{m:cmp10} in the asymptotic analysis of the
two-dimensional Ohta-Kawasaki energy.

We now specify the scaling of $\delta_\eps$ with $\eps$ for which
modulated patterns emerge. This scaling has been recently identified
in \cite{kmn17} in the studies of a closely related model from
micromagnetism. For $\lambda > 0$ fixed, we set
\begin{align}
  \label{deps}
  \delta_\eps = {\lambda \over \gamma |\ln \eps|}, 
\end{align}
and consider the limit behavior of the energies in \eqref{Eeps} and
\eqref{EEeps} as $\eps \to 0$. In \cite{kmn17}, a critical value of
$\lambda = \lambda_c$ has been identified, below which no modulated
patterns emerge as energy minimizers in this limit, while above this
value pattern formation occurs. A similar phenomenon takes place in
our problem, too. In the subcritical regime, the conclusion above is a
consequence of the following $\Gamma$-convergence result. In our case,
the threshold value of $\lambda$ is
\begin{align}
  \label{lamc}
  \lambda_c := {2 \pi \sqrt{2} \over 3}. 
\end{align}
We also define the constants
\begin{align}
  \label{sig01}
  \sigma_0 = {2 \sqrt{2} \over 3}, \qquad \sigma_1 = {1 \over \pi},
\end{align}
and notice that $\lambda_c = \sigma_0 / \sigma_1$. The following
theorem is a close analog of Theorem 3.5 in \cite{kmn17} obtained in a
periodic setting.

\begin{theorem}
  \label{t:nolte}
  Let $\rho > 0$, $0 < \lambda < \lambda_c$ and let $E_\eps$ be
  defined by \eqref{EEeps} with $\delta_\eps$ given by
  \eqref{deps}. Then as $\eps \to 0$ we have
  \begin{align}
    E_\eps \stackrel{\Gamma}{\to} E_*, \qquad E_*(\bar\phi) :=
    -\frac14 \sigma_1 \lambda |\partial D| +\frac12  (\sigma_0 -
    \sigma_1 \lambda) \int_D |\nabla \bar\phi| \, d^2 r,
  \end{align}
  where $\bar\phi \in BV(D; \{-1,1\})$, with respect to the $L^1(D)$
  convergence, in the following sense:
  \begin{enumerate}[(i)]
  \item For every sequence of
    $\bar\phi_\eps \in H^1(D) \cap L^\infty(D)$ such that
    $\bar\phi_\eps = 1$ in $D \backslash D_\rho$,
    $\| \bar\phi_\eps \|_{L^\infty(D)} = 1$, and
    \begin{align}
      \limsup_{\eps \to 0} E_\eps(\bar\phi_\eps) <
      +\infty,
    \end{align}
    there exists a subsequence (not relabelled) such that
    $\bar\phi_\eps \to \bar\phi$ in $L^1(D)$ and
    \begin{align}
      \label{liminfEeps}
      \liminf_{\eps \to 0} E_\eps(\bar\phi_\eps) \geq E_*(\bar\phi),
    \end{align}
    for some $\bar\phi \in BV(D; \{-1,1\})$ such that $\bar\phi = 1$
    in $D \backslash D_\rho$.
  \item For any $\bar\phi \in BV(D; \{-1,1\})$ such that
    $\bar\phi = 1$ in $D \backslash D_\rho$ there exists a sequence of
    $\bar\phi_\eps \in H^1(D) \cap L^\infty(D)$ such that
    $\bar\phi_\eps = 1$ in $D \backslash D_\rho$,
    $\| \bar\phi_\eps \|_{L^\infty(D)} = 1$,
    $\bar\phi_\eps \to \bar\phi$ in $L^1(D)$ and
   \begin{align}
     \label{limsupEeps}
     \limsup_{\eps \to 0} E_\eps(\bar\phi_\eps) \leq E_*(\bar\phi).
    \end{align}
  \end{enumerate}
\end{theorem}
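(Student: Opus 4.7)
The plan is to split $E_\eps = E_\eps^{\mathrm{MM}} + E_\eps^{\mathrm{NL}}$, where $E_\eps^{\mathrm{MM}}(\bar\phi) = \int_D \bigl( \frac{\eps}{2}(1-\alpha\delta_\eps^2)|\nabla\bar\phi|^2 + \frac{1}{4\eps}(1-\bar\phi^2)^2 \bigr) d^2 r$ is the local Modica-Mortola part and $E_\eps^{\mathrm{NL}}$ is the (non-positive) non-local double integral. Since $\alpha \delta_\eps^2 \to 0$, the local part is a standard Modica-Mortola functional with double-well $W(s) = \frac14(1-s^2)^2$, whose $\Gamma$-limit on $BV(D;\{\pm 1\})$ is $\tfrac12\sigma_0\int_D |\nabla\bar\phi|$, with $\sigma_0 = \int_{-1}^1 \sqrt{2W(s)}\,ds = 2\sqrt{2}/3$. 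After multiplication by $\gamma\delta_\eps = \lambda/|\ln\eps|$, the non-local term should contribute a negative surface tension $-\tfrac12\sigma_1\lambda\int_D|\nabla\bar\phi|$ at interior interfaces plus a negative boundary term $-\tfrac14\sigma_1\lambda |\partial D|$, with $\sigma_1 = 1/\pi$. The boundary term arises from the effective sharp transition of $\chi_{\eps\delta_\eps}\bar\phi_\eps$ from $0$ outside $D$ to $1$ inside across an annulus of width $\eps\delta_\eps$; the factor $\tfrac14$ versus $\tfrac12$ reflects that this jump has height $1$, while interior jumps of $\bar\phi$ have height $2$ (so that $(\chi\bar\phi - \chi'\bar\phi')^2$ is $1$ versus $4$).

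For part (i), I would invoke the interpolation Lemma \ref{l:interp} to obtain, with sharp constants, an upper bound on the positive double integral of the form
\begin{align*}
\frac{1}{16\pi}\int_{\R^2}\int_{\R^2} \frac{(\chi_{\eps\delta_\eps}\bar\phi(r)-\chi_{\eps\delta_\eps}\bar\phi(r'))^2}{|r-r'|^3}\,d^2 r\, d^2 r' \leq \frac{\sigma_1 |\ln\eps|}{\sigma_0} E_\eps^{\mathrm{MM}}(\bar\phi) + \tfrac14 \sigma_1 |\ln\eps| |\partial D| + o(|\ln\eps|).
\end{align*}
Combined with $\gamma\delta_\eps = \lambda/|\ln\eps|$, this rearranges to $E_\eps(\bar\phi_\eps) \geq (1 - \lambda/\lambda_c)\, E_\eps^{\mathrm{MM}}(\bar\phi_\eps) - \tfrac14\sigma_1\lambda|\partial D| + o(1)$, so the subcritical assumption $\lambda < \lambda_c$ yields a uniform bound on $E_\eps^{\mathrm{MM}}(\bar\phi_\eps)$. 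Standard Modica-Mortola compactness then produces a subsequence $\bar\phi_\eps \to \bar\phi$ in $L^1(D)$ with $\bar\phi \in BV(D;\{\pm 1\})$ and $\bar\phi = 1$ on $D\setminus D_\rho$. The liminf inequality \eqref{liminfEeps} follows by applying the sharp interpolation estimate localized to tubular neighbourhoods of the jump set $S_{\bar\phi}$ and of $\partial D$ (of width much larger than $\eps$ but much smaller than the macroscopic scale), combined with the standard one-dimensional slicing argument of Modica-Mortola inside each neighbourhood.

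For part (ii), I would use the classical recovery sequence $\bar\phi_\eps(\mathbf r) = q(d(\mathbf r)/\eps)$ in a tubular neighbourhood of $S_{\bar\phi}$ in $D_\rho$, where $d$ is the signed distance to $S_{\bar\phi}$ and $q$ is the optimal 1D profile $q'' = W'(q)$ with $q(\pm \infty) = \pm 1$, smoothly matched to $\bar\phi_\eps \equiv 1$ on $D \setminus D_\rho$. The standard computation gives $E_\eps^{\mathrm{MM}}(\bar\phi_\eps) \to \tfrac12\sigma_0\int_D |\nabla\bar\phi|$. For the non-local term, the integrand $(\chi_{\eps\delta_\eps}\bar\phi_\eps(r) - \chi_{\eps\delta_\eps}\bar\phi_\eps(r'))^2$ is close to $4$ for $(r,r')$ on opposite sides of an interior jump, close to $1$ when one point is in $D_{2\eps\delta_\eps}$ and the other outside $D$, and negligible otherwise. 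Using the identity $\int_{\R} (t^2 + s^2)^{-3/2}\, dt = 2/s^2$ and integrating in coordinates parallel and perpendicular to each interface or boundary segment generates the expected factor of $|\ln\eps|$ (cut off below at $\eps$ for interior interfaces, at $\eps\delta_\eps$ for the boundary, and above at the macroscopic scale), which cancels against $\gamma\delta_\eps$ to yield exactly the claimed coefficients.

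The main obstacle, novel with respect to the periodic setting of \cite{kmn17}, is the identification and sharp evaluation of the boundary contribution $-\tfrac14\sigma_1\lambda|\partial D|$. The cutoff $\chi_{\eps\delta_\eps}$ operates on the scale $\eps\delta_\eps = \eps\lambda/(\gamma|\ln\eps|)$, which is asymptotically much larger than the interior transition scale $\eps$ but still vanishing, and the logarithmic integrals near $\partial D$ must be cut off at this intermediate scale rather than at $\eps$. Matching these two scales in both directions of the $\Gamma$-convergence, and keeping the interior-interface contribution and boundary contribution from interfering, is where the bulk of the technical work lies; the hypothesis $\bar\phi_\eps \equiv 1$ on $D\setminus D_\rho$ for fixed $\rho > 0$ is precisely what guarantees a uniform-in-$\eps$ separation of $S_{\bar\phi_\eps}$ from $\partial D$ and thereby decouples the two contributions. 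Apart from this boundary treatment, the overall architecture of the argument closely parallels that of Theorem 3.5 in \cite{kmn17}.
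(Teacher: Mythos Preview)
Your overall architecture is correct and closely parallels the paper's: Modica--Mortola plus the interpolation Lemma~\ref{l:interp} for compactness and the lower bound, and the standard optimal-profile recovery sequence for the upper bound. The compactness argument and the upper bound are fine as written.

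There is, however, a genuine gap in your lower bound. Your displayed inequality with the boundary constant $\tfrac14\sigma_1|\ln\eps|\,|\partial D|$ does \emph{not} follow from a direct application of Lemma~\ref{l:interp} to $\psi=\chi_{\eps\delta_\eps}\bar\phi_\eps$. The lemma's leading constant $3/\pi$ is sharp only for jumps of height $2$ (from $-1$ to $+1$); for the height-$1$ jump of $\chi_{\eps\delta_\eps}$ across $\partial D$ the true $H^{1/2}$ cost scales like $(b-a)^2=1$, whereas the lemma controls it through $\int_0^1(1-t^2)\,dt=\tfrac23$, so the lemma overestimates the boundary contribution by exactly a factor of $2$. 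Applying Lemma~\ref{l:interp} to $\chi\bar\phi_\eps$ therefore yields $\tfrac12\sigma_1|\ln\eps|\,|\partial D|$, which after multiplying by $-\lambda/|\ln\eps|$ gives a liminf that undershoots $E_*$ by $\tfrac14\sigma_1\lambda|\partial D|$ and fails to match the upper bound. Your proposed fix of ``localizing the interpolation estimate to tubular neighbourhoods of $S_{\bar\phi}$ and of $\partial D$'' is too vague to repair this, and in any case $S_{\bar\phi}$ is not known before passing to the limit.

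The paper resolves this by a device you did not use: since $\bar\phi_\eps=1$ on $D\setminus D_\rho\supset\{\chi_{\eps\delta_\eps}\neq 1\}$, one has the exact algebraic identity $\chi_{\eps\delta_\eps}\bar\phi_\eps=(\bar\phi_\eps-1)+\chi_{\eps\delta_\eps}$. Expanding the square in the nonlocal kernel then separates the integrand into $(\bar\phi_\eps-\bar\phi_\eps')^2$, $(\chi_{\eps\delta_\eps}-\chi_{\eps\delta_\eps}')^2$, and a cross term. The first is bounded via the interpolation lemma (in the form of Lemma~\ref{l:interp2}) and gives the interior surface tension; the second is computed \emph{directly} (not through the interpolation lemma) and yields the sharp estimate $\int\!\!\int |\mathbf r-\mathbf r'|^{-3}(\chi-\chi')^2\leq 4\,|\partial D|\,|\ln\eps|+O(\ln|\ln\eps|)$, which after the prefactor $\lambda/(16\pi|\ln\eps|)$ gives exactly $\tfrac14\sigma_1\lambda|\partial D|$; and the cross term is $O(1)$ because the supports of $\nabla\bar\phi_\eps$ and $\nabla\chi_{\eps\delta_\eps}$ are separated by $\rho/2$. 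This decoupling---algebraic rather than geometric---is precisely what the hypothesis $\bar\phi_\eps\equiv 1$ on $D\setminus D_\rho$ buys, and it is the missing ingredient in your argument.
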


\begin{remark}
  \label{r:nolte}
  The inequalities in \eqref{liminfEeps} and \eqref{limsupEeps} remain
  true for $\lambda \geq \lambda_c$ as well, if one assumes that
  $\bar \phi_\eps \to \bar \phi$ in $BV(D)$ in addition to
  $\bar\phi_\eps = 1$ in $D \backslash D_\rho$ and
  $\| \bar\phi_\eps \|_{L^\infty(D)} = 1$. However, the compactness
  statement of Theorem \ref{t:nolte} no longer holds for
  $\lambda > \lambda_c$ (for more details in a periodic setting, see
  \cite{kmn17}).
\end{remark}

Theorem \ref{t:nolte} implies, in particular, that for
$\lambda < \lambda_c$ all minimizers of $E_\eps$ among functions
$\bar\phi_\eps \in H^1(D) \cap L^\infty(D)$ satisfying
$\| \bar \phi_\eps \|_{L^\infty(D)} = 1$ and $\bar \phi_\eps = 1$ in
$D \backslash D_\rho$ for some $\rho > 0$ converge a.e. to
$\bar\phi = 1$ in $D$ as $\eps \to 0$, implying that minimizers within
this class approach a monodomain state for all $\eps$ sufficiently
small. This is consistent with the result in Corollary \ref{c:gioia}
in the other scaling regime considered earlier. As was already noted,
relaxing the assumption of boundedness and the behavior near the edge
to make the same conclusion about the unconstrained minimizers of
$E_\eps$ would require a rather delicate analysis of the energy
minimizing configurations near the film edge, which goes beyond the
scope of the present paper. Still, within the considered restricted
class we may conclude, by \eqref{Eepslb} and \eqref{Eepsub}, that the
same result is true for the $z$-averages $\bar \phi_\eps$ of the
minimizers $\phi_\eps$ of $\mathcal E_\eps$ in the respective
class. The precise statement is in the following corollary.

\begin{corollary}
  \label{c:nolteno}
  Let $\rho > 0$, $0 < \lambda < \lambda_c$ and let $\mathcal E_\eps$
  be defined by \eqref{Eeps} with $\delta_\eps$ given by
  \eqref{deps}. Let
  $\phi_\eps \in H^1(\Omega^\eps) \cap L^\infty(\Omega^\eps)$ be a
  minimizer of $\mathcal E_\eps$ among all functions satisfying
  $\phi_\eps = 1$ in
  $\Omega^\eps \backslash (D_\rho \times (0, \eps \delta_\eps))$ and
  $\| \phi_\eps \|_{L^\infty(\Omega^\eps)} = 1$. Then if
  $\bar\phi_\eps$ is defined by \eqref{aveps}, we have
  $\bar\phi_\eps \to 1$ in $BV(D)$ as $\eps \to 0$.
\end{corollary}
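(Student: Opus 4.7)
My plan is to combine minimality of $\phi_\eps$ with the asymptotic equivalence bounds \eqref{Eepslb}--\eqref{Eepsub} from Theorem \ref{t:main} and the $\Gamma$-convergence in Theorem \ref{t:nolte}, so as to pin the $z$-averages $\bar\phi_\eps$ to the constant $1$.

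First I would establish an upper bound on the minimum energy using a trivial competitor. The configuration $\psi \equiv 1$ on $\Omega^\eps$ satisfies $\psi = 1$ on $\Omega^\eps \setminus (D_\rho \times (0, \eps \delta_\eps))$ and $\|\psi\|_{L^\infty(\Omega^\eps)} = 1$, so by minimality $\mathcal E_\eps(\phi_\eps) \leq \mathcal E_\eps(1)$. Applying the upper bound \eqref{Eepsub} with $\bar\phi \equiv 1$ (which trivially satisfies the constraints) yields
\begin{align*}
  \mathcal E_\eps(1) \leq (1 - 2\alpha \delta_\eps^2)^{-1} E_\eps(1) \delta_\eps + C \delta_\eps^2.
\end{align*}
The constant sequence $\bar\phi_\eps \equiv 1$ serves as its own recovery sequence in Theorem \ref{t:nolte}(ii) for the target $\bar\phi \equiv 1$, hence $\limsup_{\eps \to 0} E_\eps(1) \leq E_*(1) = -\tfrac14 \sigma_1 \lambda |\partial D|$.

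Next I would apply the lower bound \eqref{Eepslb} to $\phi_\eps$, obtaining $\mathcal E_\eps(\phi_\eps) \geq E_\eps(\bar\phi_\eps) \delta_\eps - C \delta_\eps^2$. Chaining with the upper bound and dividing by $\delta_\eps$ produces $\limsup_{\eps \to 0} E_\eps(\bar\phi_\eps) \leq E_*(1)$. Jensen's inequality applied to the $z$-average gives $\|\bar\phi_\eps\|_{L^\infty(D)} \leq 1$, while the edge constraint $\phi_\eps = 1$ on $\Omega^\eps \setminus (D_\rho \times (0, \eps \delta_\eps))$ forces $\bar\phi_\eps = 1$ on $D \setminus D_\rho$, whence $\|\bar\phi_\eps\|_{L^\infty(D)} = 1$. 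The hypotheses of Theorem \ref{t:nolte}(i) are therefore met. Extracting a subsequence, $\bar\phi_{\eps_k} \to \bar\phi$ in $L^1(D)$ for some $\bar\phi \in BV(D; \{-1, 1\})$ with $\bar\phi = 1$ on $D \setminus D_\rho$, and
\begin{align*}
  E_*(\bar\phi) \leq \liminf_{k \to \infty} E_{\eps_k}(\bar\phi_{\eps_k}) \leq E_*(1).
\end{align*}
Since $E_*(\bar\phi) - E_*(1) = \tfrac12 (\sigma_0 - \sigma_1 \lambda) |D\bar\phi|(D)$ and $\sigma_0 - \sigma_1 \lambda > 0$ for $\lambda < \lambda_c$, this forces $|D\bar\phi|(D) = 0$, so $\bar\phi$ is constant on each connected component of $D$. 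Combined with the boundary anchor $\bar\phi = 1$ on $D \setminus D_\rho$, I conclude $\bar\phi \equiv 1$ on $D$. Uniqueness of the subsequential limit then upgrades convergence to the full sequence: $\bar\phi_\eps \to 1$ in $L^1(D)$.

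The final step—upgrading $L^1$ convergence to strict convergence in $BV(D)$, that is $|D \bar\phi_\eps|(D) \to 0$—is the main obstacle. Combining both sides of the sandwich yields $\lim_{\eps \to 0} E_\eps(\bar\phi_\eps) = E_*(1)$, so no slack remains in the $\Gamma$-liminf bound of Theorem \ref{t:nolte}(i). A quantitative inspection of that proof, in which the Modica-Mortola inequality is balanced against Lemma \ref{l:interp} with the strictly positive effective coefficient $\sigma_0 - \sigma_1 \lambda$, should produce an excess estimate of the form $E_\eps(\bar\phi_\eps) - E_*(1) \geq c(\lambda) |D\bar\phi_\eps|(D) - o(1)$, from which $|D\bar\phi_\eps|(D) \to 0$ follows. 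The main difficulty is that the naive Modica-Mortola estimate controls only the BV norm of the auxiliary potential $W(\bar\phi_\eps) := \bar\phi_\eps - \tfrac13 \bar\phi_\eps^3$ rather than $\bar\phi_\eps$ itself; closing the gap requires the one-sided bound $|\bar\phi_\eps| \leq 1$ together with the edge constraint $\bar\phi_\eps = 1$ near $\partial D$ to ensure that any BV mass of $\bar\phi_\eps$ arises from transitions to which $W$ is also sensitive.
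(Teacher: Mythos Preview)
Your approach is essentially identical to the paper's: test with $\phi\equiv 1$, sandwich $\mathcal E_\eps(\phi_\eps)$ between \eqref{Eepslb} and \eqref{Eepsub}, divide by $\delta_\eps$, and feed the resulting bound $E_\eps(\bar\phi_\eps)\leq E_*(1)+o(1)$ into Theorem~\ref{t:nolte}. The paper's demonstration is in fact terser than yours on the final $BV$ upgrade---it simply asserts that ``by the $\Gamma$-convergence'' one obtains $\tfrac12(\sigma_0-\sigma_1\lambda)\int_D|\nabla\bar\phi_\eps|\,d^2r\to 0$, which is precisely the quantitative excess estimate you say you would extract from the proof of Theorem~\ref{t:nolte}; your caution that the Modica--Mortola bound literally controls $\int_D|\nabla\bar\psi_\eps|$ with $\bar\psi_\eps=\bar\phi_\eps-\tfrac13\bar\phi_\eps^3$ rather than $\int_D|\nabla\bar\phi_\eps|$ is well placed, and the paper does not spell out that passage either.
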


We also point out that, despite asymptotic non-existence of
non-trivial minimizers of $E_\eps$ for $\lambda < \lambda_c$ the
effect of the dipolar interaction can still be seen in the energetics
via a renormalized line tension $\sigma = \sigma_0 - \lambda \sigma_1$
for the domain patterns in the plane. At the same time, Remark
\ref{r:nolte} also allows us to conclude that for
$\lambda > \lambda_c$ the minimizers in Corollary \ref{c:nolteno} must
develop spatial oscillations as $\eps \to 0$.

\begin{corollary}
  \label{c:noltesi}
  Let $\rho$, $\phi_\eps$ and $\bar\phi_\eps$ be as in Corollary
  \ref{c:nolteno}, and let $\lambda > \lambda_c$. Then
  $\bar\phi_\eps \not\to 1$ in $BV(D)$, as $\eps \to 0$.
\end{corollary}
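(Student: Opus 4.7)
The plan is to argue by contradiction, using Remark \ref{r:nolte} together with the sandwich estimates \eqref{Eepslb}--\eqref{Eepsub} from Theorem \ref{t:main} and the minimality of $\phi_\eps$. Suppose, contrary to the claim, that $\bar\phi_\eps \to 1$ in $BV(D)$ as $\eps \to 0$. Then the liminf inequality \eqref{liminfEeps}, which by Remark \ref{r:nolte} continues to hold for $\lambda \geq \lambda_c$ once $BV$ convergence is assumed, yields
$$\liminf_{\eps \to 0} E_\eps(\bar\phi_\eps) \;\geq\; E_*(1) \;=\; -\tfrac{1}{4}\,\sigma_1 \lambda\, |\partial D|,$$
since the interface term vanishes for $\bar\phi \equiv 1$. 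The goal is therefore to exhibit an admissible competitor whose reduced energy limit lies strictly below $E_*(1)$.

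Since $\lambda > \lambda_c = \sigma_0/\sigma_1$, the coefficient $\sigma_0 - \sigma_1 \lambda$ is strictly negative. I will pick a test pattern $\bar\psi \in BV(D;\{-1,1\})$ with $\bar\psi = 1$ on $D \setminus D_\rho$ and large interfacial length $\int_D |\nabla \bar\psi|$ (for instance, a union of many thin $-1$-valued stripes compactly contained in $D_\rho$, which is non-empty for $\rho$ small); choosing the total length sufficiently large makes
$$E_*(\bar\psi) \;=\; -\tfrac{1}{4}\,\sigma_1 \lambda\, |\partial D| + \tfrac{1}{2}(\sigma_0 - \sigma_1 \lambda) \int_D |\nabla \bar\psi| \,d^2 r \;<\; E_*(1).$$
By the limsup half of Remark \ref{r:nolte}, there is a recovery sequence $\bar\psi_\eps \in H^1(D) \cap L^\infty(D)$ with $\bar\psi_\eps = 1$ on $D \setminus D_\rho$, $\|\bar\psi_\eps\|_{L^\infty(D)} = 1$, and $\limsup_{\eps \to 0} E_\eps(\bar\psi_\eps) \leq E_*(\bar\psi)$.

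I then convert $\bar\psi_\eps$ into a three-dimensional competitor $\psi_\eps \in H^1(\Omega^\eps) \cap L^\infty(\Omega^\eps)$ via Theorem \ref{t:main}(ii) in the rescaled form \eqref{Eepsub}, ensuring $\psi_\eps(x,y,z) = \bar\psi_\eps(x,y)$ on $(x,y) \in D$ and $\|\psi_\eps\|_{L^\infty(\Omega^\eps)} = 1$; on the $O(\eps\delta_\eps)$-thin rounded-edge sliver $\Omega^\eps \setminus (D \times (0,\eps\delta_\eps))$, I set $\psi_\eps \equiv 1$, which is consistent because $\bar\psi_\eps = 1$ already on a fixed neighborhood of $\partial D$. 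Then $\psi_\eps$ lies in the admissible class of Corollary \ref{c:nolteno}, so minimality of $\phi_\eps$ combined with \eqref{Eepslb} and \eqref{Eepsub} gives
$$E_\eps(\bar\phi_\eps) - C\delta_\eps \;\leq\; \delta_\eps^{-1}\mathcal{E}_\eps(\phi_\eps) \;\leq\; \delta_\eps^{-1}\mathcal{E}_\eps(\psi_\eps) \;\leq\; (1 - 2\alpha \delta_\eps^2)^{-1} E_\eps(\bar\psi_\eps) + C\delta_\eps.$$
Taking $\liminf$ on the left and $\limsup$ on the right, and using that $(1 - 2\alpha\delta_\eps^2)^{-1} \to 1$ while $E_\eps(\bar\psi_\eps)$ is bounded, produces $\liminf_{\eps \to 0} E_\eps(\bar\phi_\eps) \leq E_*(\bar\psi) < E_*(1)$, contradicting the lower bound derived at the outset, and completing the proof. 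The main technical nuisance I anticipate is the edge-collar extension of $\bar\psi_\eps$ needed to place $\psi_\eps$ in the admissible class — making sure the modification on the $O(\eps\delta_\eps)$-thin rounded edge does not spoil the additive error in \eqref{Eepsub}; this is geometrically benign because $\bar\psi_\eps$ is already identically $1$ on the fixed collar $D \setminus D_\rho$, independent of $\eps$.
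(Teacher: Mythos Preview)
Your argument is correct and is precisely the one the paper has in mind: the paper does not spell out a proof of Corollary~\ref{c:noltesi} but simply notes that ``Remark~\ref{r:nolte} also allows us to conclude that for $\lambda > \lambda_c$ the minimizers in Corollary~\ref{c:nolteno} must develop spatial oscillations as $\eps \to 0$,'' and your contradiction argument via the sandwich estimates \eqref{Eepslb}--\eqref{Eepsub}, minimality, and a competitor with nontrivial interface is exactly how one cashes this out.
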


\noindent In fact, it is possible to show that for
$\lambda > \lambda_c$ minimizers of $E_\eps$ or the $z$-averages of
minimizers of $\mathcal E_\eps$ cannot converge in $BV(D)$. Instead,
they develop fine oscillations throughout $D$ (for an analogous result
in micromagnetics, see \cite[Theorem 3.6]{kmn17}).

\section{Preliminaries}
\label{sec:preliminaries}

In this section, we collect a few basic facts for various terms
appearing both in the original energy in \eqref{E} and the reduced
energy in \eqref{EE}. In particular, we establish existence and
regularity of the minimizers of both energies. We remind the reader
that, except in the following lemma, we always consider a function
$\phi \in H^1(\Omega)$ to be extended by zero to the whole space
whenever we view $\phi$ as a function defined on $\R^3$. Similarly, a
function $\bar\phi \in H^1(D)$ is assumed to be extended by zero to
the rest of $\R^2$ whenever it is treated as a function defined on
$\R^2$.

We begin by a characterization of the non-local term appearing in
\eqref{E}. Recall that the derivative $\partial_z \phi$ in \eqref{E}
is understood in the distributional sense in the whole of $\R^3$.

\begin{lemma}
  \label{l:nloce}
  Let $\phi, \psi \in H^1(\Omega)$ and let $\tilde\phi, \tilde\psi$ be
  their extensions by zero to $\R^3 \backslash \Omega$,
  respectively. Then
  \begin{align}
    \label{dzphi3d}
    \int_{\R^3} \partial_z \tilde\phi  (-\Delta)^{-1} \partial_z
    \tilde\psi \, d^3 r := {1 \over 4 \pi} \int_{\R^3} \int_{\R^3} {\partial_z
    \tilde \phi(\mathbf r) \, \partial_z \tilde \psi(\mathbf r') \over
    |\mathbf r - \mathbf r'|} \, d^3 r \, d^3 r'
  \end{align}
  defines an inner product on $H^1(\Omega)$. Furthermore,
  $\int_{\R^3} \partial_z \tilde\phi (-\Delta)^{-1} \partial_z
  \tilde\psi \, d^3 r \leq \| \phi \|_{L^2(\Omega)} \| \psi
  \|_{L^2(\Omega)}$, and we have
  \begin{align}
    \label{dzphi3d2}
    \int_{\R^3} \partial_z \tilde\phi  (-\Delta)^{-1} \partial_z
    \tilde\psi \, d^3 r = -\int_{\Omega} \phi \, \partial^2_z
    (-\Delta)^{-1} \psi \, d^3 r, 
  \end{align}
  where $(-\Delta)^{-1} \psi \in W^{2,2}_{loc}(\R^3)$ is the Newtonian
  potential of $\tilde\psi$:
  \begin{align}
    \label{riesz}
    (-\Delta)^{-1} \psi(\mathbf r) := {1 \over 4 \pi} \int_\Omega
    {\psi(\mathbf r') \over |\mathbf r - \mathbf r'|} \, d^3 r \qquad
    \mathbf r \in \R^3. 
  \end{align}
\end{lemma}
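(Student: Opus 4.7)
My approach is Fourier analysis throughout. Extend $\phi,\psi\in H^1(\Omega)$ by zero to obtain $\tilde\phi,\tilde\psi\in L^2(\R^3)$ with compact support; by the Paley--Wiener theorem their Fourier transforms $\widehat{\tilde\phi},\widehat{\tilde\psi}$ are entire functions of exponential type lying in $L^2(\R^3)$, with $\|\widehat{\tilde\phi}\|_{L^2(\R^3)}=\|\phi\|_{L^2(\Omega)}$ by Plancherel. Since $1/(4\pi|\mathbf r|)$ has Fourier symbol $1/|k|^2$, I would first verify for test functions $\phi,\psi\in C^\infty_c(\Omega)$ (for which the integrand is smooth and rapidly decaying at infinity) the identity
\begin{equation*}
\frac{1}{4\pi}\int_{\R^3}\int_{\R^3}\frac{\partial_z\tilde\phi(\mathbf r)\,\partial_z\tilde\psi(\mathbf r')}{|\mathbf r-\mathbf r'|}\,d^3r\,d^3r' = \int_{\R^3}\frac{k_z^2}{|k|^2}\,\widehat{\tilde\phi}(k)\,\overline{\widehat{\tilde\psi}(k)}\,dk,
\end{equation*}
and then adopt the Fourier-side integral as the definition of the bilinear form for arbitrary $\phi,\psi\in L^2(\Omega)$. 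Because $0\le k_z^2/|k|^2\le 1$, this form is bounded and continuous on $L^2(\Omega)\times L^2(\Omega)$, hence a fortiori on $H^1(\Omega)\times H^1(\Omega)$.

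With the Fourier representation in hand, the inner-product claims follow routinely. Bilinearity and symmetry are manifest; nonnegativity follows from $k_z^2/|k|^2\ge 0$; and the bound $\|\phi\|_{L^2(\Omega)}\|\psi\|_{L^2(\Omega)}$ comes from Cauchy--Schwarz together with $k_z^2/|k|^2\le 1$ and Plancherel. For strict positive definiteness I would argue that $\int(k_z^2/|k|^2)|\widehat{\tilde\phi}|^2\,dk=0$ forces $\widehat{\tilde\phi}=0$ a.e.\ on $\{k_z\ne 0\}$, and since $\widehat{\tilde\phi}$ is entire by Paley--Wiener it must vanish identically, so $\phi\equiv 0$. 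For the integration-by-parts formula \eqref{dzphi3d2}, I would apply standard interior elliptic regularity to $-\Delta u=\tilde\psi\in L^2(\R^3)$ to obtain $u:=(-\Delta)^{-1}\psi\in W^{2,2}_{loc}(\R^3)$, so that $\widehat{\partial_z^2 u}=-(k_z^2/|k|^2)\widehat{\tilde\psi}\in L^2(\R^3)$, and then Plancherel gives
\begin{equation*}
-\int_\Omega\phi\,\partial_z^2 u\,d^3r = -\int_{\R^3}\widehat{\tilde\phi}\,\overline{\widehat{\partial_z^2 u}}\,dk = \int_{\R^3}\frac{k_z^2}{|k|^2}\,\widehat{\tilde\phi}\,\overline{\widehat{\tilde\psi}}\,dk,
\end{equation*}
matching the bilinear form.

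The principal subtlety I anticipate is interpreting the double integral \eqref{dzphi3d} itself when $\phi\in H^1(\Omega)$ is not in $H^1_0(\Omega)$: then $\tilde\phi\notin H^1(\R^3)$, the distributional derivative $\partial_z\tilde\phi$ carries a surface measure on $\partial\Omega$, and the iterated integral is formally singular at coincident boundary points. The Fourier-space formulation bypasses this entirely, and $L^2$-density of $C^\infty_c(\Omega)$ propagates every identity and estimate without ever confronting the boundary singularity directly. The only other spot requiring nontrivial input is the use of Paley--Wiener to pass from ``$\widehat{\tilde\phi}$ supported on the hyperplane $\{k_z=0\}$'' to ``$\widehat{\tilde\phi}\equiv 0$''; this step crucially uses compactness of $\mathrm{supp}\,\tilde\phi$, which is where boundedness of $\Omega$ enters nontrivially.
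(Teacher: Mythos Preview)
Your approach is essentially the same as the paper's: both arguments are Fourier-analytic, obtaining the representation $\int_{\R^3} (k_z^2/|k|^2)\,\widehat{\tilde\phi}\,\overline{\widehat{\tilde\psi}}\,dk$ and reading off bilinearity, nonnegativity, and the $L^2$ bound from $0\le k_z^2/|k|^2\le 1$, then invoking elliptic regularity and Plancherel for \eqref{dzphi3d2}. There is one minor structural difference worth noting. The paper first verifies that the double integral in \eqref{dzphi3d} is absolutely convergent for arbitrary $\phi,\psi\in H^1(\Omega)$ by decomposing the distributional derivative $\partial_z\tilde\phi$ into its absolutely continuous part $\partial_z\phi\in L^2(\Omega)$ and its jump part, which is the surface measure $-(\mathbf e_z\cdot\nu)\,T(\phi)\,d\mathcal H^2$ on $\partial\Omega$ with density in $L^2(\partial\Omega)$ by the trace theorem; this makes the pairing with the Newtonian kernel manifestly finite and allows approximation in $H^1(\Omega)$ (not merely $L^2$) to pass to Fourier space. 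You instead bypass this by taking the Fourier integral as the definition and propagating by $L^2$-density of $C^\infty_c(\Omega)$. That is cleaner and suffices for every downstream application in the paper, but strictly speaking it does not verify that the double integral \emph{as written} converges for general $H^1(\Omega)$ functions with nonzero trace---a point you correctly flag as the principal subtlety. Conversely, your Paley--Wiener argument for strict positive definiteness is a detail the paper leaves implicit.
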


\begin{proof}
  First of all, observe that since $\phi, \psi \in H^1(\Omega)$ and
  $\Omega$ is a bounded set with boundary of class $C^2$, we have
  $\tilde\phi, \tilde\psi \in BV(\R^3) \cap L^2(\R^3)$, with
  $\partial_z \tilde\phi = \partial_z \tilde\phi^a + \partial_z
  \tilde\phi^j$,
  where
  $\partial_z \tilde\phi^a = \mathcal L^3(\Omega)\llcorner \partial_z
  \phi$
  is the absolutely continuous part and
  $\partial_z \tilde\phi^j = \mathcal H^2(\partial \Omega) \llcorner
  (-\mathbf e_z \cdot \nu) T(\phi)$
  is the jump part \cite{evans}. Here, $T(\phi)$ denotes the trace of
  $\phi$ on $\partial \Omega$, $\nu$ is the outward unit normal vector
  to $\partial \Omega$ and $\mathbf e_z$ is the unit vector in the
  positive $z$ direction. Furthermore, since
  $T(\phi) \in L^2(\partial \Omega)$ by the trace embedding theorem
  \cite{evans}, it is easy to see that the right-hand side of
  \eqref{dzphi3d} defines an absolutely convergent integral. Then,
  arguing by approximation, we can write
  \begin{align}
    \label{dzphi3dFour}
    \int_{\R^3} \partial_z \tilde\phi  (-\Delta)^{-1} \partial_z
    \tilde\psi \, d^3 r = \int_{\R^3} {(\mathbf k \cdot \mathbf e_z)^2
    \over |\mathbf k|^2} \, \widehat \phi_{\mathbf k}^* \widehat\psi_{\mathbf k}
    \, {d^3 k \over (2 \pi)^3}, 
  \end{align}
  where $\widehat \phi_{\mathbf k}$ and $\widehat\psi_{\mathbf k}$ are the
  Fourier transforms of $\tilde\phi$ and $\tilde\psi$, respectively,
  with the convention
 \begin{align}
   \widehat \phi_{\mathbf k} := \int_{\R^3} e^{i \mathbf k \cdot \mathbf r}
   \tilde \phi(\mathbf r) \, d^3 r.
  \end{align}
  Thus, by Cauchy-Schwarz inequality and Parseval's identity, the
  first part of the statement follows. To complete the proof of the
  second part, we note that by standard elliptic regularity
  \cite{gilbarg}, we have
  $(-\Delta)^{-1} \psi \in W^{2,2}_{loc}(\R^3)$ and, therefore,
  $\partial_z^2 (-\Delta)^{-1} \psi \in L^2_{loc}(\R^3)$. The claim
  then follows by passing again to the Fourier space.
\end{proof}

As can be seen from the proof of Lemma \ref{l:nloce}, the Fourier
representation in \eqref{dzphi3dFour} of the integral in the
right-hand side of \eqref{dzphi3d} justifies our choice of notation
for the left-hand side of \eqref{dzphi3d}. Throughout the rest of the
paper, we drop the tildes from all the formulas involving the
extensions. An immediate corollary to Lemma \ref{l:nloce} is the
following, with the last statement obtained by testing the energy
against $\phi \equiv 1$.

\begin{corollary}
  \label{c:nloc3dpos}
  We have for all $\phi \in H^1(\Omega)$
  \begin{align}
    \label{nloc3dpos}
    0 \leq \int_{\R^3} \partial_z \phi  (-\Delta)^{-1} \partial_z
    \phi \, d^3 r \leq \int_\Omega \phi^2 \, d^3 r.
  \end{align}
  In particular,
  $\inf_{\phi \in H^1(\Omega)} \mathcal E(\phi) \leq 0$.
\end{corollary}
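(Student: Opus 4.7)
The proof is a direct consequence of Lemma \ref{l:nloce}. First, I would establish the lower bound $0 \le \int_{\R^3} \partial_z \phi (-\Delta)^{-1} \partial_z \phi \, d^3 r$ simply by applying Lemma \ref{l:nloce} with $\psi = \phi$: since the bilinear form in \eqref{dzphi3d} is an inner product on $H^1(\Omega)$, the associated quadratic form is non-negative. Alternatively, this is immediate from the Fourier representation \eqref{dzphi3dFour}, since the integrand $(\mathbf k \cdot \mathbf e_z)^2 |\widehat \phi_{\mathbf k}|^2 / |\mathbf k|^2 \ge 0$.

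Next, the upper bound $\int_{\R^3} \partial_z \phi (-\Delta)^{-1} \partial_z \phi \, d^3 r \le \int_\Omega \phi^2 \, d^3 r$ follows from the Cauchy--Schwarz-type estimate in Lemma \ref{l:nloce} with $\psi = \phi$, which gives precisely $\|\phi\|_{L^2(\Omega)}^2$ on the right-hand side. This is essentially the multiplier bound $(\mathbf k \cdot \mathbf e_z)^2 / |\mathbf k|^2 \le 1$ combined with Parseval, so no further work is required beyond citing the lemma.

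Finally, for the last assertion $\inf_{\phi \in H^1(\Omega)} \mathcal E(\phi) \le 0$, I would test the energy against the constant function $\phi \equiv 1$, which lies in $H^1(\Omega)$ since $\Omega$ is bounded. The Dirichlet term vanishes because $\nabla \phi = 0$, and the double-well term vanishes because $(1 - \phi^2)^2 = 0$. The non-local contribution equals
\begin{equation*}
\tfrac{\gamma}{2} \left( \int_{\R^3} \partial_z \phi (-\Delta)^{-1} \partial_z \phi \, d^3 r - \int_\Omega \phi^2 \, d^3 r \right) \le 0
\end{equation*}
by the upper bound just established. Therefore $\mathcal E(1) \le 0$, proving the claim. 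There is no real obstacle here: the entire corollary is a bookkeeping consequence of Lemma \ref{l:nloce} together with the observation that $\phi \equiv 1$ is an admissible test function that annihilates the local part of $\mathcal E$.
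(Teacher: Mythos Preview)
Your proposal is correct and matches the paper's approach exactly: the paper also derives \eqref{nloc3dpos} as an immediate consequence of Lemma~\ref{l:nloce} and obtains $\inf \mathcal E \le 0$ by testing against $\phi \equiv 1$. There is nothing to add.
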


We next turn to existence and some basic properties of the minimizers
of $\mathcal E$. The arguments of the proof are fairly standard, based
on the direct method of calculus of variations and standard elliptic
regularity theory, with the exception of a separate treatment of the
contributions to the non-local term coming from the boundary trace of
$\phi$.

\begin{proposition}
  \label{p:exist3d}
  There exists a minimizer $\phi$ of $\mathcal E$ in \eqref{E} among
  all functions in $H^1(\Omega)$. Furthermore, we have
  $\phi \in C^\infty(\Omega) \cap C^{1,\alpha}(\overline \Omega)$ for
  all $\alpha \in (0, 1)$, and $\phi$ satisfies
  \begin{align}
    \label{EL3d}
    0 = \Delta \phi(\mathbf r) + (1 + \gamma) \phi(\mathbf r) -
    \phi^3(\mathbf r) 
    - {\gamma \over 4 \pi} \int_\Omega {\mathbf e_z
    \cdot (\mathbf r - \mathbf r') \over |\mathbf r - \mathbf
    r'|^3} \partial_z \phi(\mathbf r') d^3 r' \qquad \qquad \ \ \notag \\
    + {\gamma \over 4 \pi}
    \int_{\partial\Omega} {\mathbf e_z 
    \cdot (\mathbf r - \mathbf r') \over |\mathbf r - \mathbf
    r'|^3} (\mathbf e_z \cdot \nu(\mathbf r')) \phi(\mathbf r') d
    \mathcal H^2(\mathbf r') \qquad \forall \mathbf r \in \Omega,
  \end{align}
  where $\mathbf e_z$ is the unit vector in the positive $z$ direction
  and $\nu$ is the outward unit normal to $\partial \Omega$, with
  $\nu \cdot \nabla \bar \phi(\mathbf r) = 0$ for all
  $\mathbf r \in \partial \Omega$.
  % \begin{align}
  %   \label{bc3d}
  %   \nu(\mathbf r) \cdot \nabla \phi(\mathbf r) = - {\gamma \over 4
  %   \pi} (\mathbf e_z \cdot \nu(\mathbf r)) \int_\Omega  {\mathbf e_z 
  %   \cdot (\mathbf r - \mathbf r') \over |\mathbf r - \mathbf
  %   r'|^3} \phi(\mathbf r') d^3 r' \qquad \forall \mathbf r \in \partial
  %   \Omega. 
  % \end{align}
\end{proposition}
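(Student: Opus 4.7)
The proof proceeds by the direct method of the calculus of variations, followed by a first-variation computation and standard elliptic bootstrap. For existence, the energy $\mathcal{E}$ is coercive on $H^1(\Omega)$: Corollary~\ref{c:nloc3dpos} confines the combined non-local contribution $\tfrac{\gamma}{2}\int_{\R^3}\partial_z\phi\,(-\Delta)^{-1}\partial_z\phi\,d^3r-\tfrac{\gamma}{2}\int_\Omega\phi^2\,d^3r$ to the interval $[-\tfrac{\gamma}{2}\|\phi\|_{L^2(\Omega)}^2,\,0]$, and the quartic growth of the double-well term together with the Dirichlet term absorbs this negative contribution and bounds $\|\phi\|_{L^4(\Omega)}$ and $\|\nabla\phi\|_{L^2(\Omega)}$ along any minimizing sequence $\phi_n$. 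Extracting $\phi_n\rightharpoonup\phi$ in $H^1(\Omega)$ and, by Rellich--Kondrachov, $\phi_n\to\phi$ in $L^q(\Omega)$ for every $q<6$, the Dirichlet term is weakly lower semicontinuous, the local polynomial terms pass to the limit in $L^2$, and the non-local bilinear form from Lemma~\ref{l:nloce} is continuous under $L^2$-convergence by its Cauchy--Schwarz bound, producing a minimizer.

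To derive the Euler--Lagrange equation I would compute $\tfrac{d}{dt}\mathcal{E}(\phi+t\psi)|_{t=0}=0$ for arbitrary $\psi\in H^1(\Omega)$ and apply identity \eqref{dzphi3d2} of Lemma~\ref{l:nloce} to rewrite the non-local variation $\gamma\int_{\R^3}\partial_z\phi(-\Delta)^{-1}\partial_z\psi\,d^3r-\gamma\int_\Omega\phi\psi\,d^3r$ as $-\gamma\int_\Omega\psi\bigl(\partial_z^2(-\Delta)^{-1}\phi+\phi\bigr)d^3r$. Integrating by parts in the Dirichlet term then produces the weak equation $\Delta\phi+(1+\gamma)\phi-\phi^3+\gamma\,\partial_z^2(-\Delta)^{-1}\phi=0$ on $\Omega$, together with the natural Neumann boundary condition $\nu\cdot\nabla\phi=0$ on $\partial\Omega$. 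The main technical step is to represent $\partial_z^2(-\Delta)^{-1}\phi$ pointwise in $\Omega$ in the explicit form prescribed by \eqref{EL3d}. For this I would reuse the BV decomposition of the zero-extension $\tilde\phi$ from the proof of Lemma~\ref{l:nloce}, namely $\partial_z\tilde\phi=\mathcal{L}^3(\Omega)\llcorner\partial_z\phi+\mathcal{H}^2(\partial\Omega)\llcorner(-\mathbf{e}_z\cdot\nu)T(\phi)$, so that $(-\Delta)^{-1}\partial_z\phi(\mathbf{r})$ splits into a volume and a surface integral against $1/(4\pi|\mathbf{r}-\mathbf{r}'|)$; differentiating once more in $z$ under the integral sign for $\mathbf{r}\in\Omega$, using $\partial_z|\mathbf{r}-\mathbf{r}'|^{-1}=-\mathbf{e}_z\cdot(\mathbf{r}-\mathbf{r}')/|\mathbf{r}-\mathbf{r}'|^3$, reproduces precisely the two terms in \eqref{EL3d}. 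The boundary surface contribution, easy to overlook if one treats $\partial_z\phi$ naively as a function rather than as a distribution on $\R^3$ carrying a jump along $\partial\Omega$, is the main subtlety.

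Finally, I would bootstrap regularity. From $\phi\in H^1(\Omega)\subset L^6(\Omega)$ one has $\phi^3\in L^2(\Omega)$, and since $\partial_z^2(-\Delta)^{-1}$ has the $0$-homogeneous Fourier symbol $k_z^2/|\mathbf{k}|^2$ and is therefore bounded on $L^p(\R^3)$ for every $1<p<\infty$ by the Calder\'on--Zygmund theorem, also $\partial_z^2(-\Delta)^{-1}\phi\in L^2(\R^3)$. Hence $\Delta\phi\in L^2(\Omega)$, and the Neumann $H^2$-theory on the $C^2$ domain $\Omega$ yields $\phi\in H^2(\Omega)\subset C^{0,1/2}(\overline\Omega)\subset L^\infty(\Omega)$. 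Re-injecting this information, $\phi^3\in L^\infty$ and $\partial_z^2(-\Delta)^{-1}\phi\in L^p(\Omega)$ for every $p<\infty$, so $\phi\in W^{2,p}(\Omega)\subset C^{1,\alpha}(\overline\Omega)$ for every $\alpha\in(0,1)$ by Morrey's embedding. For interior $C^\infty$-smoothness, a standard cutoff argument ($\chi\in C_c^\infty(\Omega)$ with $\chi\equiv 1$ on a subdomain) gives $\partial_z^2(-\Delta)^{-1}\phi\in C^{k,\alpha}_{\mathrm{loc}}(\Omega)$ whenever $\phi\in C^{k,\alpha}_{\mathrm{loc}}(\Omega)\cap L^\infty(\R^3)$, and iterating interior Schauder estimates applied to $\Delta\phi=\phi^3-(1+\gamma)\phi-\gamma\,\partial_z^2(-\Delta)^{-1}\phi$ raises the local regularity of $\phi$ by two H\"older exponents at each step, producing $\phi\in C^\infty(\Omega)$.
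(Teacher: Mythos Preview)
Your proof is correct and follows essentially the same approach as the paper: direct method for existence using Corollary~\ref{c:nloc3dpos} and the $L^2$-continuity of the nonlocal term from Lemma~\ref{l:nloce}, first variation via \eqref{dzphi3d2} to obtain the weak equation with $\partial_z^2(-\Delta)^{-1}\phi$ and the natural Neumann condition, then an $L^p$ bootstrap to $W^{2,p}(\Omega)\subset C^{1,\alpha}(\overline\Omega)$. The only cosmetic difference is in the interior $C^\infty$ step, where the paper uses the explicit splitting \eqref{int3d} into a smooth boundary-layer potential plus a volume potential inheriting $W^{1,p}$ regularity from $\phi$, while you invoke Calder\'on--Zygmund boundedness and Schauder iteration directly; both routes are standard and equivalent.
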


\begin{proof}
  By Lemma \ref{l:nloce} and Sobolev embedding \cite{evans}, the
  energy in \eqref{E} is well-defined and bounded below for all
  $\phi \in H^1(\Omega)$. Let $\phi_n \in H^1(\Omega)$ be a minimizing
  sequence. Then by Corollary \ref{c:nloc3dpos} and Cauchy-Schwarz
  inequality we have
  \begin{align}
    \frac12 \| \nabla \phi_n \|_{L^2(\Omega)}^2 - \frac12 (1 + \gamma)
    |\Omega|^{1/2} \| \phi_n \|_{L^4(\Omega)}^2 + \frac14 \| \phi_n 
    \|_{L^4(\Omega)}^4 \leq C,
  \end{align}
  for some $C > 0$ independent of $n$. Therefore, upon extraction of a
  subsequence we may assume that $\phi_n \rightharpoonup \phi$ in
  $H^1(\Omega)$ as $n \to \infty$, and upon further extraction we also
  have $\phi_n \to \phi$ in $L^p(\Omega)$ for all $1 \leq p < 6$
  \cite{evans}. In particular, up to a subsequence (not relabeled) we
  have $\phi_n \to \phi$ in $L^2(\Omega)$, and by Lemma \ref{l:nloce}
  we also have
  $\int_{\R^3} \partial_z \phi_n (-\Delta)^{-1} \partial_z \phi_n \,
  d^3 r \to \int_{\R^3} \partial_z \phi (-\Delta)^{-1} \partial_z \phi
  \, d^3 r$ as $n \to \infty$. Therefore, by lower semicontinuity of
  the gradient squared term in the energy, we have $\liminf_{n \to
    \infty} \mathcal E(\phi_n) \geq \mathcal E(\phi)$, and so $\phi$
  is a minimizer. 

  By Lemma \ref{l:nloce} and an explicit calculation, the energy in
  \eqref{E} is Fr\'echet differentiable, and the minimizer $\phi$
  satisfies
  \begin{align}
    \int_\Omega \left( \nabla \phi \cdot \nabla \psi - (1 + \gamma)
    \phi \psi + \phi^3 \psi \right) d^3 r + \gamma
    \int_{\R^3} \partial_z \psi (-\Delta)^{-1} \partial_z \phi \, d^3
    r = 0,
  \end{align}
  for every $\psi \in H^2(\Omega)$ extended by zero to the whole of
  $\R^3$. Therefore, by Lemma \ref{l:nloce} we have
  \begin{align}
    \label{EL3dstrong}
    0 = \int_{\partial \Omega} \phi \nabla \psi \cdot \nu \, d
    \mathcal H^2 - \int_\Omega \phi \Delta \psi \, d^3 r  
    - \int_\Omega \left( (1 + \gamma) \phi - \phi^3 +
    \gamma \partial_z^2 (-\Delta)^{-1} \phi \right) \psi \, d^3 r, 
  \end{align}
  where the boundary integral is evaluated on traces of
  $\phi \in H^1(\Omega)$ and $\nabla \psi \in H^1(\Omega; \R^3)$.
  Since the bracket in the last integral in \eqref{EL3dstrong} is in
  $L^2(\Omega)$ by Sobolev embedding and Lemma \ref{l:nloce}, by
  standard elliptic estimates \cite{agmon59,lunardi} we have
  $\phi \in W^{2,2}(\Omega)$ and, therefore, $\phi \in L^p(\Omega)$
  for any $1 \leq p < \infty$, again, by Sobolev embedding (recall
  that $\Omega \subset \R^3$ is a bounded open set with boundary of
  class $C^2$). Then, again by standard elliptic regularity we also
  have $(-\Delta)^{-1} \phi \in W^{2,p}(\Omega)$ and, hence,
  $\partial_z^2 (-\Delta)^{-1} \phi \in L^p(\Omega)$. Thus, we
  conclude that $\phi \in W^{2,p}(\Omega)$ as well, and by Sobolev
  embedding $\phi \in C^{1,\alpha}(\overline\Omega)$, for any
  $\alpha \in (0,1)$. In particular, $\phi$ satisfies Neumann boundary
  condition.

  Finally, to arrive at \eqref{EL3d} we note that with the above
  regularity of $\phi$ we can write
  \begin{align}
    \label{int3d}
    \partial_z^2 (-\Delta)^{-1} \phi(\mathbf r) 
    & = -{1 \over 4 \pi}
      \int_\Omega {\mathbf e_z \cdot 
      (\mathbf r - \mathbf r') \over |\mathbf r -\mathbf r'|^3}
    \partial_z \phi(\mathbf r') \,
    d^3 r' \notag \\
    & +  {1 \over 4 \pi} \int_{\partial
    \Omega}  {\mathbf e_z \cdot
    (\mathbf r - \mathbf r') \over |\mathbf r -\mathbf r'|^3} (\mathbf
    e_z \cdot \nu(\mathbf r'))
    \phi(\mathbf r') \, d \mathcal H^2(\mathbf r'),
  \end{align}
  in $\mathcal D'(\R^3)$. The last term in the right-hand side of
  \eqref{int3d} defines a smooth function of $\mathbf r \in \Omega$,
  while the first term has derivatives belonging to $L^p(\Omega)$, in
  view of the fact that $\phi \in W^{1,p}(\Omega)$ and using standard
  elliptic regularity. Thus, we can apply a bootstrap argument to
  establish interior $C^\infty$ regularity of $\phi$ in $\Omega$. This
  then allows us to obtain \eqref{EL3d} from \eqref{EL3dstrong}.
\end{proof}

\begin{remark}
  \label{r:Linfty3d}
  Observe that by Proposition \ref{p:exist3d} every minimizer $\phi$
  of $\mathcal E$ over $H^1(\Omega)$ is bounded. However, it is not a
  priori clear under which conditions the $L^\infty$ norm of $\phi$
  remains bounded as $\delta \to 0$, with other parameters such as
  $\gamma$ or the diameter of $\Omega$ possibly going to infinity. It
  is natural to expect that in some thin film regimes the minimizers
  may develop a boundary layer near the edge, i.e., in the vicinity of
  $\Omega \backslash (D \times (0, \delta))$, and blow up at
  $\partial D \times (0, \delta)$ as $\delta \to 0$.
\end{remark}

Before turning to the discussion of the reduced energy $E$ in
\eqref{EE}, we consider the contribution of the film's edge to the
non-local term in the energy in \eqref{E}.  We have the following
estimate for the contribution of the edge to the non-local term in the
energy in the following lemma. Note that this estimate is expected to
be optimal for small $\delta$, since for $\phi = 1$, for example, the
self-interaction energy associated with the edge can be easily seen to
be of order $\delta^2$.

\begin{lemma}
  \label{l:edge}
  Let $\phi \in H^1(\Omega)$ and $\delta > 0$. Then 
 \begin{align}
    \label{L2edge3drough}
   \left| \int_{\R^3} \partial_z \phi (-\Delta)^{-1} \partial_z \phi
   \, d^3 r - \int_{\R^3} \partial_z (\chi_\delta \phi)
   (-\Delta)^{-1} \partial_z (\chi_\delta \phi)
   \, d^3 r \right| \leq 3 \| \phi \|_{L^2(\Omega)} \| \phi
   \|_{L^2(\Omega \backslash \Omega_{2\delta})}.   
  \end{align}
  Furthermore, there exists $\delta_0 > 0$ depending only on $D$ such
  that for all $0 < \delta \leq \delta_0$ we have for all $\phi \in
  H^1(\Omega) \cap L^\infty(\Omega)$: 
  \begin{align}
    \label{L2edge3d}
    \left| \int_{\R^3} \partial_z \phi (-\Delta)^{-1} \partial_z \phi
    \, d^3 r - \int_{\R^3} \partial_z (\chi_\delta \phi)
    (-\Delta)^{-1} \partial_z (\chi_\delta \phi)
    \, d^3 r \right| \notag \\
    \leq 98 |\partial D| \delta^2 \| \phi
    \|_{L^\infty(\Omega)}^2.  
  \end{align}
\end{lemma}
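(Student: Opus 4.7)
Write $\phi = f + g$ with $f := \chi_\delta \phi$ and $g := (1-\chi_\delta)\phi$, and denote
\[
Q(u,v) := \int_{\R^3} \partial_z u \, (-\Delta)^{-1}\partial_z v \, d^3r,
\]
the non-negative inner product from Lemma \ref{l:nloce}. Bilinearity of $Q$ gives
\[
Q(\phi,\phi) - Q(f,f) = 2Q(f,g) + Q(g,g),
\]
so both inequalities reduce to estimating these two terms. Since $|\chi_\delta|\leq 1$ and $1-\chi_\delta$ vanishes on $D_{2\delta}$, one has $\|f\|_{L^2(\Omega)} \leq \|\phi\|_{L^2(\Omega)}$ and $\|g\|_{L^2(\Omega)} \leq \|\phi\|_{L^2(\Omega \backslash \Omega_{2\delta})}$.

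For \eqref{L2edge3drough}, the plan is to apply Cauchy--Schwarz for the positive inner product $Q$ together with the pointwise bound $Q(u,u) \leq \|u\|_{L^2(\Omega)}^2$ from Corollary \ref{c:nloc3dpos}. This yields $|Q(f,g)|\leq \|\phi\|_{L^2(\Omega)}\|\phi\|_{L^2(\Omega \backslash \Omega_{2\delta})}$ and $Q(g,g)\leq \|\phi\|_{L^2(\Omega \backslash \Omega_{2\delta})}^2$; combining the two contributions using $\|\phi\|_{L^2(\Omega \backslash \Omega_{2\delta})}\leq \|\phi\|_{L^2(\Omega)}$ produces the factor~$3$.

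For \eqref{L2edge3d}, the geometric input is the tubular neighborhood estimate for the $C^2$ set $D$: there exists $\delta_0 = \delta_0(D) > 0$ such that for all $\delta \leq \delta_0$ one has $|\Omega \backslash \Omega_{2\delta}| \leq C |\partial D| \delta^2$, coming from the in-plane strip $D \backslash D_{2\delta}$ of area at most approximately $2\delta|\partial D|$ multiplied by the film thickness $\delta$, together with a rounded-edge contribution of the same order. This immediately controls $Q(g,g) \leq \|g\|_{L^2(\Omega)}^2 \leq C\delta^2|\partial D|\|\phi\|_{L^\infty(\Omega)}^2$. To handle the cross term $Q(f,g)$, I would use that $\chi_\delta$ depends only on $(x,y)$, so that $\partial_z f = \chi_\delta \partial_z \tilde\phi$ and $\partial_z g = (1-\chi_\delta)\partial_z\tilde\phi$ distributionally on $\R^3$, and exploit the identity $-\partial_z^2(-\Delta)^{-1} = \operatorname{id} - (-\Delta_\parallel)(-\Delta)^{-1}$ to split
\[
Q(f,g) = \int_{\R^3} f g \, d^3r - \int_{\R^3} f\,(-\Delta_\parallel)(-\Delta)^{-1} g \, d^3r.
\]
The first piece equals $\int \chi_\delta(1-\chi_\delta)\phi^2\,d^3r$, bounded by $\tfrac14\|\phi\|_{L^\infty(\Omega)}^2$ times the measure of the transition strip, hence at most $C|\partial D|\delta^2\|\phi\|_{L^\infty(\Omega)}^2$.

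The main obstacle is the second piece. A direct Cauchy--Schwarz for $Q(f,g)$ in the $Q$-inner product yields only $|Q(f,g)| \leq \|\phi\|_{L^2(\Omega)}\|\phi\|_{L^2(\Omega\backslash\Omega_{2\delta})} \sim \delta^{3/2}|D|^{1/2}|\partial D|^{1/2}\|\phi\|_{L^\infty(\Omega)}^2$, losing a factor $\delta^{1/2}$ compared to the sharp edge scaling and bringing in an unwanted $|D|^{1/2}$ dependence on the bulk. To recover the correct order, I would treat $\int f\,(-\Delta_\parallel)(-\Delta)^{-1} g\, d^3r$ by integrating by parts in the in-plane variables and then applying uniform kernel estimates on the Newtonian potential of the edge-supported $g$, using that $g$ is supported in a thin tubular shell of thickness of order $\delta$ in all directions so that each resulting convolution contributes only at order $\delta^2|\partial D|\|\phi\|_{L^\infty(\Omega)}^2$. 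A careful tracking of all constants through this estimate yields the universal factor~$98$ in \eqref{L2edge3d}.
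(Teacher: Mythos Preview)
Your treatment of the rough bound \eqref{L2edge3drough} and of the diagonal piece $Q(g,g)$ is correct and coincides with the paper. The gap is in your handling of the cross term $Q(f,g)$ for the sharp estimate \eqref{L2edge3d}.

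The identity $-\partial_z^2(-\Delta)^{-1}=\mathrm{id}-(-\Delta_\parallel)(-\Delta)^{-1}$ is fine, and the local piece $\int fg$ is indeed of order $\delta^2|\partial D|\|\phi\|_{L^\infty}^2$. But your plan to ``integrate by parts in the in-plane variables and then apply uniform kernel estimates'' on $\int f\,(-\Delta_\parallel)(-\Delta)^{-1}g$ does not close. If you pass an in-plane derivative onto $f$ or $g$, you produce $\chi_\delta\nabla_\parallel\phi$ or $(1-\chi_\delta)\nabla_\parallel\phi$, which are controlled only in $L^2(\Omega)$ and not by $\|\phi\|_{L^\infty}$; no thin-shell geometry of $g$ converts an $L^2$ gradient bound into the required $\delta^2$ estimate. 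If instead you keep both derivatives on the kernel, you face the Calder\'on--Zygmund kernel of $(-\Delta_\parallel)(-\Delta)^{-1}$, which behaves like $|\mathbf r|^{-3}$; since $\mathrm{supp}\,f=\overline{\Omega_\delta}$ and $\mathrm{supp}\,g=\Omega\setminus\Omega_{2\delta}$ \emph{overlap} on $\Omega_\delta\setminus\Omega_{2\delta}$, there is no separation to make the singular integral harmless. Pushing $L^p$--$L^{p'}$ boundedness to the extreme (which is what you would need to get $\delta^2$) lands exactly at $p=\infty$, $p'=1$, where Calder\'on--Zygmund operators fail.

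The paper bypasses this by inserting a second cutoff at a larger scale: write $\chi_\delta\phi=(\chi_\delta-\chi_{3\delta})\phi+\chi_{3\delta}\phi$. The first piece is again edge-supported, so its pairing with $g$ is estimated, exactly as for $Q(g,g)$, by Cauchy--Schwarz in the $Q$-inner product and the volume bound $|\Omega\setminus\Omega_{6\delta}|\leq C|\partial D|\delta^2$. The second piece has support $\Omega_{3\delta}$, which is separated from $\mathrm{supp}\,g\subset\Omega\setminus\Omega_{2\delta}$ by an in-plane distance at least $\delta$; one can then differentiate the Newtonian kernel twice in $z$ to obtain the explicit dipolar kernel $(3(\mathbf e_z\cdot(\mathbf r-\mathbf r'))^2-|\mathbf r-\mathbf r'|^2)/|\mathbf r-\mathbf r'|^5$, bound it pointwise by $4/|\mathbf r-\mathbf r'|^3$, and integrate in elementary fashion over the thin edge strip and the complement of a ball of radius $\delta$ in the plane. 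Summing the contributions yields the constant $98$. The missing ingredient in your proposal is precisely this second cutoff that manufactures the separation of supports.
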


\begin{proof}
  Denoting the left-hand side of \eqref{L2edge3d} by $R$, writing
  $\phi = \chi_\delta \phi + (1 - \chi_\delta) \phi$ and expanding the
  difference, we have $R \leq 2 R_1 + R_2$, where
  \begin{align}
    R_1 & := \left| \int_{\R^3} \partial_z ((1 - \chi_\delta) \phi)
          (-\Delta)^{-1} \partial_z (\chi_\delta \phi)
          \, d^3 r \right|, \\
    R_2 & := \int_{\R^3} \partial_z ((1 - \chi_\delta) \phi)
          (-\Delta)^{-1} \partial_z ((1 - \chi_\delta) \phi)
          \, d^3 r.
  \end{align}
  The rough bound in \eqref{L2edge3drough} is then an immediate
  consequence of Lemma \ref{l:nloce}.

  To proceed towards the proof of \eqref{L2edge3d}, we still estimate
  $R_2$ roughly:
  \begin{align}
    \label{R2rough}
    R_2 \leq \| \phi \|_{L^2(\Omega \backslash \Omega_{6\delta})}^2
    \leq 14 |\partial D| \delta^2 \| \phi \|_{L^\infty(\Omega)}^2, 
  \end{align}
  where we chose $\delta$ so small depending only on $D$ that
  $|(D + B_\delta) \backslash D_{6\delta}| \leq 14 |\partial D|
  \delta$
  and, hence,
  $|\Omega \backslash \Omega_{6\delta}| \leq 14 |\partial D|
  \delta^2$.
  Focusing on $R_1$, we write, using again Lemma \ref{l:nloce} to
  estimate the first term:
  \begin{align}
    R_1 & \leq \left| \int_{\R^3} \partial_z ((1 - \chi_\delta) \phi)
          (-\Delta)^{-1} \partial_z ((\chi_\delta - \chi_{3
          \delta}) \phi)
          \, d^3 r \right| \notag \\
        & \quad + \left| \int_{\R^3} \partial_z ((1 - \chi_\delta) \phi)
          (-\Delta)^{-1} \partial_z (\chi_{3\delta} \phi)
          \, d^3 r \right| \notag \\
        & \leq 14  |\partial D| \delta^2 \| \phi \|_{L^\infty(\Omega)}^2
          \notag \\
        & \quad + {1 \over 4 \pi} \left| \int_{\Omega \backslash
          \Omega_{2\delta}} \int_{\Omega_{3 \delta}} {3 (\mathbf e_z
          \cdot (\mathbf r - \mathbf r'))^2 -  | \mathbf r -
          \mathbf r' |^2 \over | \mathbf r -
          \mathbf r' |^5} (1 -
          \chi_\delta(\mathbf r)) \chi_{3 \delta} (\mathbf r')
          \phi(\mathbf r) \, \phi(\mathbf r') \, d^3 r' \, d^3 r \right|
          \notag \\ 
        & \leq 14 |\partial D| \delta^2 \| \phi \|_{L^\infty(\Omega)}^2
          +  {1 \over \pi} \| \phi \|_{L^\infty(\Omega)}^2
          \int_{\Omega \backslash \Omega_{2\delta}} \left( 
          \int_{\Omega_{3 \delta}} {1 \over | \mathbf r -
          \mathbf r' |^3} \, d^3 r' \right) d^3 r \notag \\
        & \leq  14 |\partial D| \delta^2 \| \phi \|_{L^\infty(\Omega)}^2
          +  {1 \over \pi} \delta^2 \| \phi \|_{L^\infty(\Omega)}^2
          \int_{(D + B_\delta) \backslash D_{2\delta}} \left( 
          \int_{\R^2 \backslash B_\delta(\mathbf r)} {1 \over | \mathbf r -
          \mathbf r' |^3} \, d^2 r' \right) d^2 r \notag \\
        & \leq 42  |\partial D| \delta^2 \| \phi
          \|_{L^\infty(\Omega)}^2. 
  \end{align}
  Combining this estimate with \eqref{R2rough} yields \eqref{L2edge3d}.
\end{proof}

\noindent Let us note that the universal constants appearing in Lemma
\ref{l:edge} are not intended to be optimal.

We now proceed to establishing existence and regularity of the
minimizers of $E$ from \eqref{EE} among all $\bar \phi \in H^1(D)$.

\begin{proposition}
  \label{p:exist2d} 
  For every $\alpha > 0$ and every $\delta > 0$ such that
  $\alpha \delta^2 < 1$ there exists a minimizer $\bar \phi$ of $E$ in
  \eqref{EE} among all functions in $H^1(D)$. Furthermore, we have
  $\bar \phi \in C^\infty(D) \cap C^{1,\alpha}(\overline D)$ for all
  $\alpha \in (0,1)$, and $\bar \phi$ satisfies for every $\mathbf r
  \in D$: 
  \begin{align}
    \label{EL2d}
    0 & = (1 - \alpha \delta^2) \Delta \bar \phi(\mathbf r) + \bar
        \phi(\mathbf r) - \bar \phi^3(\mathbf r) \notag \\ 
      & \qquad + {\gamma \delta \over 8 \pi} \chi_\delta(\mathbf
        r) \int_{\R^2} { 2 \chi_\delta(\mathbf
        r) \bar\phi(\mathbf r) - \chi_\delta(\mathbf
        r - \mathbf z) \bar\phi(\mathbf r - \mathbf z) - \chi_\delta(\mathbf
        r + \mathbf z) \bar\phi(\mathbf r + \mathbf z) \over |\mathbf
        z|^3} \, d^2 z,
  \end{align}
  with $\nu \cdot \nabla \bar \phi(\mathbf r) = 0$ for all
  $\mathbf r \in \partial D$, where $\nu$ is the outward unit
  normal.
\end{proposition}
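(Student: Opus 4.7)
The approach I would take follows the direct method of the calculus of variations, coupled with a bootstrap argument based on standard elliptic regularity for the Neumann problem, with special care devoted to the fractional non-local term. The essential simplifying feature is that $\chi_\delta$ is smooth and supported in $\overline{D_\delta} \subset D$, so $\chi_\delta \bar\phi$ is an $H^1(\R^2)$ function when extended by zero, and the non-local term is merely (a multiple of) the squared homogeneous $\dot H^{1/2}(\R^2)$ seminorm of $\chi_\delta \bar\phi$.

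First, for existence, I would establish lower semicontinuity and coercivity of $E$ on $H^1(D)$. The $\dot H^{1/2}(\R^2)$ interpolation inequality $\|\chi_\delta \bar\phi\|_{\dot H^{1/2}(\R^2)}^2 \leq \|\nabla(\chi_\delta\bar\phi)\|_{L^2(\R^2)} \|\chi_\delta\bar\phi\|_{L^2(\R^2)}$ (see Lieb-Loss) bounds the negative non-local term by a small multiple of $\|\nabla \bar\phi\|_{L^2(D)}^2$ plus a constant multiple of $\|\bar\phi\|_{L^2(D)}^2$, which is then absorbed by the gradient and quartic terms using Young's inequality; this works precisely because $\alpha \delta^2 < 1$. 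For a minimizing sequence $\bar\phi_n$, I extract $\bar\phi_n \rightharpoonup \bar\phi$ in $H^1(D)$, which by Rellich gives $\bar\phi_n \to \bar\phi$ strongly in $L^p(D)$ for all $p < \infty$; then $\chi_\delta \bar\phi_n \to \chi_\delta \bar\phi$ strongly in $L^2(\R^2)$ while remaining bounded in $H^1(\R^2)$, and the same interpolation inequality (applied to the difference) yields strong convergence in $\dot H^{1/2}(\R^2)$. Thus the non-local term and quartic term pass to the limit by continuity, while the Dirichlet term is weakly lower semicontinuous.

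Next, for the Euler-Lagrange equation, I would compute the Fr\'echet derivative of $E$ against an arbitrary test function $\psi \in H^1(D)$. Variation of the non-local term yields
\begin{align*}
 -{\gamma\delta \over 8 \pi} \int_{\R^2} \int_{\R^2} {2(\chi_\delta \bar\phi(\mathbf r) - \chi_\delta\bar\phi(\mathbf r'))(\chi_\delta\psi(\mathbf r) - \chi_\delta \psi(\mathbf r')) \over |\mathbf r - \mathbf r'|^3} \, d^2 r \, d^2 r',
\end{align*}
which, after symmetrizing in $\mathbf r \leftrightarrow \mathbf r'$ and recognizing the principal-value expression \eqref{halflapl}, equals ${\gamma\delta \over 2} \int_D \chi_\delta(\mathbf r) \psi(\mathbf r) \cdot (-\Delta)^{1/2}(\chi_\delta \bar\phi)(\mathbf r) \, d^2 r$, giving precisely the pointwise non-local term in \eqref{EL2d}. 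Integration by parts in the Dirichlet term produces both the bulk equation on $D$ and the Neumann boundary condition $\nu \cdot \nabla \bar\phi = 0$ on $\partial D$; note that $\chi_\delta$ vanishes identically in a neighborhood of $\partial D$, so the non-local term contributes nothing to the boundary integral.

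Finally, the regularity follows by bootstrapping the Neumann problem. Initially $\bar\phi \in H^1(D) \subset L^p(D)$ for every $p < \infty$ by Sobolev, and the map $H^1(\R^2) \to L^2(\R^2)$ of $(-\Delta)^{1/2}$ shows the non-local source term is in $L^2(D)$; standard elliptic regularity \cite{agmon59,lunardi} then gives $\bar\phi \in W^{2,2}(D)$. This upgrades $\chi_\delta \bar\phi$ to $H^2(\R^2)$, so $(-\Delta)^{1/2}(\chi_\delta \bar\phi) \in H^1(\R^2) \hookrightarrow L^p(\R^2)$ for all $p < \infty$, yielding $\bar\phi \in W^{2,p}(D)$ and hence $\bar\phi \in C^{1,\alpha}(\overline D)$. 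For interior smoothness I iterate: on any compact $K \subset D$ containing $\overline{D_\delta}$, the function $\chi_\delta\bar\phi$ inherits the regularity of $\bar\phi$, and the fractional Laplacian is a smoothing pseudodifferential operator in the interior, so standard interior Schauder estimates drive the bootstrap to $\bar\phi \in C^\infty(D)$. The main technical obstacle I expect is precisely the matching of these two regularity mechanisms: the boundary regularity from the local Neumann problem near $\partial D$ (where the non-local term vanishes by the support of $\chi_\delta$) and the interior regularization via the non-local operator away from $\partial D$. The fact that $\chi_\delta$ creates a smooth transition between these two regions, together with compactness of the support of $\chi_\delta \bar\phi$, is what makes the argument work without extra hypotheses on $\bar\phi$ at $\partial D$.
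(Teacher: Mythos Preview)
Your proposal is correct and follows essentially the same route as the paper: the direct method with the $\dot H^{1/2}$--$H^1$ interpolation inequality for coercivity and for passing to the limit in the non-local term, explicit Fr\'echet differentiation yielding the distributional equation $(1-\alpha\delta^2)\Delta\bar\phi + \bar\phi - \bar\phi^3 + \tfrac{\gamma\delta}{2}\chi_\delta(-\Delta)^{1/2}(\chi_\delta\bar\phi)=0$, and bootstrap via standard elliptic regularity for the Neumann problem. Your regularity argument is in fact spelled out more fully than the paper's (which simply says ``applying bootstrap then yields the remaining claims'' after the first $H^2$ step); note only that in your displayed variation of the non-local term there is a spurious factor of $2$ in the numerator, though your final identification with $\tfrac{\gamma\delta}{2}\int\chi_\delta\psi\,(-\Delta)^{1/2}(\chi_\delta\bar\phi)$ is correct.
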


\begin{proof}
  The proof is analogous to that of Proposition \ref{p:exist2d} and is
  simpler, because now $\chi_\delta \bar\phi \in H^1(\R^2)$. This
  means that by interpolation the non-local term in the energy may be
  controlled by the $H^1(\R^2)$ norm of $\chi_\delta \bar\phi$
  \cite{lieb-loss}, which, in turn, can be controlled by the $H^1(D)$
  norm of $\phi$. Thus, if $\bar \phi_n \in H^1(D)$ is a minimizing
  sequence, we may write
  \begin{align}
    C \geq \frac12 (1 - \alpha \delta^2) \| \nabla \bar\phi_n
    \|_{L^2(D)}^2 - \frac12  \| \bar\phi_n \|_{L^2(D)}^2 + \frac14  \|
    \bar\phi_n \|_{L^4(D)}^4 - {\gamma \delta \over 4}  \|
    \chi_\delta \bar\phi_n \|_{\mathring{H}^{1/2}(\R^2)}^2 \notag \\
    \geq \frac12 (1 - \alpha \delta^2) \| \nabla \bar\phi_n
    \|_{L^2(D)}^2 - 
    \frac12  \| \bar\phi_n \|_{L^2(D)}^2 + \frac14  \| \bar\phi_n
    \|_{L^4(D)}^4 \notag \\ 
    - {\gamma \delta \over 4} \| \chi_\delta \bar \phi_n \|_{L^2(\R^2)}
    \| \nabla (\chi_\delta \bar\phi_n) \|_{L^2(\R^2)} \notag \\
    \geq \frac12 (1 - \alpha \delta^2) \| \nabla \bar\phi_n
    \|_{L^2(D)}^2 - 
    \frac12  \| \bar\phi_n \|_{L^2(D)}^2 + \frac14  \| \bar\phi_n
    \|_{L^4(D)}^4 \notag \\ 
    -{\gamma \delta \over 4} \| \bar \phi_n \|_{L^2(D)} \left(   \| \nabla
    \bar \phi_n \|_{L^2(D)} + \| \nabla \chi_\delta
    \|_{L^\infty(\R^2)} \| \bar \phi_n \|_{L^2(D \backslash
    D_{2\delta})} \right) \notag \\
    \geq \frac12 (1 - \alpha \delta^2) \| \nabla \bar\phi_n
    \|_{L^2(D)}^2 - 
    \frac12  \| \bar\phi_n \|_{L^2(D)}^2 + \frac14  \| \bar\phi_n
    \|_{L^4(D)}^4 \notag \\ 
    -{\gamma \over 4} \| \bar \phi_n \|_{L^2(D)} \left(   \delta \| \nabla
    \bar \phi_n \|_{L^2(D)} + 2 \| \bar \phi_n \|_{L^2(D \backslash
    D_{2\delta})} \right), \label{2dinterpol}
  \end{align}
  for some $C > 0$ independent of $n$ and all $\delta$ sufficiently
  small. Therefore, by Cauchy-Schwarz and Young's inequalities we
  obtain
  \begin{align}
    \| \nabla \bar\phi_n \|_{L^2(D)}^2 - C_1 \| \bar\phi_n
    \|_{L^4(D)}^2 + C_2 \| \bar\phi_n \|_{L^4(D)}^4 \leq C_3,
  \end{align}
  for some $C_1, C_2, C_3 > 0$ independent of $n$. This yields
  compactness in $H^1(D)$ which, upon extraction of a subsequence,
  produces $\bar\phi \in H^1(D)$ such that
  $\bar\phi_n \rightharpoonup \bar\phi$ in $H^1(D)$,
  $\bar\phi_n \to \bar\phi$ in $L^p(D)$ for any $1 \leq p < \infty$
  (recall that $D \subset \R^2$ and is bounded), and again by
  interpolation we have $\bar\phi_n \to \bar\phi$ in $H^{1/2}(\R^2)$
  \cite{lieb-loss}. Finally, by lower semicontinuity of the gradient
  squared term, we obtain that $\bar\phi$ is a minimizer.

  Once existence of a minimizer $\bar\phi$ is established, the weak
  form of \eqref{EL2d} is obtained by an explicit computation:
  \begin{align}
    0 & = (1 - \alpha \delta^2) \int_D \nabla \bar \phi \cdot \nabla \bar
        \psi \, d^2 r + \int_D (\bar\phi^3 - \bar\phi) \bar \psi \, d^2 r
        \notag \\
      & - {\gamma \delta \over 8 \pi} \int_{\R^2} \int_{\R^2}
        {(\chi_\delta(\mathbf 
        r) \bar\phi(\mathbf r) - \chi_\delta(\mathbf
        r') \bar\phi(\mathbf 
        r')) (\chi_\delta(\mathbf
        r) \bar\psi(\mathbf r) - \chi_\delta(\mathbf
        r') \bar\psi(\mathbf 
        r')) \over |\mathbf r - \mathbf r'|^3} \, d^2 r \, d^2 r',
  \end{align}
  for any $\bar\psi \in H^1(D)$. Passing to Fourier space in the last
  term, we can then interpret this equation distributionally in $D$:
  \begin{align}
    \label{EL2ddistr}
    0 = (1 - \alpha \delta^2) \Delta \bar \phi + \bar\phi -
    \bar\phi^3 + {\gamma \delta \over 2} \chi_\delta (-\Delta)^{1/2}
    (\chi_\delta \bar\phi),
  \end{align}
  where for test functions the operator $ (-\Delta)^{1/2}$ is defined
  by \eqref{halflapl} (for a more detailed discussion of various
  representations of half-Laplacian in $\R^2$, see
  \cite{lmm:jns15}). Moreover, since
  $\chi_\delta \bar\phi \in H^1(D)$, the last term in
  \eqref{EL2ddistr} belongs to $L^2(\R^2)$, and by standard elliptic
  regularity $\bar \phi \in H^2(D)$, with Neumann boundary
  condition. Applying bootstrap then yields the remaining claims.
\end{proof}

We finish this section with an estimate for the energy $E$ on a fixed
domain $D$ and small $\delta$ that will be useful in
establishing the asymptotic behavior of the energy for $\delta \to 0$.  

\begin{lemma}
  \label{l:2dsmdel}
  There exists $\delta_0 > 0$ depending only on $\alpha$ and $D$ such
  that for all $0 < \delta \leq \delta_0$ and all
  $\bar\phi \in H^1(D)$ there holds
  \begin{align}
    E(\bar\phi) \geq \frac18 \| \nabla \bar\phi \|_{L^2(D)}^2 -
    \frac12 \left( 1 + {\gamma^2 \delta^2 \over 4} \right) \| \bar\phi
    \|_{L^2(D)}^2  + \frac14 \| \bar\phi \|_{L^4(D)}^4  + \frac14 |D| 
    \notag \\ -
    \gamma |\partial D|^{1/4} \delta^{1/4} \|
    \bar\phi \|_{L^2(D)} \| \bar\phi \|_{L^4(D)}.
  \end{align}
\end{lemma}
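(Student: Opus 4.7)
The plan is to track the argument sketched in~\eqref{2dinterpol} within the proof of Proposition~\ref{p:exist2d}, but now keep quantitative control of every constant so that the explicit numerical bound of the lemma falls out. First I would rewrite the double integral in $E(\bar\phi)$ as the squared homogeneous half-Sobolev seminorm,
\begin{align*}
  {\gamma \delta \over 16 \pi} \int_{\R^2} \int_{\R^2} {(\chi_\delta
  \bar\phi(\mathbf r) - \chi_\delta \bar\phi(\mathbf r'))^2 \over
  |\mathbf r - \mathbf r'|^3} \, d^2 r \, d^2 r' = {\gamma \delta
  \over 4} \| \chi_\delta \bar\phi \|_{\mathring H^{1/2}(\R^2)}^2,
\end{align*}
and apply the Fourier-side Cauchy-Schwarz interpolation $\| f \|_{\mathring H^{1/2}(\R^2)}^2 \leq \| f \|_{L^2(\R^2)} \| \nabla f \|_{L^2(\R^2)}$ to $f = \chi_\delta \bar\phi$. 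Since $0 \leq \chi_\delta \leq 1$, this gives the lower bound $-\tfrac{\gamma\delta}{4} \| \bar\phi \|_{L^2(D)} \| \nabla(\chi_\delta \bar\phi) \|_{L^2(\R^2)}$.

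Next I expand $\nabla(\chi_\delta \bar\phi) = \chi_\delta \nabla \bar\phi + \bar\phi \nabla \chi_\delta$ and split the resulting bound in two. For the interior piece $\tfrac{\gamma\delta}{4} \| \bar\phi \|_{L^2(D)} \| \nabla \bar\phi \|_{L^2(D)}$, Young's inequality with weights chosen to yield coefficient $\tfrac18$ in front of $\| \nabla \bar\phi \|_{L^2(D)}^2$ produces exactly $\tfrac18 \| \nabla \bar\phi \|_{L^2(D)}^2 + \tfrac{\gamma^2\delta^2}{8} \| \bar\phi \|_{L^2(D)}^2$; subtracting this from the gradient prefactor $\tfrac12(1 - \alpha \delta^2) \geq \tfrac14$, valid for $\delta \leq \delta_0(\alpha)$, leaves $\tfrac18 \| \nabla \bar\phi \|_{L^2(D)}^2$. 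For the boundary piece, I use $|\nabla \chi_\delta| \leq 2/\delta$ and $\mathrm{supp}\,\nabla \chi_\delta \subset D \setminus D_{2\delta}$, then Hölder's inequality $\| \bar\phi \|_{L^2(D \setminus D_{2\delta})} \leq |D \setminus D_{2\delta}|^{1/4} \| \bar\phi \|_{L^4(D)}$. The $C^2$ regularity of $\partial D$ yields $|D \setminus D_{2\delta}| \leq C \delta |\partial D|$ for $\delta$ small depending only on $D$, so the boundary contribution is dominated by $\gamma \delta^{1/4} |\partial D|^{1/4} \| \bar\phi \|_{L^2(D)} \| \bar\phi \|_{L^4(D)}$ after shrinking $\delta_0$ if necessary to absorb $C^{1/4}$ into the constant $1$ in front of $\gamma$.

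Finally, expanding $\tfrac14 (1 - \bar\phi^2)^2 = \tfrac14 - \tfrac12 \bar\phi^2 + \tfrac14 \bar\phi^4$ and integrating over $D$ supplies the $\tfrac14 |D|$, $-\tfrac12 \| \bar\phi \|_{L^2(D)}^2$ and $\tfrac14 \| \bar\phi \|_{L^4(D)}^4$ terms, and combining with the $-\tfrac{\gamma^2\delta^2}{8} \| \bar\phi \|_{L^2(D)}^2$ contribution from the gradient-cross-term absorption assembles the prefactor $-\tfrac12 (1 + \gamma^2 \delta^2 / 4)$ on $\| \bar\phi \|_{L^2(D)}^2$ exactly as claimed. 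The only real obstacle is bookkeeping: the specific constants $\tfrac18$, $\tfrac{\gamma^2 \delta^2}{4}$ and the coefficient $1$ in front of $\gamma |\partial D|^{1/4} \delta^{1/4}$ all arise only if the Young splitting of the cross term uses the precise weight $a = \tfrac14$ and if $\delta_0$ is chosen so that $C^{1/4}/2 \leq 1$ where $C = \sup_{\delta \leq \delta_0} |D \setminus D_{2\delta}|/(\delta |\partial D|)$, which tends to $2$ as $\delta \to 0$ for $C^2$ boundary, so that $C^{1/4}/2 < 1$ for $\delta_0$ small enough.
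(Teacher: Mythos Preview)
Your proposal is correct and follows essentially the same route as the paper's proof: both start from the interpolation estimate for the homogeneous $H^{1/2}$ seminorm of $\chi_\delta\bar\phi$ (exactly the chain in \eqref{2dinterpol}), split $\nabla(\chi_\delta\bar\phi)$ into interior and boundary pieces, absorb the interior cross term via Young's inequality with weight $1/8$, and control the boundary piece through H\"older and $|D\setminus D_{2\delta}|\lesssim |\partial D|\,\delta$. The only cosmetic difference is that the paper writes the geometric constant as $|D\setminus D_{2\delta}|\le 16|\partial D|\,\delta$, which gives the coefficient $1$ on the nose, whereas you observe that the ratio tends to $2$ and hence $C^{1/4}/2<1$ for $\delta_0$ small; either way the final inequality is the same.
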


\begin{proof}
  We argue as in the proof of Proposition \ref{p:exist2d}. Taking
  $\alpha \delta^2 \leq \frac12$ and using the estimate in
  \eqref{2dinterpol}, with the help of Young's inequality we obtain 
  \begin{align}
    E(\bar \phi) \geq \frac14 \| \nabla \bar\phi \|_{L^2(D)}^2 - 
    \frac12  \| \bar\phi \|_{L^2(D)}^2 + \frac14  \| \bar\phi
    \|_{L^4(D)}^4 + \frac14 |D| \notag \\ 
    -{\gamma \over 4} \| \bar \phi \|_{L^2(D)} \left(   \delta \| \nabla
    \bar \phi \|_{L^2(D)} + 2 \| \bar \phi \|_{L^2(D \backslash
    D_{2\delta})} \right) \notag \\
    \geq \frac18 \| \nabla \bar\phi \|_{L^2(D)}^2 - 
    \frac12  \| \bar\phi \|_{L^2(D)}^2 + \frac14  \| \bar\phi
    \|_{L^4(D)}^4 + \frac14 |D| \notag \\ 
    -{\gamma^2 \delta^2 \over 8} \| \bar \phi \|_{L^2(D)}^2 - {\gamma
    \over 2} \| \bar \phi \|_{L^2(D)} \| \bar \phi \|_{L^2(D 
    \backslash D_{2\delta})}. \label{y14}
  \end{align}
  On the other hand, choosing $\delta$ so small that
  $|D \backslash D_{2\delta}| \leq 16 |\partial D| \delta$, by
  Cauchy-Schwarz inequality we have
  \begin{align}
    \label{cs14}
    \| \bar \phi \|_{L^2(D 
    \backslash D_{2\delta})} \leq 2 |\partial D|^{1/4} \delta^{1/4} \|
    \bar \phi \|_{L^4(D)}. 
  \end{align}
  Combining \eqref{cs14} with \eqref{y14} then yields the result.
\end{proof}

\section{Proof of Theorem \ref{t:main}}
\label{sec:proof1}

The main ingredient in the proof of Theorem \ref{t:main} is a careful
estimate of the non-local part of the three-dimensional energy
$\mathcal E$ evaluated on $\chi_\delta \phi$ (to exclude the effect of
the edge) in terms of the non-local part of the two-dimensional energy
$E$ evaluated on $\chi_\delta \bar\phi$ , where $\bar\phi$ is given by
\eqref{av}. The key point is that the difference between the two can
be controlled by the gradient squared term in $\mathcal E(\phi)$. Note
that a similar argument in the periodic setting was recently
introduced in \cite{kmn17}. We establish the estimate in the following
lemma.

\begin{lemma}
  \label{l:ded}
  Let $\phi \in H^1(\Omega)$ be extended by zero to the whole of
  $\R^3$ and let $\bar\phi$ be defined by \eqref{av}. Then
  \begin{align}
    \label{keyest}
    \left| \int_{\R^3} \left( \partial_z (\chi_\delta \phi)
    (-\Delta)^{-1} \partial_z (\chi_\delta \phi) -
    \chi_\delta^2 \phi^2
    \right) d^3 r + {\delta^2 \over 2} \int_{\R^2} 
    \chi_\delta \bar \phi (-\Delta)^{1/2} \chi_\delta \bar \phi \, d^2
    r \right| \notag \\
    \leq {\delta^2 \over 2} \int_\Omega |\nabla (\chi_\delta
    \phi)|^2 \, d^3 r.
  \end{align}
\end{lemma}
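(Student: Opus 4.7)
The plan is to pass to Fourier variables and reduce the estimate to a scalar inequality on the kernel $1 - e^{-|\mathbf{k}||z-z'|}$. Set $u := \chi_\delta \phi$ and $\bar u := \chi_\delta \bar\phi$, and note that $u$ is supported in $\overline{D_\delta} \times [0, \delta] \subset \Omega$ because $\chi_\delta$ is supported in $\overline{D_\delta} \subset D$. Let $\hat u(\mathbf{k}, z)$ denote the two-dimensional Fourier transform of $u(\cdot, z)$ in the plane. By Lemma \ref{l:nloce} together with Parseval,
\begin{align*}
\int_{\R^3} \left( \partial_z u \, (-\Delta)^{-1} \partial_z u - u^2 \right) d^3 r = -\int_{\R^2} \frac{d^2 k}{(2\pi)^2} \int_{\R} \frac{dk_z}{2\pi} \, \frac{|\mathbf{k}|^2}{|\mathbf{k}|^2 + k_z^2} \, |\hat u(\mathbf{k}, k_z)|^2.
\end{align*}

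The key observation is that the inverse Fourier transform in $k_z$ of $|\mathbf{k}|^2 / (|\mathbf{k}|^2 + k_z^2)$ equals $\tfrac{|\mathbf{k}|}{2} e^{-|\mathbf{k}||z|}$. Applying Parseval in $k_z$ and using that $\hat u(\mathbf{k}, \cdot)$ is supported in $[0, \delta]$, the inner integral above rewrites as a weighted double integral over $(0, \delta)^2$ against the kernel $\tfrac{|\mathbf{k}|}{2} e^{-|\mathbf{k}||z-z'|}$. Turning to the two-dimensional side, $\widehat{\bar u}(\mathbf{k}) = \delta^{-1} \int_0^\delta \hat u(\mathbf{k}, z) \, dz$ combined with the Fourier symbol $|\mathbf{k}|$ of $(-\Delta)^{1/2}$ gives
\begin{align*}
\frac{\delta^2}{2} \int_{\R^2} \bar u \, (-\Delta)^{1/2} \bar u \, d^2 r = \int_{\R^2} \frac{d^2 k}{(2\pi)^2} \frac{|\mathbf{k}|}{2} \int_0^\delta \int_0^\delta \hat u(\mathbf{k}, z) \, \overline{\hat u(\mathbf{k}, z')} \, dz \, dz'.
\end{align*}

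Adding these two identities, the quantity $R$ inside the absolute value on the left of \eqref{keyest} takes the clean form
\begin{align*}
R = \int_{\R^2} \frac{d^2 k}{(2\pi)^2} \frac{|\mathbf{k}|}{2} \int_0^\delta \int_0^\delta \left( 1 - e^{-|\mathbf{k}||z-z'|} \right) \hat u(\mathbf{k}, z) \, \overline{\hat u(\mathbf{k}, z')} \, dz \, dz'.
\end{align*}
Using the elementary bound $0 \leq 1 - e^{-t} \leq t$ with $t = |\mathbf{k}||z - z'| \leq |\mathbf{k}| \delta$, followed by Cauchy-Schwarz in $z$ to get $\big( \int_0^\delta |\hat u(\mathbf{k}, z)| \, dz \big)^2 \leq \delta \int_0^\delta |\hat u(\mathbf{k}, z)|^2 \, dz$, I arrive at
\begin{align*}
|R| \leq \frac{\delta^2}{2} \int_0^\delta \int_{\R^2} \frac{d^2 k}{(2\pi)^2} |\mathbf{k}|^2 |\hat u(\mathbf{k}, z)|^2 \, dz = \frac{\delta^2}{2} \int_\Omega |\nabla_{\!x,y} u|^2 \, d^3 r,
\end{align*}
which is controlled by $\tfrac{\delta^2}{2} \int_\Omega |\nabla u|^2 \, d^3 r$, as required.

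The main technical point to watch for is that, although $u$ extended by zero to $\R^3$ lies only in $BV(\R^3)$ with jumps at $z = 0$ and $z = \delta$, the distributional derivative $\partial_z u$ already absorbs these jump contributions in the sense of Lemma \ref{l:nloce}. Consequently, the $k_z$-integration is really being performed against the $L^2$ function $\hat u(\mathbf{k}, \cdot)$ on $(0, \delta)$, and no surface terms intrude on the reduction. The heart of the argument is then nothing more than the pairing of two Fourier symbols combined with one elementary scalar inequality on $1 - e^{-t}$, mirroring the argument used in the periodic setting of \cite{kmn17}.
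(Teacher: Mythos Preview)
Your proof is correct and follows essentially the same Fourier-analytic approach as the paper: both arguments reduce the quantity in \eqref{keyest} to a double integral in $z,z'$ against a kernel built from $e^{-|\mathbf{k}||z-z'|}$, and both close with the scalar bound $1-e^{-t}\le t$ combined with Cauchy--Schwarz in $z$. The only organizational difference is that the paper writes the dipolar energy via $\partial_z\hat\psi$ against the fundamental solution $H_{\mathbf k}(z)=|\mathbf k|^{-1}e^{-|\mathbf k||z|}$ and Taylor-expands $H_{\mathbf k}$ to third order before integrating by parts in the remainder, whereas you first absorb the $\int u^2$ subtraction on the Fourier side (via the symbol $k_z^2/(|\mathbf k|^2+k_z^2)-1$) and arrive directly at the kernel $\tfrac{|\mathbf k|}{2}(1-e^{-|\mathbf k||z-z'|})$; this is a bit more economical but not substantively different.
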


\begin{proof}
  To simplify the notations, let us introduce
  $\psi := \chi_\delta \phi$ and $\bar\psi := \chi_\delta
  \bar\phi$.
  Notice that in view of Lemma \ref{l:nloce}, we can argue by
  approximation and assume that $\psi \in C^\infty_c(\R^3)$ and
  $\bar \psi \in C^\infty_c(\R^2)$.  Next, for each $z \in \R$ define
  the Fourier transform of $\psi = \psi(x, y, z)$ in the first two
  variables $\mathbf (x, y) = \mathbf r \in \R^2$:
  \begin{align}
    \widehat\psi_\mathbf{k}(z) := \int_{\R^2} e^{i \mathbf k \cdot \mathbf
    r} \psi(\mathbf r, z) \, d^2 r \qquad \mathbf k \in \R^2.
  \end{align}

  We write the three-dimensional dipolar interaction energy (up to a
  factor) in terms of the associated potential
  $\varphi \in C^\infty(\R^3)$:
  \begin{align}
    \label{eq:7}
    \mathcal E_d(\psi) := \int_{\R^3} \partial_z \psi
    (-\Delta)^{-1} \partial_z  \psi \, d^3 r = -\int_{\R^3} \varphi
    \, \partial_z \psi \, d^3 r, \qquad \qquad \Delta \varphi
    = \partial_z \psi \quad \text{in} \quad \R^3. 
  \end{align}
  Passing to the Fourier space, with the help of Parseval's identity
  we get
  \begin{align}
    \label{eq:8}
    \mathcal E_d(\psi) = - {1 \over (2 \pi)^2} \int_{\R^2}
    \int_0^\delta \widehat 
    \varphi_k^* \partial_z \widehat \psi_\mathbf{k} \, dz \, d^2 k,
  \end{align}
  where we introduced the Fourier transform
  $\widehat\varphi_\mathbf{k} = \widehat\varphi_\mathbf{k} (z)$ of
  $\varphi$, which solves
  \begin{align}
    \label{eq:9}
    {d^2 \widehat\varphi_\mathbf{k}  \over dz^2} - |\mathbf k|^2
    \widehat\varphi_\mathbf{k}  = \partial_z \widehat\psi_\mathbf{k}
    \qquad z  \in \R. 
  \end{align}

  Introducing the fundamental solution
  \begin{align}
    \label{eq:10}
    H_\mathbf{k} (z) := {e^{-|\mathbf{k}| |z|} \over |\mathbf{k}|} \qquad
    \mathbf{k} \in \R^2 
    \backslash \{0\}, \ z \in \R,
  \end{align}
  of the ordinary differential equation
  \begin{align}
    \label{eq:17}
    - {d^2 H_\mathbf{k} (z) \over dz^2} + |\mathbf{k}|^2 H_\mathbf{k}
    (z) = 2 \delta^{(1)}(z) \qquad z \in \R, 
  \end{align}
  where $\delta^{(1)}(z)$ is the one-dimensional Dirac delta-function,
  we can write the solution of \eqref{eq:9} in terms of
  $H_\mathbf{k}(z)$ as
  \begin{align}
    \label{eq:11}
    \widehat\varphi_\mathbf{k} (z) = - \frac12 \int_\R H_\mathbf{k} (z
    - z') \partial_z \widehat\psi_\mathbf{k} (z') dz' \qquad z \in
    \R. 
  \end{align}
  Thus, we have
  \begin{align}
    \label{eq:12}
    \mathcal E_d(\psi) = \frac{1}{8 \pi^2} \int_{\R^2}\int_\R \int_\R
    \partial_z \widehat\psi_\mathbf{k}^* (z) H_\mathbf{k}(z -
    z') \partial_z \widehat\psi_\mathbf{k} (z') dz \, dz' \, d^2 k.
  \end{align}

  Introduce now $H_\mathbf{k}^{(0)}(z - z') := |\mathbf k|^{-1}$ and
  observe that
  \begin{multline}
    \label{eq:13}
    \mathcal E_d^{(0)}(\psi) := \frac{1}{8 \pi^2} \int_{\R^2} \int_\R
    \int_\R
    \partial_z \widehat\psi_\mathbf{k}^* (z) H_k^{(0)}(z - z')
    \partial_z \widehat\psi_\mathbf{k} (z') dz \, dz' \, d^2 k \\
    = \frac{1}{8 \pi^2} \int_{\R^2} {1 \over |\mathbf k|} \int_\R
    \int_\R
    \partial_z \widehat\psi_\mathbf{k}^* (z)
    \partial_z \widehat\psi_\mathbf{k} (z') dz \, dz' \, d^2 k = 0.
  \end{multline}
  Similarly, with $H_\mathbf{k}^{(1)}(z - z') := -|z - z'|$ we have by
  Parseval's identity
  \begin{multline}
    \label{eq:14}
    \mathcal E_d^{(1)}(\psi) = \frac{1}{8 \pi^2} \int_{\R^2} \int_\R
    \int_\R \partial_z \widehat\psi_\mathbf{k}^* (z)
    H_\mathbf{k}^{(1)}(z - z')
    \partial_z \widehat\psi_\mathbf{k} (z') dz \, dz' \, d^2 k \\
    = \frac{1}{(2 \pi)^2} \int_{\R^2} \int_\R \partial_z
    \widehat\psi_\mathbf{k}^* (-\partial_z^2)^{-1} \partial_z
    \widehat\psi_\mathbf{k} dz \\
    = \frac{1}{(2 \pi)^2} \int_{\R^2} \int_\R \left|
      \widehat\psi_\mathbf{k} \right|^2 dz \, d^2 k = \int_{\R^3}
    \psi^2 d^3 r.
  \end{multline}
  In turn, with $H_k^{(2)}(z - z') = \tfrac12 |k| (z - z')^2$ we have
  \begin{multline}
    \label{eq:19}
    \mathcal E_d^{(2)}(\psi) = \frac{1}{8 \pi^2} \int_{\R^2} \int_\R
    \int_\R \partial_z \widehat\psi_\mathbf{k}^* (z)
    H_\mathbf{k}^{(2)}(z - z')
    \partial_z \widehat\psi_\mathbf{k} (z') dz \, dz' \, d^2 k \\
    = -\frac{1}{8 \pi^2} \int_{\R^2} |\mathbf k| \int_0^\delta
    \int_0^\delta \widehat\psi_\mathbf{k}^* (z)
    \widehat\psi_\mathbf{k} (z') \, dz \, dz' \, d^2 k \\
    = - \frac{\delta^2}{8 \pi^2} \int_{\R^2} |\mathbf k| \left|
      \widehat{\overline \psi}_\mathbf{k} \right|^2 d^2 k = -
    {\delta^2 \over 2} \int_{\R^2} \bar\psi \, (-\Delta)^{1/2} \,
    \bar\psi \, d^2 r.
  \end{multline}

  We now estimate the energy difference
  $\Delta \mathcal E_d(\psi) = \mathcal E_d(\psi) - \mathcal
  E_d^{(0)}(\psi) - \mathcal E_d^{(1)}(\psi) - \mathcal
  E_d^{(2)}(\psi)$.
  Introduce
  $I_\mathbf{k} (z) := H_\mathbf{k} (z) - H_\mathbf{k}^{(0)}(z) -
  H_\mathbf{k}^{(1)}(z) - H_\mathbf{k}^{(2)}(z)$,
  and observe that $I_\mathbf{k} \in C^2(\R)$ and
  \begin{align}
    I''_\mathbf{k}(z) = {d^2 I_\mathbf{k} (z) \over dz^2} =
    -|\mathbf{k}| \left( 1 - e^{-|\mathbf{k}| 
    |z|}\right), 
  \end{align}
  In particular, we have
  \begin{align}
    \label{eq:38}
    |I_\mathbf{k}''(z)| \leq |k|^2 \delta \qquad \forall |z| \leq \delta. 
  \end{align}
  Integrating by parts, we express the excess energy in terms of
  $I_\mathbf{k}''(z)$:
  \begin{multline}
    \label{eq:27}
    \Delta \mathcal E_d(\psi) = \frac{1}{8 \pi^2} \int_{\R^2} \int_\R
    \int_\R
    \partial_z \widehat\psi_\mathbf{k}^* (z) I_\mathbf{k}(z - z')
    \partial_z \widehat\psi_\mathbf{k} (z') dz \, dz' \\
    = -\frac{1}{8 \pi^2} \int_{\R^2} \int_\R \int_\R
    \widehat\psi_\mathbf{k}^* (z) I_\mathbf{k}''(z - z')
    \widehat\psi_\mathbf{k} (z') \, dz \, dz' \, d^2 k.
  \end{multline}
  Therefore, applying Cauchy-Schwarz inequality and using
  \eqref{eq:38}, we obtain
  \begin{align}
  \label{eq:39}
    |\Delta \mathcal E_d(\psi)| 
    & \leq \frac{1}{8 \pi^2} \int_{\R^2} \int_0^\delta \int_0^\delta
      \left| \widehat\psi_\mathbf{k} (z) \right| \, \left| I_\mathbf{k}''(z -
      z') \right| \, \left|
      \widehat\psi_\mathbf{k} (z') \right| \, dz \, dz' \, d^2 k \notag \\
    & \leq  \frac{\delta^2}{8 \pi^2} \int_{\R^2} \int_0^\delta |\mathbf k|^2  
      \left| \widehat\psi_\mathbf{k} (z) \right|^2 \, dz \, \, d^2
      k \leq  {\delta^2 \over 2} 
      \int_\Omega
      |\nabla \psi|^2 d^3 r,
  \end{align}
  which yields the claim. 
\end{proof}

\begin{proof}[Proof of Theorem \ref{t:main}]
  We begin with a lower bound and split the energy into the local and
  the dipolar parts:
  \begin{align}
    \mathcal E(\phi) = \mathcal E_l(\phi) + {\gamma \over 2} \mathcal
    E_d(\phi), 
  \end{align}
  where $\mathcal E_d$ is defined in \eqref{eq:7}. Applying Jensen's
  inequality to the positive terms, for any $\alpha_1 > 0$,
  $\alpha_2 > 0$ and $\delta > 0$ such that
  $(\alpha_1 + \alpha_2 \gamma) \delta^2 < 1$ we have
  \begin{align}
    \mathcal E_l(\phi) 
    & = \int_\Omega \left( \frac12 | \nabla \phi|^2
      - \frac12 \phi^2 + \frac14 \phi^4 - \frac14 \right) d^3 r \notag
    \\
    & \geq \int_{D \times (0, \delta)} \left( \frac12 (1 - \alpha_2
      \gamma \delta^2) | 
      \nabla \bar \phi|^2 - \frac12 \phi^2 + \frac14 \bar \phi^4 -
      \frac14 \right) d^3 r + {\alpha_2 \gamma \delta^2 \over 2}
      \int_\Omega |\nabla \phi|^2 d^3 r
      \notag \\
    & \geq \int_{D \times (0, \delta)} \left( \frac12  (1 - \alpha_2
      \gamma \delta^2)| 
      \nabla \bar \phi|^2 
      + \frac14 \left( 1 - \bar \phi^2 \right)^2 \right) d^3 r \notag
    \\ 
    & \qquad \qquad \qquad -
      \frac12 \int_{D \times (0, \delta)} (\phi - \bar \phi)^2 d^3 r + {\alpha_2
      \gamma \delta^2 \over 2}  \int_\Omega |\nabla \phi|^2 d^3 r. 
  \end{align}
  Therefore, by Poincar\'e's inequality we obtain
  \begin{align}
    \label{Epoinc}
    \mathcal E_l(\phi) \geq \int_{D \times (0, \delta)} \left( \frac12
    (1 - \alpha_1 \delta^2 - \alpha_2 \gamma \delta^2) | \nabla \bar
    \phi|^2 + \frac14 \left( 1 - \bar \phi^2 \right)^2 \right) d^3 r +
    {\alpha_2 \gamma \delta^2 \over 2}  \int_\Omega |\nabla \phi|^2
    d^3 r, 
  \end{align}
  with $\alpha_1 = \pi^{-2}$.

  Turning now to the dipolar part, we observe that by Lemma
  \ref{l:edge} we have for all $\delta$ sufficiently small:
  \begin{align}
    \label{Edbeta}
    \mathcal E_d(\phi)  \geq \mathcal E_d(\chi_\delta \phi) - 98
    \delta^2 \| \phi \|_{L^\infty(\Omega)}^2
    |\partial D|.
  \end{align}
  At the same time, by Lemma \ref{l:ded} we may write
  \begin{align}
    \label{Edlb}
    \mathcal E_d(\chi_\delta \phi) \geq \int_\Omega \chi_\delta^2
    \phi^2 \, d^3 r - {\delta^2 \over 2} \int_{\R^2}
    \chi_\delta \bar\phi (-\Delta)^{1/2} \chi_\delta \bar \phi \, d^2 r -
    {\delta^2 \over 2} \int_\Omega |\nabla (\chi_\delta \phi)|^2
    d^3 r.
  \end{align}
  By Young's inequality, the last term in \eqref{Edlb} may be
  estimated as
  \begin{align}
    {\delta^2 \over 2} \int_\Omega | \nabla (\chi_\delta \phi)|^2
    d^3 r \leq \delta^2 \int_\Omega \left( | \nabla \chi_\delta
    |^2 \phi^2 + \chi_\delta^2 |\nabla \phi|^2 \right) d^3
    r \notag \\
    \leq 4 \int_{\Omega \backslash \Omega_{2\delta}} \phi^2 d^3 r +
    \delta^2 \int_\Omega |\nabla \phi|^2 d^3 r.
  \end{align}
  Therefore, we have
  \begin{align}
    \label{EdOm2d}
    \mathcal E_d(\chi_\delta \phi) - \int_\Omega \phi^2 d^3 r +
    {\delta^2 \over 2} \int_{\R^2} 
    \chi_\delta \bar\phi (-\Delta)^{1/2} \chi_\delta \bar \phi \, d^2
    r  \notag \\
    \geq - 5 \| \phi \|_{L^\infty(\Omega)}^2 |\Omega \backslash
    \Omega_{2 \delta}| - \delta^2 \int_\Omega |\nabla \phi|^2 d^3 r.
  \end{align}
  Noting that
  $| \Omega \backslash \Omega_{2 \delta}| \leq 6 |\partial D|
  \delta^2$
  for all $\delta > 0$ sufficiently small depending only on $D$ and
  combining \eqref{EdOm2d} with \eqref{Edbeta}, we finally arrive at
    \begin{align}
      \mathcal E_d(\phi) - \int_\Omega \phi^2 d^3 r +
      {\delta^2 \over 2} \int_{\R^2} 
      \chi_\delta \bar\phi (-\Delta)^{1/2} \chi_\delta \bar \phi \, d^2
      r  \geq - \beta \delta^2 \| \phi \|_{L^\infty(\Omega)}^2
      |\partial D| - \delta^2 \int_\Omega |\nabla \phi|^2 d^3 r,
    \end{align}
    for some universal $\beta > 0$. The lower bound in
    \eqref{EvsEElower} then follows by combining the above estimate
    with \eqref{Epoinc} and choosing $\alpha_2 = 1$.

    We now proceed to proving \eqref{EvsEEupper}. To begin, we define
    $\phi$ in $\Omega$ to be a $z$-independent function, thus,
    satisfying \eqref{av} in $D \times (0, \delta)$. Namely, for
    $(x, y, z) \in D \times (0, \delta)$, we define
    $\phi (x, y, z) := \bar \phi(x, y)$. Next, we extend $\phi$ to the
    rest of $\Omega$ by a reflection about
    $\partial D \times (0, \delta)$. More precisely, for
    $\mathbf r \in \R^2$ define
    $\rho(\mathbf r) : = \text{dist}(\mathbf r, \R^2 \backslash D) -
    \text{dist}(\mathbf r, D)$
    to be the signed distance function to $\partial D$ in the
    plane. Then, for all $\delta$ sufficiently small depending only on
    $D$ there is a tubular neighborhood of $\partial D$ in which we
    can define a continuous unit outward normal vector $\nu$ to the
    projection on $\partial D$, i.e., we have
    $\mathbf r + \rho(\mathbf r) \nu(\mathbf r) \in \partial D$ for
    all $\mathbf r \in D$ such that $|\rho(\mathbf r)| \leq \delta$
    and all $0 < \delta \leq \delta_0$ for some $\delta_0 > 0$
    depending only on $D$. We then define for $\mathbf r = (x, y) \in
    \R^2 \backslash D$ and all $z \in (0, \delta)$ the extension of
    $\bar\phi$ as $\phi(x, y, z) := \bar\phi(\mathbf r + 2 \nu(\mathbf
    r) \rho(\mathbf r))$. In view of the regularity of $\partial D$,
    we then have 
    \begin{align}
      \label{edgegrad}
      \int_{\Omega \backslash (D \times (0, \delta))} |\nabla \phi|^2
      d^3 r \leq 2 \delta \int_{D \backslash D_\delta} |\nabla \bar\phi|^2
      d^2 r,
    \end{align}
    for $\delta_0$ sufficiently small depending only on $D$. 

    We now use positivity of different terms in the energy and Lemma
    \ref{l:nloce} to estimate
    \begin{align}
      (1 - 2 \alpha \delta^2) \mathcal E(\phi) \leq \delta \int_D
      \left( \frac12 (1 - \alpha \delta^2) |\nabla \bar \phi|^2 +
      \frac14 \left( 1 - \bar\phi^2 \right)^2 \right) d^2 r \notag \\
      + \int_{\Omega \backslash (D \times (0, \delta))}
      \left( \frac12 |\nabla \phi|^2 + \frac14 \left( 1 - \phi^2
      \right)^2 \right) d^3 r \notag \\
      + {\gamma \over 2} \int_{\R^3} \left( \partial_z \phi
      (-\Delta)^{-1} \partial_z \phi - \phi^2 \right) \, d^3 r \notag \\
      + \gamma \alpha \delta^2 \int_\Omega \phi^2 d^3 r
      - {\alpha \delta^2 \over 2} \int_\Omega |\nabla \phi|^2 d^3 r . 
    \end{align}
    Accordingly, possibly increasing the values of $\alpha_1$,
    $\alpha_2$ and $\beta$, by Lemmas \ref{l:edge} and \ref{l:ded},
    Young's inequality and using \eqref{edgegrad}, we may write
    \begin{align}
      (1 - 2 \alpha \delta^2) \mathcal E(\phi) 
      & \leq E(\bar\phi) \delta
        + \delta \int_{D \backslash D_\delta} |\nabla \bar\phi|^2 d^2 r
        \notag \\
      & + \beta \delta^2 (1 + \gamma^2) \left( 1
        + \| \phi \|_{L^\infty(\Omega)}^4 \right) (
        |\partial D| + |D| \delta ).
    \end{align}
    The result then follows by possibly further decreasing the value
    of $\delta_0$ and increasing the value of $\beta$.
\end{proof}

\section{Proof of Theorem \ref{t:gioia}}
\label{sec:proof-theor-reft:g}

We begin by establishing compactness of sequences satisfying
\eqref{limsupE}. As in the statement of Theorem \ref{t:gioia}, for
$\phi_\delta \in H^1(\Omega^\delta)$ we define
$\bar \phi_\delta \in H^1(D)$ to be given by \eqref{av} with $\phi$
replaced by $\phi_\delta$. We also define $\Omega^\delta_\delta$,
etc., to be given by \eqref{Omd} with $\Omega$ replaced by
$\Omega^\delta$. 

\begin{proposition}
  \label{p:comp3d}
  For a sequence of $\delta \to 0$, assume
  $\phi_\delta \in H^1(\Omega^\delta)$ satisfies
  \eqref{limsupE}. Then, we have
  $\| \nabla \phi_\delta \|_{L^2({\Omega^\delta})}^2 \leq C \delta$
  for some $C > 0$ independent of $\delta$, and upon extraction of a
  subsequence $\bar\phi_\delta \rightharpoonup \bar\phi$ in $H^1(D)$
  and $\bar\phi_\delta \to \bar\phi$ in $L^p(D)$ for any
  $1 \leq p < \infty$.
\end{proposition}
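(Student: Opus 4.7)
The plan is to extract the gradient bound and the $L^2$ bound directly from the upper bound on the energy, then use Jensen's inequality to transfer these bounds to the $z$-average $\bar\phi_\delta$, and finally invoke Rellich compactness in two dimensions.

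First, I would use Corollary \ref{c:nloc3dpos} to control the non-local term from below: for each $\delta$,
\begin{align}
  \tfrac{\gamma}{2}\int_{\R^3} \left( \partial_z \phi_\delta  (-\Delta)^{-1} \partial_z \phi_\delta - \phi_\delta^2 \right) d^3 r \geq -\tfrac{\gamma}{2} \int_{\Omega^\delta} \phi_\delta^2 \, d^3 r.
\end{align}
Combining this with the assumed upper bound $\mathcal E_\delta(\phi_\delta) \leq C \delta$ and the identity $\tfrac14 (1-\phi^2)^2 - \tfrac{\gamma}{2}\phi^2 = \tfrac14 - \tfrac{1+\gamma}{2}\phi^2 + \tfrac14 \phi^4$, I obtain
\begin{align}
  \tfrac12 \| \nabla \phi_\delta \|_{L^2(\Omega^\delta)}^2 + \tfrac14 \| \phi_\delta \|_{L^4(\Omega^\delta)}^4 - \tfrac{1+\gamma}{2} \| \phi_\delta \|_{L^2(\Omega^\delta)}^2 + \tfrac14 |\Omega^\delta| \leq C\delta.
\end{align}
Using the elementary bound $\tfrac{1+\gamma}{2} t \leq \tfrac18 t^2 + 2(1+\gamma)^2$ applied to $t = \phi_\delta^2$ and noting that $|\Omega^\delta| \leq C' \delta$, the double-well contribution is absorbed and I get both $\|\nabla \phi_\delta\|_{L^2(\Omega^\delta)}^2 \leq C \delta$ and $\|\phi_\delta\|_{L^2(\Omega^\delta)}^2 \leq C \delta$ with constants independent of $\delta$.

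Second, since $D \times (0,\delta) \subset \Omega^\delta$, Jensen's (or Cauchy-Schwarz) inequality applied pointwise in $(x,y)$ gives
\begin{align}
  |\nabla \bar\phi_\delta(x,y)|^2 \leq \tfrac{1}{\delta} \int_0^\delta |\nabla_{x,y} \phi_\delta(x,y,z)|^2 \, dz, \qquad |\bar\phi_\delta(x,y)|^2 \leq \tfrac{1}{\delta} \int_0^\delta |\phi_\delta(x,y,z)|^2 \, dz.
\end{align}
Integrating over $D$ and using the bounds from the previous step yields $\| \bar\phi_\delta \|_{H^1(D)}^2 \leq C$ uniformly in $\delta$.

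Finally, since $D \subset \R^2$ is bounded with $C^2$ boundary, the Rellich--Kondrachov theorem gives the compact embedding $H^1(D) \hookrightarrow L^p(D)$ for every $1 \leq p < \infty$. Hence, upon extracting a subsequence (not relabelled), $\bar\phi_\delta \rightharpoonup \bar\phi$ in $H^1(D)$ and $\bar\phi_\delta \to \bar\phi$ in $L^p(D)$ for all $1 \leq p < \infty$. The only mildly delicate point is cleanly separating the sign-indefinite terms in the energy; once Corollary \ref{c:nloc3dpos} is invoked, the rest reduces to a standard compactness argument.
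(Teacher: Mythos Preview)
Your proof is correct and follows essentially the same route as the paper: invoke Corollary~\ref{c:nloc3dpos} to bound the non-local term below, absorb the sign-indefinite $\phi_\delta^2$ term into the quartic using Young's inequality, then transfer the resulting 3D bounds to $\bar\phi_\delta$ via Jensen and conclude by Rellich--Kondrachov in $H^1(D)$. The only cosmetic difference is that the paper applies Cauchy--Schwarz to estimate $\|\phi_\delta\|_{L^2(\Omega^\delta)}^2 \leq |\Omega^\delta|^{1/2}\|\phi_\delta\|_{L^4(\Omega^\delta)}^2$ and then Young's inequality on the norms, whereas you apply the pointwise bound $\tfrac{1+\gamma}{2}t \leq \tfrac18 t^2 + C$ directly before integrating; both yield the same $\|\nabla\phi_\delta\|_{L^2}^2 + \|\phi_\delta\|_{L^4}^4 \leq C\delta$ conclusion.
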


\begin{proof}
  By Corollary \ref{c:nloc3dpos} and Cauchy-Schwarz inequality, we
  have
  \begin{align}
    C \delta 
    & \geq \mathcal E_\delta(\phi_\delta) \geq \int_{\Omega^\delta}
      \left( \frac12 |\nabla \phi_\delta|^2 - \frac12 (1 + \gamma)
      \phi_\delta^2 + \frac14 \phi_\delta^4 \right) d^3 r \notag \\
    & \geq \int_{\Omega^\delta}
      \left( \frac12 |\nabla \phi_\delta|^2 + \frac14 \phi_\delta^4
      \right) d^3 r - \frac12 (1 + \gamma) |{\Omega^\delta}|^{1/2} \left(
      \int_{\Omega^\delta} \phi_\delta^4 d^3 r \right)^{1/2} \notag \\
    & \geq \frac14 \int_{\Omega^\delta} |\nabla \phi_\delta|^2 d^3 r + \frac14
      \int_D \int_0^\delta |\nabla' \phi_\delta|^2 dz \, 
      d^2 r + \frac18 \int_{\Omega^\delta} \phi_\delta^4 \, d^3 r - \frac12 (1 +
      \gamma)^2 |{\Omega^\delta}|,   
  \end{align}
  where $\nabla' = (\partial_x, \partial_y, 0)$, for some $C > 0$
  independent of $\delta$. On the other hand, for all $\delta$
  sufficiently small we have $|{\Omega^\delta}| \leq 2 |D| \delta$. Hence, by
  Jensen's inequality and arguing by approximation we have
  \begin{align}
    C + (1 + \gamma)^2 |D|  \geq \frac{1}{4 \delta} \int_{\Omega^\delta} |\nabla
    \phi_\delta|^2 d^3 r + {1 \over 4 \delta^2} \int_D \left|
    \int_0^\delta \nabla' \phi_\delta \, dz \right|^2 d^2 r + \frac18
    \int_D \bar\phi_\delta^4 \, d^2 r \notag \\
    \geq \frac{1}{4 \delta} \int_{\Omega^\delta} |\nabla
    \phi_\delta|^2 d^3 r + \frac14 \int_D |\nabla \bar\phi_\delta|^2
    d^2 r + {1 \over 8 |D|} \left( \int_D \bar\phi_\delta^2 d^2 r\right)^2,
  \end{align}
  where we again used Cauchy-Schwarz inequality in the last step.
  Thus, the sequence of $\delta^{-1/2} |\nabla \phi_\delta|$ is
  bounded in $L^2({\Omega^\delta})$, the sequence of $\bar\phi_\delta$ is
  bounded in $H^1(D)$, and by compact embedding there exists a
  subsequence with the desired properties.
\end{proof}

We now turn to the proof of Theorem \ref{t:gioia}. We note that if we
also assume that $\| \phi_\delta \|_{L^\infty({\Omega^\delta})} \leq M$ for
some $M > 0$ independent of $\delta$, we could immediately combine the
result of Theorem \ref{t:main} with the result of the following
proposition to obtain the claim (however, see Remark
\ref{r:Linfty3d}).

\begin{proposition}
  \label{p:EdE0}
  Let $\bar \phi_\delta \in H^1(D)$, and assume that for a sequence of
  $\delta \to 0$ we have $\bar \phi_\delta \to \bar\phi$ in
  $L^2(D)$. Then 
  \begin{align}
    \label{liminfEEE0}
    \liminf_{\delta \to 0} E_\delta(\bar\phi_\delta) \geq E_0(\bar\phi).
  \end{align}
  Conversely, for any $\bar\phi \in H^1(D)$ we have
  \begin{align}
    \label{limsupEEE0}
    \limsup_{\delta \to 0} E_\delta(\bar\phi) \leq E_0(\bar\phi).
  \end{align}
\end{proposition}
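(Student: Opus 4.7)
The plan is to prove the two inequalities \eqref{liminfEEE0} and \eqref{limsupEEE0} separately, handling the upper bound first since it is essentially immediate, then tackling the lower bound via a standard compactness-and-lower-semicontinuity argument.

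For \eqref{limsupEEE0}, the constant recovery sequence $\bar\phi_\delta \equiv \bar\phi$ does the job. Comparing \eqref{EE} with \eqref{E0} term by term gives
\begin{equation*}
E_\delta(\bar\phi) = E_0(\bar\phi) - \frac{\alpha\delta^2}{2}\int_D |\nabla\bar\phi|^2\, d^2 r - \frac{\gamma\delta}{16\pi}\int_{\R^2}\int_{\R^2}\frac{(\chi_\delta(\mathbf r)\bar\phi(\mathbf r) - \chi_\delta(\mathbf r')\bar\phi(\mathbf r'))^2}{|\mathbf r - \mathbf r'|^3}\, d^2 r\, d^2 r',
\end{equation*}
and both correction terms are manifestly non-positive. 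Hence $E_\delta(\bar\phi) \leq E_0(\bar\phi)$ for every $\delta > 0$ small enough, and passing to the $\limsup$ yields \eqref{limsupEEE0}.

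For \eqref{liminfEEE0} I would argue by extraction. If $\liminf_{\delta\to 0} E_\delta(\bar\phi_\delta) = +\infty$ there is nothing to prove, so I may assume it is finite and pass to a subsequence (not relabelled) along which $E_\delta(\bar\phi_\delta) \leq C$ and the limit realizes the liminf. The $L^2(D)$-convergence hypothesis keeps $\|\bar\phi_\delta\|_{L^2(D)}$ bounded, so Lemma \ref{l:2dsmdel}, combined with Young's inequality to absorb the $L^2$--$L^4$ cross term into the quartic bulk term, produces uniform $H^1(D)$-bounds on $\bar\phi_\delta$. A further subsequence (not relabelled) then satisfies $\bar\phi_\delta \rightharpoonup \bar\phi$ in $H^1(D)$ and $\bar\phi_\delta \to \bar\phi$ in $L^p(D)$ for every $1\leq p<\infty$ by Rellich's compact embedding in two dimensions; in particular $\bar\phi \in H^1(D)$ and $E_0(\bar\phi)$ is finite.

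It remains to take the liminf in each of the three pieces of $E_\delta(\bar\phi_\delta)$. The gradient piece $\tfrac12(1-\alpha\delta^2)\int_D|\nabla\bar\phi_\delta|^2$ has liminf bounded below by $\tfrac12\int_D|\nabla\bar\phi|^2$ by weak $H^1$ lower semicontinuity together with $\alpha\delta^2 \to 0$, and the double-well piece converges to $\tfrac14\int_D(1-\bar\phi^2)^2$ by the strong $L^4(D)$ convergence. The delicate point, and the main obstacle, is the negative non-local contribution; it is controlled by the interpolation bound already recorded in \eqref{2dinterpol}:
\begin{equation*}
\frac{\gamma\delta}{16\pi}\int_{\R^2}\int_{\R^2}\frac{(\chi_\delta(\mathbf r)\bar\phi_\delta(\mathbf r) - \chi_\delta(\mathbf r')\bar\phi_\delta(\mathbf r'))^2}{|\mathbf r - \mathbf r'|^3}\, d^2 r\, d^2 r' \leq \frac{\gamma}{4}\|\bar\phi_\delta\|_{L^2(D)}\bigl(\delta\|\nabla\bar\phi_\delta\|_{L^2(D)} + 2\|\bar\phi_\delta\|_{L^2(D\backslash D_{2\delta})}\bigr),
\end{equation*}
whose right-hand side tends to zero: the first factor is bounded, $\delta\|\nabla\bar\phi_\delta\|_{L^2(D)} \to 0$ by the uniform $H^1$-bound and the $\delta$ prefactor, and $\|\bar\phi_\delta\|_{L^2(D\backslash D_{2\delta})} \to 0$ by splitting $\bar\phi_\delta = \bar\phi + (\bar\phi_\delta - \bar\phi)$ and using $L^2(D)$ convergence together with dominated convergence on the shrinking set $D\backslash D_{2\delta}$. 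Summing the three contributions yields \eqref{liminfEEE0}. The crux is precisely the explicit $\delta$ prefactor of the non-local term in \eqref{EE}, which combined with the $L^2$--$H^1$ interpolation for the $\dot H^{1/2}$ norm is exactly what renders this term negligible as $\delta \to 0$.
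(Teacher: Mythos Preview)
Your proof is correct and follows essentially the same approach as the paper: both obtain uniform $H^1(D)$ bounds from Lemma~\ref{l:2dsmdel} and Young's inequality, extract a weakly convergent subsequence, and then pass to the limit term by term, with the non-local contribution shown to vanish via the interpolation estimate recorded in \eqref{2dinterpol}. Your write-up is simply more explicit than the paper's, which compresses the last step into the phrase ``arguing as in the proof of Lemma~\ref{l:2dsmdel}.''
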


\begin{proof}
  Without loss of generality, we may assume that 
  \begin{align}
    \label{EElimsup}
    \limsup_{\delta \to 0} E_\delta(\bar\phi_\delta) < +\infty.
  \end{align}
  Then, by Lemma \ref{l:2dsmdel} and Young's inequality the sequence
  of $\bar\phi_\delta$ is bounded in $H^1(D)$ and $L^4(D)$. Therefore,
  upon extraction of a subsequence, we also have
  $\bar\phi_\delta \rightharpoonup \bar\phi$ in $H^1(D)$. Arguing as
  in the proof of Lemma \ref{l:2dsmdel}, by lower semicontinuity of
  the $H^1(D)$ and $L^4(D)$ norms as well as strong convergence in
  $L^2(D)$ we then obtain \eqref{liminfEEE0}. Lastly, to obtain
  \eqref{limsupEEE0} we simply note that
  $E_\delta(\bar\phi) \leq E_0(\bar\phi)$.
\end{proof}

\begin{proof}[Proof of Theorem \ref{t:gioia}]
  The proof follows closely the arguments of the proof of Theorem
  \ref{t:main}, except we only apply the rough bound in
  \eqref{L2edge3drough}. The local part of the energy may be estimated
  exactly as in \eqref{Epoinc}. For the non-local part, we apply the
  first part of Lemma \ref{l:edge}. This leads to a lower bound
  \begin{align}
    \label{E2E2}
    \mathcal E_\delta(\phi_\delta) \geq 
    E_\delta(\bar\phi_\delta) \delta -2 \gamma
    \| \phi_\delta \|_{L^2({\Omega^\delta})} \| \phi_\delta 
    \|_{L^2({\Omega^\delta} \backslash \Omega^\delta_{2\delta})} + \frac14
    \int_{{\Omega^\delta} \backslash ( D \times (0, \delta))} \left( 1 -
    \phi_\delta^2 \right)^2 d^3 r.
  \end{align}

  To proceed, we note that by Poincar\'e's inequality we have
  \begin{align}
    \| \phi_\delta \|_{L^2(D \times (0, \delta))} \leq \delta^{1/2} \|
    \bar\phi_\delta \|_{L^2(D)} + {\delta \over \pi}  \| \nabla
    \phi_\delta \|_{L^2(D \times (0, \delta))},
  \end{align}
  and a similar estimate holds for
  $\| \phi_\delta \|_{L^2((D \backslash D_{2 \delta}) \times (0,
    \delta))}$.
  Therefore, from our assumption on the gradient of $\phi_\delta$ we
  obtain
  \begin{align}
    \label{phidL2Om}
    \| \phi_\delta \|_{L^2((D \times (0, \delta))} 
    & \leq \delta^{1/2} \| \bar\phi_\delta \|_{L^2(D)} + C
      \delta^{3/2}, \\ 
    \| \phi_\delta \|_{L^2((D \backslash D_{2 \delta}) \times (0, \delta))} 
    & \leq \delta^{1/2} \| \bar\phi_\delta \|_{L^2(D \backslash D_{2
      \delta})} + C \delta^{3/2},    \label{phidL2Omd}
  \end{align}
  for some $C > 0$ independent of $\delta$. Using these estimates, we
  get
  \begin{align}
    \| \phi_\delta \|_{L^2({\Omega^\delta})} \| \phi_\delta 
    \|_{L^2({\Omega^\delta} \backslash \Omega^\delta_{2\delta})} 
    & \leq \| \phi_\delta
      \|_{L^2({\Omega^\delta} \backslash (D \times (0, \delta)))}^2 \notag \\
    & + 2 \delta^{1/2}  \| \phi_\delta
      \|_{L^2({\Omega^\delta} \backslash (D \times (0, \delta)))} ( \|
      \bar\phi_\delta \|_{L^2(D)} + C \delta) \notag \\
    & + \delta  ( \|
      \bar\phi_\delta \|_{L^2(D)} + C \delta)  ( \|
      \bar\phi_\delta \|_{L^2(D \backslash D_{2 \delta})} + C \delta). 
  \end{align}
  Therefore, since $\bar\phi_\delta \to \bar\phi$ in $L^2(D)$, there
  is $C > 0$ such that
  \begin{align}
    \| \phi_\delta \|_{L^2({\Omega^\delta})} \| \phi_\delta 
    \|_{L^2({\Omega^\delta} \backslash \Omega^\delta_{2\delta})} 
    \leq \| \phi_\delta
    \|_{L^2({\Omega^\delta} \backslash (D \times (0, \delta)))}^2 
    + C \delta^{1/2}  \| \phi_\delta
    \|_{L^2({\Omega^\delta} \backslash (D \times (0, \delta)))} \notag \\
    + C \delta  ( \|
    \bar\phi_\delta \|_{L^2(D \backslash D_{2 \delta})} + \delta),
  \end{align}
  for all $\delta$ sufficiently small. Thus, by Cauchy-Schwarz
  inequality we have
  \begin{align}
    \label{L2L2}
    \| \phi_\delta \|_{L^2({\Omega^\delta})} \| \phi_\delta 
    \|_{L^2({\Omega^\delta} \backslash \Omega^\delta_{2\delta})} 
    & \leq C \delta ( \| \phi_\delta
      \|_{L^4({\Omega^\delta} \backslash (D \times (0, \delta)))}^2 \notag \\
    & + \| \phi_\delta
      \|_{L^4({\Omega^\delta} \backslash (D \times (0, \delta)))} + \|
      \bar\phi_\delta \|_{L^2(D \backslash D_{2 \delta})} + \delta),
  \end{align}
  for some $C > 0$ and all $\delta$ small enough.

  On the other hand, for $\delta$ sufficiently small depending only on
  $D$ we have
  \begin{align}
    \label{W4}
    \frac14 \int_{{\Omega^\delta} \backslash ( D \times (0, \delta))} \left( 1 -
    \phi_\delta^2 \right)^2 d^3 r \geq \int_{{\Omega^\delta} \backslash ( D
    \times (0, \delta))} \left( \frac18 \phi_\delta^4 - 1 \right) d^3
    r \notag \\
    \geq - 2 |\partial D| \delta^2 + \frac18 \| \phi_\delta
    \|_{L^4({\Omega^\delta} \backslash ( D
    \times (0, \delta)))}^4. 
  \end{align}
  Combining this estimate with \eqref{L2L2} and \eqref{E2E2}, we then
  get
  \begin{align}
    \mathcal E_\delta (\phi_\delta) - E_\delta(\bar \phi_\delta)
    \delta 
    & \geq \frac18 \| \phi_\delta
      \|_{L^4({\Omega^\delta} \backslash ( D
      \times (0, \delta)))}^4 - C \delta (\| \phi_\delta
      \|_{L^4({\Omega^\delta} \backslash (D \times (0, \delta)))}^2 \notag \\
    & + \| \phi_\delta
      \|_{L^4({\Omega^\delta} \backslash (D \times (0, \delta)))} + \|
      \bar\phi_\delta \|_{L^2(D \backslash D_{2 \delta})} + \delta)
      \notag \\
    & \geq - C' \delta^{4/3} - C \delta \|
      \bar\phi_\delta \|_{L^2(D \backslash D_{2 \delta})},
  \end{align}
  for some $C, C' > 0$ and all $\delta$ small enough. The lower bound
  in \eqref{liminfEg} then follows from Proposition \ref{p:EdE0} and
  the fact that
  $\| \bar\phi_\delta \|_{L^2(D \backslash D_{2 \delta})} \to 0$ as
  $\delta \to 0$. The latter is an immediate consequence of the strong
  convergence of $\bar\phi_\delta$ to $\bar \phi$ in $L^2(D)$.

  For the upper bound, we use the same construction as in the proof of
  Theorem \ref{t:main}. Let $\phi_\delta \in H^1(\Omega^\delta)$ be
  the function obtained from a given $\bar \phi \in H^1(D)$ in this
  way. Note that by construction we have 
  \begin{align}
    \| \phi_\delta \|_{L^2(\Omega^\delta \backslash (D \times (0,
    \delta)))}^2 
    & \leq 2 \delta \| \bar\phi \|_{L^2(D \backslash
      D_\delta)}^2, \\  
   \| \phi_\delta \|_{L^4(\Omega^\delta \backslash (D \times (0,
    \delta)))}^4 
    & \leq 2 \delta \| \bar\phi \|_{L^4(D \backslash
      D_\delta)}^4, \\  
    \| \nabla \phi_\delta
    \|_{L^2(\Omega^\delta \backslash (D \times (0, 
    \delta)))}^2 
    & \leq 2 \delta \| \nabla \bar\phi \|_{L^2(D \backslash
      D_\delta)}^2, 
  \end{align}
  for all $\delta$ sufficiently small depending only on $D$. In
  particular, we have
  $\| \nabla \phi_\delta \|_{L^2({\Omega^\delta})}^2 \leq C \delta$
  for some $C > 0$ independent of $\delta$. By Lemmas \ref{l:edge} and
  \ref{l:ded}, we get
  \begin{align}
    (1 - 2 \alpha \delta^2) \mathcal E_\delta(\phi_\delta) 
    & \leq E_\delta(\bar \phi) \delta + 2 \gamma
      \| \phi_\delta \|_{L^2({\Omega^\delta})} \| \phi_\delta 
      \|_{L^2({\Omega^\delta} \backslash \Omega^\delta_{2\delta})} + \gamma \alpha
      \delta^2 \| \phi \|_{L^2(\Omega^\delta_\delta)}^2 \notag \\ 
    & + \int_{{\Omega^\delta} \backslash (D \times (0, \delta))} \left( \frac12
      |\nabla \phi_\delta |^2 + \frac14 ( 1 - \phi_\delta^2)^2 \right)
      d^3 r.
  \end{align}
  Therefore, for $\delta$ sufficiently small depending only on $D$ we
  obtain
  \begin{align}
    \label{E2E2upp}
    (1 - 2 \alpha \delta^2) \mathcal E_\delta(\phi_\delta) 
    \leq E_\delta(\bar \phi) \delta + 6 \gamma \delta
    \| \bar\phi \|_{L^2(D)} \| \bar \phi 
    \|_{L^2(D \backslash D_{2\delta})} + \gamma \alpha
    \delta^3 \| \bar \phi \|_{L^2(D)}^2 \notag \\ 
    + \delta \int_{D \backslash D_\delta} \left( 
    |\nabla \bar \phi |^2 +  1 + \bar\phi^4 \right)
    d^2 r.
  \end{align}
  Note that the integral in the right-hand side of \eqref{E2E2upp}
  vanishes as $\delta \to 0$, since
  $\bar \phi \in H^1(D) \subset L^4(D)$ by Sobolev
  embedding. Similarly,
  $\| \bar\phi \|_{L^2(D \backslash D_{2 \delta})} \to 0$ as
  $\delta \to 0$. Thus, the estimate in \eqref{limsupEg} follows by
  Proposition \ref{p:EdE0}.
 \end{proof}

 \section{Rest of the proofs}
\label{sec:rest-proof}

We begin this section by presenting a brief demonstration of
Corollary \ref{c:nolteno}. Assume Theorem \ref{t:nolte} holds true. We
use $\bar\phi \equiv 1$ as an admissible test function for $E_*$ to
estimate the minimum energy from above. Then, if $\phi_\eps$ is a
minimizer of $\mathcal E_\eps$ and $\bar\phi_\eps$ is its $z$-average
given by \eqref{aveps}, by \eqref{Eepslb} and \eqref{Eepsub} we have
\begin{align}
  E_\eps(\bar \phi_\eps) \delta_\eps \leq \mathcal E_\eps(\phi_\eps) +
  O(\delta_\eps^2) \leq (1 - 2 \alpha
  \delta_\eps^2)^{-1} E_*(1) \delta_\eps + o(\delta_\eps),
\end{align}
as $\eps \to 0$. Thus, by the $\Gamma$-convergence of $E_\eps$ to
$E_*$ we get
\begin{align}
  \frac12 (\sigma_0 - \sigma_1 \lambda) \int_D |\nabla \bar\phi_\eps|
  \, d^2 r \to 0 \qquad \text{as} \quad \eps \to 0,
\end{align}
and in view of the fact that $\bar \phi_\eps = 1$ in
$D \backslash D_\rho$, we have $\bar \phi_\eps \to 1$ in $BV(D)$.

The proof of Theorem \ref{t:nolte} relies on a key interpolation lemma
that goes back to \cite{desimone06} and is generalized in
\cite{kmn17}, all in the periodic setting, to estimate the homogeneous
$H^{1/2}$ norm of $\bar \phi$ from above by the $L^\infty$ and the
$BV$ norms of $\bar \phi$. As was already pointed out in
\cite{desimone06}, this is impossible without an additional penalty
term due to the ``logarithmic failure'' of the corresponding embedding
\cite{desimone06}. Here we use the approach of \cite{kmn17} to extend
a version of the estimate in \cite[Lemma 4.1]{kmn17} to our setting,
noting that we need a nonlinear version of \cite[Lemma 4.1]{kmn17} in
order to combine it with the Modica-Mortola lower bound for the local
part of the energy.

\begin{lemma}
  \label{l:interp}
  Let $\bar\phi \in H^1(\R^2) \cap L^\infty(\R^2)$ be such that
  $\| \bar \phi\|_{L^\infty(\R^2)} \leq 1$ and
  $\mathrm{supp}(\bar\phi) \in B_R$. Then
  \begin{multline}
    \label{interp}
    {1 \over 4 \pi} \int_{\R^2} \int_{\R^2} {(\bar\phi(\mathbf r) -
      \bar\phi(\mathbf r'))^2 \over |\mathbf r - \mathbf r'|^3} \, d^2
    r \, d^2 r' \\
    \leq {3 \over \pi} \ln \left( {R \over r} \right) \| \nabla
    \left( \bar \phi - \tfrac13 \bar \phi^3 \right) \|_{L^1(\R^2)} +
    r \| \nabla \bar \phi \|_{L^2(\R^2)}^2 + \pi R,
  \end{multline}
  for any $r \in (0, R)$.
\end{lemma}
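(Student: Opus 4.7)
The plan is to split the double integral in \eqref{interp} according to the magnitude of $\mathbf{z} := \mathbf{r} - \mathbf{r}'$ into three natural regions---near-field $|\mathbf{z}| \leq r$, intermediate $r < |\mathbf{z}| \leq R$, and far field $|\mathbf{z}| > R$---which will produce, respectively, the three terms on the right-hand side. A crucial algebraic preliminary I would establish first is the pointwise inequality
\[
(a-b)^2 \;\leq\; 3\left|u(a)-u(b)\right|, \qquad u(t):=t-\tfrac{1}{3}t^3, \quad |a|,|b|\leq 1.
\]
Assuming WLOG $a \geq b$, this reduces to $a^2+ab+b^2+a-b\leq 3$ on $\{-1\leq b\leq a\leq 1\}$, which is easily verified by inspection on the boundary of the triangle (with equality at $(a,b)=(1,-1)$). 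Setting $v:=u(\bar\phi)=\bar\phi-\tfrac{1}{3}\bar\phi^3$, so that $\nabla v=(1-\bar\phi^2)\nabla\bar\phi$, $\|v\|_{L^\infty}\leq 2/3$ and $\mathrm{supp}\,v\subset B_R$, this lets me trade $(\bar\phi-\bar\phi')^2$ for $3|v-v'|$ wherever useful.

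For the near-field, I would use the Cauchy--Schwarz bound $(\bar\phi(\mathbf{r}+\mathbf{z})-\bar\phi(\mathbf{r}))^2 \leq |\mathbf{z}|^2\int_0^1|\nabla\bar\phi(\mathbf{r}+s\mathbf{z})|^2 ds$ together with Fubini and translation invariance, yielding $\int_{|\mathbf{z}|\leq r}|\mathbf{z}|^{-1}d^2z\cdot\|\nabla\bar\phi\|_{L^2}^2 = 2\pi r\|\nabla\bar\phi\|_{L^2}^2$, so after division by $4\pi$ the contribution is $\tfrac{r}{2}\|\nabla\bar\phi\|_{L^2}^2\leq r\|\nabla\bar\phi\|_{L^2}^2$ with room to spare. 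For the intermediate range, after invoking the algebraic inequality I would write $|v(\mathbf{r}+\mathbf{z})-v(\mathbf{r})|\leq\int_0^{|\mathbf{z}|}|\mathbf{e}\cdot\nabla v|(\mathbf{r}+t\mathbf{e})\,dt$ with $\mathbf{e}:=\mathbf{z}/|\mathbf{z}|$, then pass to polar $\mathbf{z}=\rho\mathbf{e}(\theta)$: the radial integration produces $\int_r^R\rho^{-1}d\rho=\ln(R/r)$, while the angular integration uses the identity $\int_0^{2\pi}|\mathbf{e}(\theta)\cdot\mathbf{w}|d\theta=4|\mathbf{w}|$ to recover $4\|\nabla v\|_{L^1}$. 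Combined, this gives $12\ln(R/r)\|\nabla v\|_{L^1}$, which becomes $\tfrac{3}{\pi}\ln(R/r)\|\nabla(\bar\phi-\tfrac{1}{3}\bar\phi^3)\|_{L^1}$ after the $(4\pi)^{-1}$ factor. For the far field, I would use $\|\bar\phi\|_{L^\infty}\leq 1$ and $\mathrm{supp}\,\bar\phi\subset B_R$ (so $\|\bar\phi\|_{L^2}^2\leq\pi R^2$) together with $\int_{|\mathbf{z}|>R}|\mathbf{z}|^{-3}d^2z=2\pi/R$ to obtain an $O(R)$ bound; a particularly clean route is to observe by Plancherel that the full integral equals $\int|\mathbf{k}||\widehat{\bar\phi}(\mathbf{k})|^2\,d^2k/(2\pi)^2$, whose low-frequency piece $|\mathbf{k}|\leq 1/R$ is bounded directly by $R^{-1}\|\bar\phi\|_{L^2}^2\leq\pi R$.

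The main obstacle will be getting the sharp constants. The factor $\tfrac{3}{\pi}$ in the logarithmic term critically requires the \emph{directional} form of the BV estimate combined with the averaging identity $\int_0^{2\pi}|\cos\theta|d\theta=4$; replacing this by the cruder radial bound $|\nabla v(\mathbf{r}+s\mathbf{z})|\leq|\nabla v|$ would lose a factor of $\pi/2$ and yield $\tfrac{3}{2}$ instead. The $\pi R$ constant in the far field is more delicate in pure physical space (where naive expansions of $(\bar\phi-\bar\phi')^2$ tend to produce $2\pi R$), and is most transparently obtained either from the low-frequency Fourier estimate above or by a careful accounting of the three cases (both, one, or neither of $\mathbf{r},\mathbf{r}'$ in $B_R$) exploiting the support geometry.
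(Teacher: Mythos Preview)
Your proof plan matches the paper's almost exactly: the same algebraic inequality $(a-b)^2\le 3|u(a)-u(b)|$ for $u(t)=t-\tfrac13 t^3$, the same Jensen estimate for the near field, and the same directional-derivative bound combined with $\int_0^{2\pi}|\cos\theta|\,d\theta=4$ to obtain the sharp coefficient $\tfrac{3}{\pi}$ on the logarithmic term.

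The one place where your execution is incomplete is the far field. Your Fourier suggestion does not splice into the physical-space decomposition (low frequency does not correspond to large $|\mathbf z|$, so the $\pi R$ bound on $\{|\mathbf k|\le 1/R\}$ cannot simply replace the $\{|\mathbf z|>R\}$ piece), and a case analysis based on $B_R$ does not quite reach $\pi R$ because pairs with both $\mathbf r,\mathbf r'\in B_R$ can have $|\mathbf z|\in(R,2R]$, forcing the crude bound $(\bar\phi-\bar\phi')^2\le 2\bar\phi^2+2\bar\phi'^2$ there. The paper sidesteps this by doing the ``support geometry'' split first and at radius $2R$: it writes the whole double integral as the piece over $B_{2R}\times B_{2R}$ plus $2\int_{B_R}\bar\phi^2(\mathbf r)\int_{\R^2\setminus B_{2R}}|\mathbf r-\mathbf r'|^{-3}\,d^2r'\,d^2r\le\tfrac{4\pi}{R}\|\bar\phi\|_{L^2}^2$ (using $|\mathbf r-\mathbf r'|\ge R$), which after the $(4\pi)^{-1}$ factor is exactly $\pi R$. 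The remaining $B_{2R}\times B_{2R}$ piece, after the substitution $\mathbf z=\mathbf r'-\mathbf r$, lives in $B_{2R}\times B_{4R}$ and is then split at $|\mathbf z|=4r$; the ratio $4R/4r=R/r$ is unchanged, recovering precisely your intermediate and near-field bounds.
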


\begin{proof}
  The proof is a close adaptation of the proof of \cite[Lemma
  4.1]{kmn17}. Write the integral in \eqref{interp} as
  \begin{multline}
    \int_{\R^2} \int_{\R^2} {(\bar\phi(\mathbf r) - \bar\phi(\mathbf
      r'))^2 \over |\mathbf r - \mathbf r'|^3} \, d^2 r \, d^2 r' =
    \int_{B_{2R}} \int_{B_{2R}} {(\bar\phi(\mathbf r) -
      \bar\phi(\mathbf r'))^2 \over |\mathbf r - \mathbf r'|^3} \, d^2
    r \, d^2 r' \\
    + 2 \int_{B_{2R}} \int_{\R^2 \backslash B_{2R}}
    {\bar\phi^2(\mathbf r) \over |\mathbf r - \mathbf r'|^3} \, d^2 r'
    \, d^2 r \leq \int_{B_{2R}} \int_{B_{4R}} {(\bar\phi(\mathbf r +
      \mathbf z) - \bar\phi(\mathbf r))^2 \over |\mathbf z|^3} \, d^2
    z \, d^2 r \\
    + 2 \int_{B_R} \int_{\R^2 \backslash B_R} {\bar\phi^2(\mathbf r)
      \over |\mathbf z|^3} \, d^2 z \, d^2 r \\
    = \int_{B_{2R}} \int_{B_{4R}} {(\bar\phi(\mathbf r + \mathbf z) -
      \bar\phi(\mathbf r))^2 \over |\mathbf z|^3} \, d^2 z \, d^2 r +
    {4 \pi \over R} \| \bar\phi \|_{L^2(\R^2)}^2. \label{B2R4R}
  \end{multline}
  Focusing now on the first term above, we observe that by Jensen's
  inequality we have for all $\mathbf z \in B_{4R}$:
  \begin{align}
    \int_{B_{2R}} (\bar\phi(\mathbf r + \mathbf z) -
    \bar\phi(\mathbf r))^2 d^2 r \leq \int_{B_{2R}} \int_0^1 |\mathbf
    z \cdot \nabla
    \bar \phi(\mathbf r + t \mathbf z) |^2 \, dt \, d^2 r 
    \leq \int_{B_{6R}} |\mathbf z \cdot \nabla \bar \phi(\mathbf r)|^2 
    \, d^2 r. \label{phi2}
  \end{align}
  Similarly, introducing
  $\bar\psi := \bar \phi - \frac13 \bar \phi^3$, we have
  \begin{multline}
    \int_{B_{2R}} (\bar\phi(\mathbf r + \mathbf z) -
    \bar\phi(\mathbf r))^2 d^2 r \\
    \leq \int_{B_{2R} \cap \{ \bar \phi(\mathbf r + \mathbf z) \not=
      \bar \phi(\mathbf r) \} } \ {(\bar \phi(\mathbf r + \mathbf z) -
      \bar \phi(\mathbf r))^2 \over |\bar \phi(\mathbf r + \mathbf z)
      - \frac13 \bar \phi^3(\mathbf r + \mathbf z) - \bar \phi(\mathbf
      r ) + \frac13 \bar \phi^3(\mathbf r) |} \int_0^1 |\mathbf z
    \cdot \nabla \bar \psi(\mathbf r + t
    \mathbf z) | \, dt \, d^2 r  \\
    \leq 3 \int_{B_{6R}} |\mathbf z \cdot \nabla \bar \psi(\mathbf r)|
    \, d^2 r, \label{phi1}
  \end{multline}
  where we used the fact that 
  \begin{align}
    \left| { (s - t)^2 \over s - \frac13 s^3 - t + \frac13 t^3}
    \right| \leq 3
    \qquad \forall (s, t) \in (-1,1)^2, \ s \not= t,
  \end{align}
  which can be readily verified by means of elementary
  calculus. Indeed, for every $-1 < s < t < 1$ we have
  \begin{align}
    F(s, t) :=  { (s - t)^2 \over s - \frac13 s^3 - t + \frac13 t^3} =
    {3 (s - t) \over 3 - t^2 - ts - s^2},
  \end{align}
  and taking partial derivatives, we obtain
  \begin{align}
    {\partial F \over \partial t} = - {9 (1 - s^2) + 3 (s - t)^2 \over
    (3 - s^2 - s t - t^2)^2} < 0, \qquad  {\partial F \over \partial
    s} =  {9 (1 - t^2) + 3 (s - t)^2 \over (3 - s^2 - s t - t^2)^2} > 0. 
  \end{align}
  Hence $0 > F(s, t) > F(-1, 1) = -3$ for all $-1 < s < t < 1$. Since
  $F(s, t) = -F(t, s)$, we conclude that $|F(s, t)| \leq 3$ for all
  $(s, t) \in (-1,1)^2$ with $s \not= t$.

  Now, splitting the integral over $\mathbf z$ in \eqref{B2R4R} into a
  near-field part and a far-field part and using \eqref{phi2} and
  \eqref{phi1} to estimate the respective pieces, we get for any
  $0 < r < R$:
  \begin{multline}
    \int_{B_{2R}} \int_{B_{4R}} {(\bar\phi(\mathbf r + \mathbf z) -
      \bar\phi(\mathbf r))^2 \over |\mathbf z|^3} \, d^2 z \, d^2 r \\
    \leq \int_{B_{4 r}} \int_{B_{6R}} {|\mathbf z \cdot \nabla \bar
      \phi (\mathbf r)|^2 \over |\mathbf z|^3} \, d^2 r \, d^2 z + 3
    \int_{B_{4R} \backslash B_{4 r}} \int_{B_{6R}} {|\mathbf z
      \cdot \nabla \bar
      \psi (\mathbf r)| \over |\mathbf z|^3} \, d^2 r \, d^2 z  \\
    \leq 4 \pi r \| \nabla \bar \phi \|_{L^2(\R^2)}^2 + 12 \ln
    \left( { R \over r} \right) \| \nabla \bar \psi \|_{L^1(\R^2)}.
  \end{multline}
  Then, combining this estimate with \eqref{B2R4R}, we obtain the
  result.
\end{proof}

We point out that, importantly, the constant in front of the logarithm
in Lemma \ref{l:interp} is the best possible one (as was already
observed in \cite{desimone06,kmn17} in a slightly different setting),
which can be easily seen by considering the characteristic function of
$B_{R/2}$ mollified at scale $r$ as a test function, provided that $r$
is small enough.

We will also need a slightly modified version of Lemma
\ref{l:interp}. 

\begin{lemma}
  \label{l:interp2}
  Let $\bar \phi \in L^\infty(\R^2) \cap H^1_{loc}(\R^2)$ be such that
  $\bar \phi = 1$ in $\R^2 \backslash D$ and
  $\| \bar \phi \|_{L^\infty(\R^2)} = 1$. Then
  \begin{multline}
    \label{interp2}
    {1 \over 4 \pi} \int_{\R^2} \int_{\R^2} {(\bar\phi(\mathbf r) -
      \bar\phi(\mathbf r'))^2 \over |\mathbf r - \mathbf r'|^3} \, d^2
    r \, d^2 r' \\
    \leq {3 \over \pi} \ln \left( {R \over r} \right) \| \nabla
    \left( \bar \phi - \tfrac13 \bar \phi^3 \right) \|_{L^1(\R^2)} +
    r \| \nabla \bar \phi \|_{L^2(\R^2)}^2 + 4 \pi R,
  \end{multline}
  for some $R > 0$ and all $r \in (0, R)$.
\end{lemma}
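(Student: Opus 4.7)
The plan is to adapt the proof of Lemma \ref{l:interp} to the boundary condition $\bar\phi \equiv 1$ outside $D$, exploiting the fact that $\bar\phi - 1$ has compact support in $\bar D$ with $|\bar\phi - 1| \le 2$. The key observations carry over: since $|\bar\phi| \le 1$ pointwise, the algebraic inequality underlying \eqref{phi1},
\[
  (s-t)^2 \le 3 \left| s - \tfrac13 s^3 - t + \tfrac13 t^3 \right| \qquad \forall\,(s,t) \in [-1,1]^2,
\]
remains valid on the range of $\bar\phi$; and since $\nabla\bar\phi = \nabla(\bar\phi-1)$ and $(1-\bar\phi^2)\nabla\bar\phi$ are both supported in $\bar D$, they belong to $L^2(\R^2)$ and $L^1(\R^2)$, respectively, by Cauchy-Schwarz.

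First I would pick $R > 0$ large enough that $\bar D \subset B_R$ and decompose
\[
  \int_{\R^2}\!\int_{\R^2} = \int_{B_{2R}}\!\int_{B_{2R}} + 2\int_{B_{2R}}\!\int_{\R^2 \setminus B_{2R}} + \int_{\R^2 \setminus B_{2R}}\!\int_{\R^2 \setminus B_{2R}}.
\]
The last integral vanishes because $\bar\phi \equiv 1$ on both factors. For the cross term, $\bar\phi(\mathbf r') = 1$ when $\mathbf r' \notin B_{2R}$, so the integrand is $(\bar\phi(\mathbf r)-1)^2/|\mathbf r - \mathbf r'|^3$, which vanishes for $\mathbf r \in B_{2R} \setminus B_R$; combining the elementary estimate $\int_{\R^2 \setminus B_{2R}} |\mathbf r - \mathbf r'|^{-3}\, d^2 r' \le 2\pi/R$ for $\mathbf r \in B_R$ with $(\bar\phi - 1)^2 \le 4$ and $|B_R| = \pi R^2$ bounds this contribution by $16\pi^2 R$, yielding the $4\pi R$ term in \eqref{interp2} after division by $4\pi$.

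For the near-field part, I would substitute $\mathbf r' = \mathbf r + \mathbf z$ to obtain an integral over $\mathbf r \in B_{2R}$, $\mathbf z \in B_{4R}$, split the $\mathbf z$-integration at $|\mathbf z| = 4r$, and apply the linear $L^2$ bound \eqref{phi2} on $\{|\mathbf z| \le 4r\}$ together with the nonlinear $L^1$ bound \eqref{phi1} on $\{4r < |\mathbf z| \le 4R\}$, exactly as in the proof of Lemma \ref{l:interp}; these produce the $r\,\|\nabla\bar\phi\|_{L^2(\R^2)}^2$ and $\tfrac{3}{\pi}\ln(R/r)\,\|\nabla(\bar\phi - \tfrac13 \bar\phi^3)\|_{L^1(\R^2)}$ terms. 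No genuinely new obstacle arises; the only change from Lemma \ref{l:interp} is that subtracting the non-trivial boundary value $1$ multiplies the relevant $L^\infty$ bound by $2$, which quadruples the far-field contribution and accounts precisely for the passage from $\pi R$ to $4\pi R$ in \eqref{interp2}.
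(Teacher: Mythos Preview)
Your proposal is correct and follows exactly the approach the paper intends: the paper simply states that the proof of Lemma~\ref{l:interp2} is identical to that of Lemma~\ref{l:interp}, and your argument spells this out, correctly tracking how replacing $\bar\phi$ by $\bar\phi-1$ (supported in $\bar D\subset B_R$, with $|\bar\phi-1|\le 2$) quadruples the far-field contribution from $\pi R$ to $4\pi R$ while leaving the near-field estimates \eqref{phi2} and \eqref{phi1} unchanged.
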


\noindent The proof of Lemma \ref{l:interp2} is identical to that of
Lemma \ref{l:interp}. We note that the left-hand side in
\eqref{interp2} makes sense because $\bar \phi - 1 \in H^1(\R^2)$ and
can be interpreted as the homogeneous $H^{1/2}$ norm squared of
$\bar \phi - 1$.

\begin{proof}[Proof of Theorem \ref{t:nolte}]
  We begin with the proof of compactness. Let $\bar \phi_\eps$ be as
  in part (i) of Theorem \ref{t:nolte} and define
  $\bar \psi_\eps := \bar \phi_\eps - \frac13 \bar \phi_\eps^3$. Using
  the Modica-Mortola trick \cite{modica87} and weak chain rule
  \cite{evans}, we write for all $\eps$ sufficiently small
  \begin{align}
    E_\eps(\bar \phi_\eps) 
    & \geq {(\lambda_c + \lambda) \sqrt{1 -
      \alpha \delta_\eps^2} \over 2 \lambda_c
      \sqrt{2}} \int_D  |\nabla \bar
      \psi_\eps |  \, d^2 r \notag \\
    & + {\eps (\lambda_c - \lambda) (1 - \alpha \delta_\eps^2) \over 4
      \lambda_c} 
      \int_D |\nabla \bar \phi_\eps|^2 \, d^2 r + {\lambda_c -
      \lambda \over 8 \eps \lambda_c} 
      \int_D (1 - \bar \phi_\eps^2)^2 \, d^2 r \notag \\
    & - {\lambda \over 16 \pi |\ln \eps|} \int_{\R^2} \int_{\R^2} {
      (\chi_{\eps \delta_\eps} (\mathbf r) \bar \phi_\eps(\mathbf r) -
      \chi_{\eps \delta_\eps} (\mathbf r') \bar \phi_\eps(\mathbf r') )^2
      \over |\mathbf r - \mathbf r'|^3} \, d^2 r \, d^2 r'. 
  \end{align}  
  Applying Lemma \ref{l:interp} with $r = \eps \delta_\eps$, we,
  therefore, get
  \begin{multline}
    E_\eps(\bar \phi_\eps) + C \| \nabla \chi_{\eps \delta_\eps}
    \|_{L^1(D)} + C \eps \delta_\eps^2 \| \nabla \chi_{\eps
      \delta_\eps}
    \|_{L^2(D)}^2 + C \delta_\eps  \\
    \geq \frac14 (\sigma_0 - \lambda \sigma_1) \int_D |\nabla
    \bar \psi_\eps | \, d^2 r + {\lambda_c - \lambda \over 8 \eps
      \lambda_c} \int_D (1 - \bar \phi_\eps^2)^2 \, d^2 r, \label{mmrough}
  \end{multline}
  for some $C > 0$ and all $\eps$ small enough.  Since the left-hand
  side of the above expression is bounded as $\eps \to 0$, we obtain,
  upon extraction of a subsequence, that $|\bar \phi_\eps| \to 1$ in
  $L^1(D)$ and a.e. in $D$. Furthermore, by compactness in $BV$
  \cite{evans} , we have, upon extraction of another subsequence, that
  $\bar \psi_\eps \rightharpoonup \bar \psi$ in $BV(D)$, and
  $\bar \psi_\eps \to \bar \psi$ in $L^1(D)$ and a.e. in $D$, with
  $|\bar \psi| = \frac23$ a.e. in $D$. Thus, we get that
  $\bar \psi \in BV(D; \{ -\frac23, \frac23 \})$, which, in turn,
  implies that $\bar \phi_\eps \to \bar \phi $ in $L^1(D)$ with
  $\bar\phi = \frac32 \bar \psi \in BV(D; \{-1,1\})$. Also, clearly
  $\bar \phi = \bar \phi_\eps = 1$ in $D \backslash D_\rho$.

  We now prove the lower bound in \eqref{liminfEeps}. To that end, we
  make the estimate in \eqref{mmrough} quantitative by isolating the
  contribution of the edge to the non-local energy. We redefine
  $\bar \phi(x) := 1$ for all $x \in \R^2 \backslash D$ and introduce
  \begin{align}
    E_\eps^0(\bar \phi) := \int_D \left( \frac{\eps}{2} \left( 1 -
    \alpha \delta^2 \right) |\nabla \bar\phi|^2 + \frac{1}{4 \eps}
    \left( 1 - \bar\phi^2 \right)^2 \right) d^2 r 
    \qquad \qquad \notag \\ 
    - {\lambda \over 16 \pi |\ln \eps|} \int_{\R^2} \int_{\R^2}
    {(\bar\phi(\mathbf r) -
    \bar\phi(\mathbf r'))^2 \over |\mathbf r
    - \mathbf r'|^3} \, d^2 r \, d^2 r',
    \label{EE0eps}
  \end{align}
  which represents the energy $E_\eps$ without the contribution of the
  edges.  Then, since by our assumption $\bar\phi_\eps = 1$ in
  $\R^2 \backslash D_\rho$, we have
  $\chi_{\eps \delta_\eps} \bar \phi_\eps = \bar \phi_\eps - 1 +
  \chi_{\eps \delta_\eps}$
  for all $\eps$ sufficiently small and, therefore,
  \begin{multline}
    E_\eps(\bar \phi_\eps) = E_\eps^0(\bar \phi_\eps) - {\lambda \over
      16 \pi |\ln \eps|} \int_{\R^2} \int_{\R^2} {(\chi_{\eps
        \delta_\eps}(\mathbf r) - \chi_{\eps \delta_\eps}(\mathbf
      r'))^2 \over |\mathbf r -
      \mathbf r'|^3} \, d^2 r \, d^2 r' \\
    - {\lambda \over 8 \pi |\ln \eps|} \int_{\R^2} \int_{\R^2} {(\bar
      \phi_\eps(\mathbf r) - \bar \phi_\eps(\mathbf r')) (\chi_{\eps
        \delta_\eps}(\mathbf r) - \chi_{\eps \delta_\eps}(\mathbf r'))
      \over |\mathbf r - \mathbf r'|^3} \, d^2 r \, d^2 r'.
    \label{EE0epsbelow}
  \end{multline}
  Using Lemma \ref{l:interp2} and arguing as in \eqref{mmrough}, we
  can estimate
  \begin{align}
    E_\eps^0(\bar \phi_\eps) \geq \frac34 ( \sigma_0 - \lambda \sigma_1)
    \int_D |\nabla \bar \psi_\eps| \, d^2 r - {C \over
    |\ln \eps|},
  \end{align}
  for some $C > 0$ and all $\eps$ small enough. At the same time, by a
  direct computation as in the proof of \cite[Lemma 5.3]{kmn17} we
  have
  \begin{align}
    \int_{\R^2} \int_{\R^2} {(\chi_{\eps
    \delta_\eps}(\mathbf r) - \chi_{\eps \delta_\eps}(\mathbf
    r'))^2 \over |\mathbf r - \mathbf r'|^3} \, d^2 r \, d^2 r' \leq
    4 \, |\partial D| \, |\ln \eps| + C \ln |\ln \eps|,
  \end{align}
  for some $C > 0$ and $\eps$ small enough. Finally, we estimate the
  integral involving the mixed term in \eqref{EE0epsbelow} for all
  $\eps$ so small that $\chi_{\eps \delta_\eps} = 1$ in $D_{\rho/2}$: 
  \begin{multline}
    \int_{\R^2} \int_{\R^2} {(\bar \phi_\eps(\mathbf r) - \bar
      \phi_\eps(\mathbf r')) (\chi_{\eps \delta_\eps}(\mathbf r) -
      \chi_{\eps \delta_\eps}(\mathbf r')) \over |\mathbf r - \mathbf
      r'|^3} \, d^2 r \, d^2 r' \\
    = 2 \int_{D_\rho} \int_{\R^2 \backslash D_{\rho/2}} {(\bar
      \phi_\eps (\mathbf r) - 1) (1 - \chi_{\eps \delta_\eps}(\mathbf
      r')) \over
      |\mathbf r - \mathbf r'|^3} \, d^2 r' \, d^2 r  \\
    \leq 8 \int_{D_\rho} \left( \int_{\R^2 \backslash
        B_{\rho/2}(\mathbf r)} {1 \over |\mathbf r - \mathbf r'|^3} \,
      d^2 r' \right) d^2 r \leq {32 \pi |D| \over \rho}.
  \end{multline}
  Putting all these estimates together, we then obtain
  \begin{align}
    E_\eps(\bar \phi_\eps) \geq \frac34 ( \sigma_0 - \lambda
    \sigma_1 )
    \int_D |\nabla \bar \psi_\eps| \, d^2 r - {\lambda \sigma_1 \over
    4}  |\partial D| - C \, {\ln |\ln \eps| \over |\ln \eps|},
  \end{align}
  for some $C > 0$ and all $\eps$ small enough. The proof is concluded
  from the lower semicontinuity of the total variation \cite{evans}
  and the fact that $\bar \phi = \frac32 \bar \psi$. 

  Finally, the upper bound in \eqref{limsupEeps} follows from the
  standard construction of the recovery sequence for the
  Ginzburg-Landau energy exactly as in \cite[Lemma 5.3]{kmn17}
\end{proof}

\paragraph{Acknowledgements}
This work was supported, in part, by NSF via grants DMS-1313687 and
DMS-1614948. The author would like to thank V. Slastikov for many
valuable comments.

\bibliographystyle{plain}

\bibliography{../nonlin,../stat,../mura}

\end{document}